\theoremstyle{definition}
\newtheorem{theorem}{Theorem}[section]
\newtheorem{definition}[theorem]{Definition}
\newtheorem{conjecture}[theorem]{Conjecture}
\newtheorem{lemma}[theorem]{Lemma}
\newtheorem{proposition}[theorem]{Proposition}
\newtheorem{corollary}[theorem]{Corollary}
\newtheorem{question}[theorem]{Question}
\theoremstyle{remark}
\newtheorem{remark}[theorem]{Remark}
\newtheorem{example}[theorem]{Example}
\def\PP{\mathbb{P}}
\def\RR{\mathbb{R}}
\def\CC{\mathbb{C}}
\def\ZZ{\mathbb{Z}}
\def\QQ{\mathbb{Q}}
\def\kk{\mathbb{K}}
\def\T2{\mathbb{T}_2}
\def\Mat{\text{Mat}}
\def\H{\mathrm{H}}
\def\N{\mathrm{N}}
\def\MV{\text{MV}}
\DeclareMathOperator{\rk}{rank}
\DeclareMathOperator{\NS}{NS}
\DeclareMathOperator{\Corr}{Corr}
\DeclareMathOperator{\Nef}{Nef}
\DeclareMathOperator{\Eff}{Eff}
\newcommand{\medwedge}{\mathord{\adjustbox{valign=B,totalheight=.45\baselineskip}{$\bigwedge$}}}
\begin{document}

\title{
Realizations of homology classes and projection areas  
}

\author{Daoji Huang}
\address{Institute for Advanced Study and University of Massachusetts Amherst}
\email{daojihuang@umass.edu}

\author{June Huh}
\address{Princeton University and Korea Institute for Advanced Study
}
\email{huh@princeton.edu}

\author{Mateusz Micha\l ek}
\address{
University of Konstanz
}
\email{mateusz.michalek@uni-konstanz.de}

\author{Botong Wang}
\address{University of Wisconsin–Madison
}
\email{wang@math.wisc.edu}

\author{Shouda Wang}
\address{Princeton University}
\email{shoudawang@princeton.edu}

\begin{abstract}
The relationship between convex geometry and algebraic geometry has deep historical roots, tracing back to classical works in enumerative geometry.
In this paper, we continue this theme by studying two interconnected problems regarding projections of geometric objects in four-dimensional spaces:
\begin{enumerate}[(1)]
\item Let $A$ be a convex body in $\mathbb{R}^4$, and let $(p_{12}, p_{13}, p_{14}, p_{23}, p_{24}, p_{34})$ be the areas of the six coordinate projections of $A$ in $\mathbb{R}^2$. Which tuples of six  nonnegative real numbers can arise in this way?
\item Let $S$ be an irreducible surface in $(\mathbb{P}^1)^4$, and let $(p_{12}, p_{13}, p_{14}, p_{23}, p_{24}, p_{34})$ be the degrees of the six coordinate projections from $S$ to $(\mathbb{P}^1)^2$. Which tuples of six nonnegative integers can arise in this way?
\end{enumerate}
We show that these questions are governed by the Pl\"ucker relations for the Grassmannian $\text{Gr}(2,4)$ over the triangular hyperfield $\mathbb{T}_2$. We extend our analysis by determining
 the homology classes in $(\mathbb{P}^m)^n$  proportional to the fundamental classes of irreducible algebraic surfaces, resolving the algebraic Steenrod problem in this setting.
Our results lead to several conjectures on realizable homology classes in smooth projective varieties and on the projection volumes of convex bodies.
\end{abstract}

\maketitle

\section{Introduction}

The connection between convex geometry and intersection theory traces back to classical results of Newton \cite{Newton} and Minding \cite{Minding} that relate the solutions of polynomial systems to their Newton polyhedra.\footnote{For historical overviews and commentaries on Newton's and Minding's work from a modern perspective, see \cite[Essay 4.4]{Edwards} and \cite[Section 2]{Cox-Rojas}.}
In this paper, we expand upon this classical theme by addressing two closely related problems: one concerning projection areas of convex bodies in $\mathbb{R}^4$, and the other involving homology classes of algebraic surfaces in $(\mathbb{P}^1)^4$, drawing inspiration from both convex and algebraic geometry.

\begin{question}
\label{q:convexbody}
Let $A$ be a convex body in $\mathbb{R}^4$, and let $\mathbf{p}=(p_{12}, p_{13}, p_{14}, p_{23}, p_{24}, p_{34})$ be the areas of the six coordinate projections of $A$ in $\mathbb{R}^2$. Which tuples of six  nonnegative real numbers can arise in this way?
\end{question}

\begin{question}
\label{q:realizablity}
Let $S$ be an irreducible surface in $(\mathbb{P}^1)^4$, and let $\mathbf{p}=(p_{12}, p_{13}, p_{14}, p_{23}, p_{24}, p_{34})$ be the degrees of the six coordinate projections from $S$ to $(\mathbb{P}^1)^2$. Which tuples of six nonnegative integers can arise in this way?
\end{question}

 
 We answer Question~\ref{q:convexbody} 
 in Theorem~\ref{thm:convexbody}, and Question~\ref{q:realizablity} modulo positive multiples in Theorem~\ref{thm:mainQ}. Based on our findings, we formulate a possible answer to the $n$-dimensional analogue of Question~\ref{q:convexbody} in Conjecture~\ref{con:realizabilitybypolytopes}, and provide an extension of Theorem~\ref{thm:mainQ} from $(\mathbb{P}^1)^4$ to $(\mathbb{P}^n)^m$ in Theorem~\ref{thm:nondegenerateSteenrod}.

Questions ~\ref{q:convexbody} and ~\ref{q:realizablity} find their solutions encoded in the Pl\"ucker relations for the Grassmannian $\text{Gr}(2,4)$ over a triangular hyperfield $\mathbb{T}_2$.
For general discussions of Pl\"ucker relations over triangular hyperfields and other algebraic objects, we refer to \cite{Baker-Bowler,BHKL}. For our purposes, the following explicit definition will suffice.

\begin{definition}
Let $\Delta(\mathbb{T}_2)$ be the set of $(p_{12},p_{13},p_{14},p_{23},p_{24},p_{34}) \in \medwedge^2\,\RR^4$  such that
\[
p_{ij} \ge 0 \ \ \text{and} \ \ 
\sqrt{p_{ij}p_{kl}}+ \sqrt{p_{ik}p_{jl}} \ge \sqrt{p_{il}p_{jk}} \ \  \text{for any $i,j,k,l$.}
\]
\end{definition}

The set $\Delta(\mathbb{T}_2)$ is a non-convex semialgebraic set 
whose image in the projective space of $\medwedge^2\, \RR^4$ is homeomorphic to the $5$-dimensional closed ball \cite{BHKL}.
This image is the set  of $\mathbb{T}_2$-valued points in the Grassmannian $\text{Gr}(2,4)$ in the sense of \cite{Baker-Lorscheid}, so one may view  $\Delta(\mathbb{T}_2)$ as the set of $\mathbb{T}_2$-valued points in the affine cone over the Grassmannian.
The points in the interior $\Delta(\mathbb{T}_2)^\circ$ are called \emph{nondegenerate}. These are the sextuples of positive numbers such that the square roots  of $p_{12}p_{34}$, $p_{13}p_{24}$, $p_{14}p_{23}$ form the side lengths of a nondegenerate triangle. 



We answer Question~\ref{q:convexbody}. Let $\pi_{ij}$ be the coordinate projection of $\mathbb{R}^4$ onto the plane orthogonal to the standard basis vectors $\mathbf{e}_i$ and $\mathbf{e}_j$.

\begin{theorem}\label{thm:convexbody}
The following holds for any $\mathbf{p}=(p_{12},p_{13},p_{14},p_{23},p_{24},p_{34}) \in \medwedge^2\,\RR^4$.
\begin{enumerate}[(1)]\itemsep 5pt
\item $\mathbf{p} \in \Delta(\T2)^\circ$ if and only if there is a smooth convex body $A\subseteq \mathbb{R}^4$ that satisfies
\[
p_{ij}=\Big(\text{the area of the projection $\pi_{ij}(A)$}\Big) \ \ \text{for all $i< j$.}
\]
\item $\mathbf{p} \in \Delta(\T2)$ if and only if
 there is a convex body $A\subseteq \mathbb{R}^4$ that satisfies
\[
p_{ij}=\Big(\text{the area of the projection $\pi_{ij}(A)$}\Big) \ \ \text{for all $i< j$.}
\]
\end{enumerate}
\end{theorem}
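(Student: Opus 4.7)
The plan is to separate Theorem~\ref{thm:convexbody} into a necessity direction (every convex body's projection-area tuple lies in $\Delta(\mathbb{T}_2)$, and in $\Delta(\mathbb{T}_2)^\circ$ if the body is smooth) and a sufficiency direction (every point of $\Delta(\mathbb{T}_2)$ is realized), and to attack these by different tools: mixed-volume inequalities for necessity, and an explicit construction together with a limiting argument for sufficiency.

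For necessity, I would first identify the projection areas with mixed volumes. Writing $I_r = [0, e_r]$ for the unit segment in direction $e_r$, the polynomial expansion of $\mathrm{vol}_4(A + tI_i + sI_j)$ in $t, s$ retains only three nonvanishing terms (because $V(\,\cdot\,, I_r, I_r, \,\cdot\,) = 0$ when $I_r$ is one-dimensional), and comparing the coefficient of $ts$ with the asymptotic $\mathrm{vol}_4(A + tI_i + sI_j) \sim ts \cdot p_{ij}$ yields $p_{ij} = 12\, V(A, A, I_i, I_j)$. I would then apply the Alexandrov--Fenchel inequality
\[
V(A, A, K, L)^2 \ge V(A, A, K, K)\, V(A, A, L, L)
\]
to $K = \alpha I_i + \beta I_j$ and $L = \gamma I_k + \delta I_l$ with free parameters $\alpha, \beta, \gamma, \delta > 0$. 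After expansion (using $V(I_r, I_r, A, A) = 0$), this reads
\[
\bigl(\alpha\gamma\, p_{ik} + \alpha\delta\, p_{il} + \beta\gamma\, p_{jk} + \beta\delta\, p_{jl}\bigr)^2 \ge 4\alpha\beta\gamma\delta\, p_{ij}\, p_{kl}.
\]
Specializing so that the four summands on the left pair up as $\sqrt{p_{ik}p_{jl}}$ and $\sqrt{p_{il}p_{jk}}$—for instance, $\alpha\gamma = \sqrt{p_{jl}/p_{ik}}$, $\beta\delta = \sqrt{p_{ik}/p_{jl}}$, $\alpha\delta = \sqrt{p_{jk}/p_{il}}$, $\beta\gamma = \sqrt{p_{il}/p_{jk}}$, which automatically satisfy $\alpha\beta\gamma\delta = 1$—yields the triangle inequality $\sqrt{p_{ik}p_{jl}} + \sqrt{p_{il}p_{jk}} \ge \sqrt{p_{ij}p_{kl}}$. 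Permuting the roles of $\{i,j,k,l\}$ produces all three triangle inequalities, so $\mathbf{p} \in \Delta(\mathbb{T}_2)$; for smooth strictly convex $A$, Alexandrov's classical equality characterization in Alexandrov--Fenchel rules out equality for the rectangles $K, L$, upgrading the conclusion to $\mathbf{p} \in \Delta(\mathbb{T}_2)^\circ$.

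For sufficiency, I would realize every interior point by a smooth convex body and then obtain boundary points of $\Delta(\mathbb{T}_2)$ as Hausdorff limits, using continuity of the projection-area map in the Hausdorff metric. A first candidate family for interior points is the ellipsoids $E_M = \{x : x^T M x \le 1\}$, for which a direct computation gives
\[
p_{ij}(E_M) = \pi \sqrt{M_{\{i,j\},\{i,j\}}/\det M},
\]
so that the interior sufficiency reduces to prescribing, up to a common scalar, the $2 \times 2$ principal minors of a $4 \times 4$ positive-definite matrix—equivalently, to lifting a $\mathbb{T}_2$-valued point of $\text{Gr}(2,4)$ to an $\mathbb{R}$-valued point in the sense of \cite{Baker-Bowler, BHKL}. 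The hard step will be proving that this lift always exists: a short Jacobian calculation at $M = I$ shows that the differential of $M \mapsto \mathbf{p}(E_M)$ has rank only $4$ in the $6$-dimensional target, so ellipsoids alone do not locally cover $\Delta(\mathbb{T}_2)^\circ$ near the round ball. To reach all interior points one will likely need a broader family (for example, Minkowski sums of several ellipsoids, or smoothings of zonotopes and polytopes) together with a global surjectivity argument (e.g.\ showing that the image is both open and closed in $\Delta(\mathbb{T}_2)^\circ$), or else a transfer of the algebraic realization from Theorem~\ref{thm:mainQ} via Newton polytopes or Newton--Okounkov bodies.
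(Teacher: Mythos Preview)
Your necessity direction is correct and is a pleasant, more direct route than the paper's. The paper first shows that the matrix $L(\mathbf{p})$ is Lorentzian (via the Hodge index theorem applied to toric surfaces, Corollary~\ref{cor:projectionvolumes}) and then unpacks the determinant of $L(\mathbf{p})$ to recover the triangle inequalities (Proposition~\ref{prop:T2}); you bypass this by applying Alexandrov--Fenchel directly to $K=\alpha I_i+\beta I_j$, $L=\gamma I_k+\delta I_l$ with a well-chosen specialization. For the strict inequality when $A$ is smooth, be careful: the equality characterization the paper uses is Shenfeld--van Handel's recent Theorem~\ref{thm:equalityAF} (confirming Schneider's conjecture for Minkowski's quadratic inequality), not a classical result. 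For $C^2_+$ bodies the older theory may already suffice, but you should check rather than assert.

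The sufficiency direction has two genuine gaps.

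First, you have no construction for interior points---you correctly observe that ellipsoids fail and then leave it open. The paper's method is explicit: the action of $\RR_{>0}\times\RR_{>0}^4$ on $\medwedge^2\RR^4$ carries every interior $\mathbf{p}$ to a symmetric point $(q_{12},q_{13},q_{14},q_{14},q_{13},q_{12})$ with $q_{ij}=\sqrt{p_{ij}p_{kl}}$ (Lemma~\ref{lem:symmetrization}), and every such point is realized by a zonotope $A=xD_{12|34}+yD_{13|24}+zD_{14|23}$, where $D_{ab|cd}=\text{conv}(\mathbf{e}_a+\mathbf{e}_b,\mathbf{e}_c+\mathbf{e}_d)$, by solving $xy=u$, $xz=v$, $yz=w$. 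Smoothing the three segments and applying the implicit function theorem gives the smooth body.

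Second, your limiting argument for the boundary is flawed: continuity of the projection-area map does not guarantee that a sequence $\mathbf{p}_n\to\mathbf{p}\in\partial\Delta(\mathbb{T}_2)$ admits a convergent, or even bounded, choice of preimages $A_n$. In the zonotope construction above, one of $x,y,z$ blows up as $\mathbf{p}$ approaches a boundary point with positive entries. The paper therefore treats the boundary separately in Section~\ref{sec4}: after symmetrizing to $(s{+}1,s,1,1,s,s{+}1)$, it uses Shenfeld--van Handel's equality characterization (Corollary~\ref{cor:equalityOURS}) to pin down the admissible facet normals and then exhibits an explicit $16$-vertex polytope $A(c)$. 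Boundary points with a zero entry are handled by the finite list in Lemma~\ref{lem:zero-entry}.
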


 For any undefined terms in the theory of convex bodies, we refer to \cite{Schneider}. One of our key tools will be the theory of mixed volumes \cite[Chapter 5]{Schneider}.
To each pair of convex bodies $A$ and $B$ in $\RR^4$, we associate a vector of real numbers 
\[
A\wedge B=(p_{12},p_{13},p_{14},p_{23},p_{24},p_{34}) \in \medwedge^2\, \RR^4,
\]
where the components $p_{ij}$ are defined as the mixed volumes of the projections of $A$ and $B$:
\[
p_{ij}=\Big(\text{the mixed volume of $\pi_{ij}(A)$ and $\pi_{ij}(B)$ in $\RR^2$}\Big).
\]
With this notation, the basic properties of mixed volumes read
\[
A \wedge B=B\wedge A  \ \ \text{and} \ \  \quad (\lambda_1 A_1+\lambda_2 A_2)\wedge B=\lambda_1( A_1 \wedge B )+\lambda_2(A_2\wedge B) \ \ \text{for $\lambda_1,\lambda_2 \ge 0$.}
\]
Theorem~\ref{thm:convexbody} provides a characterization of vectors of the form $A \wedge A$. 
Our second main contribution is 
Theorem~\ref{thm:mainQpoly}, which states that 
\[
\Delta(\mathbb{T}_2) \cap \medwedge^2\, \QQ^4=\left\{A \wedge B\;\middle|\; \text{$A,B$ are rational polytopes in $\RR^4$}\right\}.
\]
We conjecture that, in fact, $\Delta(\mathbb{T}_2) \cap \medwedge^2\, \QQ^4$ is the set of projection areas of rational polytopes in $\RR^4$.

The parallel Question \ref{q:realizablity} in algebraic geometry can be rephrased as follows: For $E \subseteq [4]$, write $\varphi_E$ for the coordinate projection $(\mathbb{P}^1)^{4} \to (\mathbb{P}^1)^{4-|E|}$ that forgets the coordinates labelled by $E$. 
If $S$ is an irreducible surface in $(\mathbb{P}^1)^4$ over the complex numbers, we can uniquely express its homology class as a nonnegative integral linear combination
\[
[S]=p_{12}[\PP^1 \times \PP^1 \times \PP^0 \times \PP^0]
+\cdots +p_{34}[\PP^0 \times \PP^0 \times \PP^1 \times \PP^1].
\] 
Which vectors of nonnegative integers $(p_{12},p_{13},p_{14},p_{23},p_{24},p_{34})$ can arise in this way?
We call such homology classes \emph{realizable}. Such questions are algebraic analogues of the \emph{Steenrod problem} in topology \cite[Problem 25]{Eilenberg}, 
which asks whether every homology class in any simplicial complex $X$ is the image of the fundamental class of a closed oriented manifold by a map into the simplicial complex. In 1954, as part of his work on cobordism theory, Thom showed that in general the answer to this question depends on the chosen coeﬃcients \cite{Thom}: the answer is positive for $\H_\bullet(X, \mathbb{Q})$ and negative for $\H_\bullet(X, \mathbb{Z})$. 
We introduce the necessary definitions adapted to the setting of algebraic geometry.

\begin{definition}
    Let $X$ be a complex smooth projective variety.\footnote{In this paper, a variety and its subvarieties are by definition reduced and irreducible.}
    \begin{enumerate}[(1)]\itemsep 5pt
        \item A class $\eta \in \text{H}_{2k} (X,\mathbb{Z})$ is \emph{realizable over $\mathbb{Z}$} if there is
        a subvariety $V\subseteq X$ with $\eta =[V]$.
        \item A class $\eta \in \text{H}_{2k}(X,\mathbb{Q})$ is \emph{realizable over $\mathbb{Q}$} if  $\lambda \eta$ is realizable over $\mathbb{Z}$ for some $\lambda \in \mathbb{Q}_{\ge 0}$.
        \item A class $\eta\in \text{H}_{2k}(X,\mathbb{R})$
    is \emph{realizable over $\mathbb{R}$}
    if it is a limit of classes realizable
    over $\mathbb{Q}$.
    \end{enumerate}
\end{definition}


We provide a characterization of homology classes in $(\mathbb{P}^1)^4$ that are realizable over $\mathbb{Q}$. 

\begin{theorem}\label{thm:mainQ}
For $\mathbf{p}=(p_{12},p_{13},p_{14},p_{23},p_{24},p_{34}) \in \medwedge^2\,\QQ^4$, consider the homology class
\[
\eta(\mathbf{p}) \coloneq p_{12}[\PP^1 \times \PP^1 \times \PP^0 \times \PP^0] 
+\cdots +p_{34}[\PP^0 \times \PP^0 \times \PP^1 \times \PP^1] \in \H_4((\mathbb{P}^1)^4,\mathbb{Q}).
\]
The class $\eta(\mathbf{p})$ is realizable over $\mathbb{Q}$ if and only if $\mathbf{p}$ is in the set $\Delta(\mathbb{T}_2)$.
\end{theorem}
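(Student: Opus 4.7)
The plan is to prove the two implications separately, drawing on the Hodge index theorem for one direction and the Bernstein--Kushnirenko theorem for the other, with Theorem~\ref{thm:mainQpoly} serving as the bridge between polytopes and algebraic surfaces.

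For the necessary direction, let $S\subseteq (\PP^1)^4$ be an irreducible surface with $[S]$ a positive multiple of $\eta(\mathbf{p})$, and pass to a resolution of singularities $\tau\colon \tilde S\to S$. For each $i\in\{1,2,3,4\}$, set $D_i=\tau^*(H_i|_S)$, where $H_i$ is the pullback to $(\PP^1)^4$ of the point class from the $i$-th factor. Each $D_i$ is nef on $\tilde S$ with $D_i^2=0$ since $H_i^2=0$, and by the projection formula $D_i\cdot D_j=[S]\cdot H_i\cdot H_j$ is a positive multiple of $p_{ij}$ for $i\neq j$. The Hodge index theorem forces the intersection form on $\tilde S$ to have at most one positive eigenvalue, so the $4\times 4$ Gram matrix $Q=(D_i\cdot D_j)$ satisfies $\det Q\le 0$. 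A direct cofactor expansion gives
\[
-\det Q \;=\; 2(xy+yz+zx)-(x^2+y^2+z^2), \qquad x=p_{12}p_{34},\ y=p_{13}p_{24},\ z=p_{14}p_{23},
\]
which by Heron's formula equals $16K^2$, where $K$ is the area of the (possibly degenerate) triangle with side lengths $\sqrt{x},\sqrt{y},\sqrt{z}$. Hence $\det Q\le 0$ is equivalent to the triangle inequalities among $\sqrt{x},\sqrt{y},\sqrt{z}$, which by homogeneity of the defining inequalities yields $\mathbf{p}\in\Delta(\T2)$.

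For the sufficient direction, start with $\mathbf{p}\in \Delta(\T2)\cap \medwedge^2\QQ^4$ and apply Theorem~\ref{thm:mainQpoly} to obtain rational polytopes $A,B\subseteq\RR^4$ with $A\wedge B=\mathbf{p}$. Choose $N\in\ZZ_{>0}$ so that $NA,NB$ are lattice polytopes, and take Laurent polynomials $f,g\in\CC[x_1^\pm,\ldots,x_4^\pm]$ with Newton polytopes $NA,NB$ and generic coefficients. Let $S$ be the Zariski closure in $(\PP^1)^4$ of $V(f,g)\subseteq (\CC^*)^4$. For a generic point $(q_i,q_j)\in(\PP^1)^2$, the fiber of the coordinate projection $S\to(\PP^1)^2$ onto the $(i,j)$-factors lies in $(\CC^*)^2$ and is cut out by the restricted Laurent polynomials $f(q_i,q_j,\cdot,\cdot)$ and $g(q_i,q_j,\cdot,\cdot)$, whose Newton polytopes are the projections $\pi_{ij}(NA)$ and $\pi_{ij}(NB)$. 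Bernstein--Kushnirenko then identifies the cardinality of this fiber with the normalized mixed area $(NA\wedge NB)_{ij}=N^2 p_{ij}$, so $[S]=N^2\,\eta(\mathbf{p})$ and $\eta(\mathbf{p})$ is realizable over $\QQ$.

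The principal obstacle is ensuring irreducibility of $S$. When $\mathbf{p}\in\Delta(\T2)^\circ$, all six projection areas are positive and the Newton polytopes $NA,NB$ are full-dimensional in every coordinate projection, so a Bertini-type argument for sparse systems with generic coefficients should yield irreducibility of $V(f,g)$ and its closure in $(\PP^1)^4$. On the boundary $\partial\Delta(\T2)$, some $p_{ij}$ vanish, the polytopes degenerate, and the generic complete intersection may split; handling these strata, indexed by the rank-$2$ matroids on four elements, will likely require direct case-by-case constructions of surfaces inside smaller coordinate subvarieties of $(\PP^1)^4$.
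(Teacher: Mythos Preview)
Your overall strategy matches the paper's: necessity via the Hodge index theorem on a resolution, sufficiency via Theorem~\ref{thm:mainQpoly} together with a BKK-type passage from polytopes to surfaces. The necessity direction is essentially the paper's Corollary~\ref{cor:oneposeigen} plus Proposition~\ref{prop:T2}; your Heron computation is the same determinant factorization, though the step from $\det Q\le 0$ to the triangle inequalities uses the observation (spelled out in the paper) that since any two of the three factors $-\sqrt{x}+\sqrt{y}+\sqrt{z}$, $\sqrt{x}-\sqrt{y}+\sqrt{z}$, $\sqrt{x}+\sqrt{y}-\sqrt{z}$ sum to something nonnegative, at most one can be negative, so their product is nonnegative iff each is.

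The genuine gap is the one you flag---irreducibility of $S$---but your proposed fix does not work. The rational polytopes produced by the proof of Theorem~\ref{thm:mainQpoly} are \emph{never} both full-dimensional: even for $\mathbf{p}\in\Delta(\T2)^\circ$, one of the two polytopes is the line segment $D_{12|34}$, so the Newton polytope of one of $f,g$ is one-dimensional and your Bertini-for-sparse-systems heuristic cannot get started. The paper's resolution (Theorem~\ref{lem:polytopesgivesurface}) is uniform and avoids any case analysis on the strata of $\partial\Delta(\T2)$. One passes to a smooth projective toric variety $Y$ whose fan refines the normal fans of both polytopes, intersects general members $D_1,D_2$ of the corresponding basepoint-free linear systems, and observes that while $D_1\cap D_2$ may be reducible, its irreducible components are algebraically equivalent to one another (Lemma~\ref{lem:AlgebraicEquivalence}, applied via the maps $f_i\colon Y\to \PP H^0(Y,\mathcal{O}_Y(D_i))^\vee$). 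Choosing any single component and pushing forward to $(\PP^1)^4$ then gives an irreducible surface whose class is $\lambda^{-1}\eta(\mathbf{p})$ for some $\lambda\in\QQ_{>0}$, which is exactly what realizability over $\QQ$ demands.
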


Compare Theorem~\ref{thm:mainQ} with Theorem~\ref{thm:CI}, where we identify the set of complete intersection surface classes in $(\PP^1)^4$, up to a rational multiple, with the set of rational points in
\[
\Delta(\mathbb{T}_1) 
\coloneq \left\{
\,\mathbf{p} \in \medwedge^2\,\RR^4  \;\middle|\; 
p_{ij} \ge 0 \ \ \text{and} \ \ p_{ij} p_{kl} + p_{ik} p_{jl} \ge p_{il} p_{jk} \ \ \text{for any $i,j,k,l$}
\,\right\}.
\]
As in the case of $\Delta(\mathbb{T}_2)$, the image of $\Delta(\mathbb{T}_1)$ in the projective space of $\medwedge^2\, \RR^4$ is the set of $\mathbb{T}_1$-valued points in the Grassmannian $\text{Gr}(2,4)$, also homeomorphic to the $5$-dimensional closed ball \cite{BHKL}.
A boundary point of $\Delta(\mathbb{T}_2)$ is in $\Delta(\mathbb{T}_1)$ if and only if $p_{ij}=0$ for some $i$ and $j$.

Question \ref{q:realizablity}, the realizability problem over $\mathbb{Z}$ for $(\mathbb{P}^1)^4$, is more subtle.
For example, the homology class corresponding to $\mathbf{p}=(1,1,1,1,1,3)$ is realizable over $\mathbb{Q}$ but not over $\mathbb{Z}$.  
To see this, note that the hypothetical surface $S$ should satisfy
\[
[\varphi_{3}(S)]=[\varphi_{4}(S)]=[\mathbb{P}^1 \times \mathbb{P}^1 \times \mathbb{P}^0]+[\mathbb{P}^1 \times \mathbb{P}^0 \times \mathbb{P}^1]+[\mathbb{P}^0 \times \mathbb{P}^1 \times \mathbb{P}^1].
\]
Thus, the defining equations of  $\varphi_{3}^{-1} \varphi_{3}(S)$ and $\varphi_{4}^{-1} \varphi_{4}(S)$ in an affine chart
are of the form 
\begin{align*}
*1 + *x_1+  *x_2+ *x_4+ *x_1x_2+ *x_1x_4+ *x_2x_4+*x_1x_2x_4 &=0,\\
   *1 + *x_1+  *x_2+ *x_3+ *x_1x_2+ *x_1x_3+ *x_2x_3+*x_1x_2x_3 &=0, 
\end{align*}
where the $*$'s are placeholders for coefficients. 
For generic values of $x_3$ and $x_4$,
this system has at most 2 solutions, contradicting $p_{34}=3$.
In Proposition~\ref{cor:necconditions}, we observe more generally that the realizability of $\mathbf{p}$ over $\mathbb{Z}$ implies 
\[ p_{ij}\le p_{ik}p_{jl}+p_{il}p_{jk} \ \ \text{for any $i,j,k,l$.} 
\]
For $\mathbf{p} \in \Delta(\mathbb{T}_2)^\circ$, we know no other obstructions to the realizability of $\mathbf{p}$ over $\mathbb{Z}$.
In Section~\ref{sec:manyProjectiveSpaces}, 
using global surjectivity of the period map for marked complex K3 surfaces \cite[Chapter 7]{Huybrechts},  
we show that for any integral vector $\mathbf{p} \in \Delta(\mathbb{T}_2)$ and any integer $\lambda_1>1$, there is an integer $\lambda_2>0$ such that $\lambda_1\lambda_2^{-1}\eta(\mathbf{p})$ is realizable over $\ZZ$.


\begin{definition}
We say that a real symmetric matrix is \emph{Lorentzian} if it has only nonnegative entries and has at most one positive eigenvalue.
\end{definition}

Let $\mathbf{m}$ be a vector of positive integers $(m_1,\ldots,m_n)$, and let  $\mathbb{P}^\mathbf{m}$ be the product of projective spaces $\prod_{i=1}^n \mathbb{P}^{m_i}$.  
Let $H_i$ be the pullback of the cohomology class of a hyperplane by the $i$-th projection $\mathbb{P}^\mathbf{m} \to \mathbb{P}^{m_i}$.
As observed in \cite{BHKL}, a point $\mathbf{p} \in \medwedge^2\, \RR^4$ is in $\Delta(\mathbb{T}_2)$ if and only if 
\[  
L(\mathbf{p})\coloneq\begin{pmatrix}
0&p_{12}&p_{13}&p_{14}\\
p_{12}&0&p_{23}&p_{24}\\
p_{13}&p_{23}&0&p_{34}\\
p_{14}&p_{24}&p_{34}&0
\end{pmatrix}
\]
is a Lorentzian matrix, see Proposition~\ref{prop:T2}.
Therefore, the following statement extends Theorem~\ref{thm:mainQ}.

\begin{theorem}\label{thm:nondegenerateSteenrod}
For $\eta \in \H_4(\mathbb{P}^\mathbf{m},\mathbb{Q})$, consider the $n \times n$ symmetric matrix $L(\eta)$ with entries
 \[
L(\eta)_{ij}=\int_\eta H_iH_j \ \ \text{for $1 \le i \le j \le n$.}
 \]
 The class $\eta$ is realizable over $\mathbb{Q}$ if and only if  $L(\eta)$ is Lorentzian.
\end{theorem}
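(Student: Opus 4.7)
The plan is to prove necessity and sufficiency separately.

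For the necessity direction, I assume $\eta = \lambda [V]$ for an irreducible subvariety $V \subseteq \PP^{\mathbf{m}}$ and some $\lambda \in \QQ_{>0}$, and choose a resolution of singularities $\mu: \tilde V \to V$. By the projection formula,
\[
L(\eta)_{ij} = \lambda \int_{\tilde V} \mu^* H_i \cdot \mu^* H_j,
\]
so $L(\eta)$ is $\lambda$ times the Gram matrix of $\mu^* H_1, \ldots, \mu^* H_n \in \NS(\tilde V)_{\RR}$ under the intersection pairing. By the Hodge index theorem, this pairing has signature $(1, \rho(\tilde V) - 1)$, and the Gram matrix of any collection of vectors in a space of such signature has at most one positive eigenvalue. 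Non-negativity of the entries is immediate because each $H_i H_j$ is represented by an effective codimension-$2$ cycle intersecting $V$ properly.

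For sufficiency, after clearing denominators I may assume $L(\eta)$ is integral and Lorentzian, and aim to realize a positive integer multiple $\lambda \eta$ by an irreducible subvariety. The approach extends the K3 surface construction underlying Theorem~\ref{thm:mainQ}. Since $L(\eta)$ is Lorentzian, for a suitable positive integer $\lambda$ the scaled matrix $\lambda L(\eta)$ arises as the Gram matrix of $n$ elements of an even lattice of signature $(1, r-1)$ with $r \le n$; by Nikulin's embedding theorems, after further enlarging $\lambda$ this lattice embeds primitively into the K3 lattice $U^{\oplus 3} \oplus E_8(-1)^{\oplus 2}$. The surjectivity of the period map for marked K3 surfaces \cite[Chapter 7]{Huybrechts} then yields a projective K3 surface $S$ with classes $L_1, \ldots, L_n \in \Pic(S)$ satisfying $L_i \cdot L_j = \lambda L(\eta)_{ij}$. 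Replacing $\lambda$ by a further multiple and invoking results of Saint-Donat type, each $L_i$ may be arranged to be base-point free with at least $m_i + 1$ global sections, so a generic $(m_i+1)$-dimensional subsystem defines a morphism $\phi_i: S \to \PP^{m_i}$. The assembled map $\phi = (\phi_1, \ldots, \phi_n): S \to \PP^{\mathbf{m}}$ satisfies $\phi^* H_i = L_i$, hence
\[
\int_{\phi_*[S]} H_i H_j = L_i \cdot L_j = \lambda L(\eta)_{ij},
\]
so $\phi_*[S] = \lambda \eta$ and the irreducible image $\phi(S)$ realizes a positive rational multiple of $\eta$.

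The hard part will be the case in which some diagonal entry $L(\eta)_{ii}$ vanishes while $m_i > 1$. An isotropic class on a K3 is the fiber class of an elliptic fibration and defines a morphism only to $\PP^1$, not to $\PP^{m_i}$ for $m_i > 1$. To accommodate this, one must compose $\phi_i$ with a degree-$d$ rational normal curve $\PP^1 \hookrightarrow \PP^{m_i}$, but this rescales the $i$-th row and column of the realized Gram matrix by $d$ and must be arranged coherently across all such indices. Controlling these rescalings simultaneously with the lattice embedding, while ensuring that the composite map $\phi$ does not collapse dimension (so that $\phi(S)$ is a genuine surface), constitutes the technical heart of the proof.
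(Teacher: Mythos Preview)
Your necessity argument via the Hodge index theorem is correct and matches the paper's Proposition~\ref{lem:at-most-one-positive-eigenvalue} and Corollary~\ref{cor:oneposeigen}.

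The sufficiency direction has a genuine gap. The K3 approach is fundamentally bounded by Picard rank: the N\'eron--Severi group of a complex K3 surface has rank at most $20$, so if $L(\eta)$ is nondegenerate of rank $n>20$ there is no K3 surface carrying $n$ divisor classes whose Gram matrix is a multiple of $L(\eta)$. Even below this bound, Nikulin's embedding theorem has hypotheses; the version the paper quotes (via Morrison) guarantees that every even lattice of signature $(1,n-1)$ is realized as $\NS$ of a K3 only for $n\le 11$. For this reason the paper reserves the K3 construction for the sharper integral statement Theorem~\ref{thm:realization over Z}, which is explicitly restricted to $(\PP^1)^n$ with $2\le n\le 11$.

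For arbitrary $n$ the paper takes a different route: it builds a \emph{prolific} surface $Y$ of Picard rank $n+3$ (Theorem~\ref{prop:exists surface}), namely $Y=E\times C$ where $C$ is a very general complete intersection curve in $E^n$ for an elliptic curve $E$ without complex multiplication. Prolific means $Y$ has no negative curves and every nef divisor is numerically semiample, so the nef cone is the entire future timelike cone $\mathcal{C}(Y)$. The three extra dimensions of Picard rank are exactly what is needed to invoke Meyer's theorem on indefinite rational quadratic forms in $\ge 5$ variables (Lemma~\ref{lem:lattice embedding}) and isometrically embed the form $L(\eta)$ into $\NS(Y)_\QQ$. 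This is the step your proposal elides with ``for a suitable positive integer $\lambda$ the scaled matrix $\lambda L(\eta)$ arises as the Gram matrix of $n$ elements of an even lattice'': over $\QQ$ one needs an actual embedding argument, and Meyer's theorem supplies it only when the target quadratic space has rank at least $\operatorname{rank} L(\eta)+3$.

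Finally, the ``hard part'' you flag about isotropic classes with $m_i>1$ evaporates in the paper's setup. One takes $\varphi_i$ to be the map given by the complete linear system $|\ell B_i|$ into $\PP^{N_i}$; when $L(\eta)_{ii}=0$ the image is a curve, and one simply composes with a linear inclusion $\PP^{N_i}\hookrightarrow\PP^{m_i}$ or a general linear projection $\PP^{N_i}\dashrightarrow\PP^{m_i}$, either of which preserves the pullback of the hyperplane class on the nose. No rational normal curves, and hence no row/column rescalings, are needed.
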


Extending Theorem~\ref{thm:nondegenerateSteenrod},
we propose in Conjecture~\ref{con:main} a numerical characterization of $2$-dimensional universally pseudoeffective classes  that are realizable over $\mathbb{R}$. 
For the case when $X$ is the Grassmannian $\text{Gr}(d,n)$, see Theorem~\ref{thm:Grassmannian}.

Based on our findings, we formulate a convex geometric analogue of Theorem~\ref{thm:nondegenerateSteenrod} that extends  Theorem~\ref{thm:convexbody} to any dimension $d \ge 3$. Extending the notation before, we write $\pi_{ij}$ for the coordinate projection of $\mathbb{R}^d$ onto the coordinate subspace orthogonal to the standard basis vectors $\mathbf{e}_i$ and $\mathbf{e}_j$.

\begin{conjecture}\label{con:realizabilitybypolytopes}
For any $d \times d$ real symmetric matrix $(p_{ij})$ with nonnegative off-diagonal and zero diagonal entries, the following conditions are equivalent.
\begin{enumerate}[(1)]\itemsep 5pt
\item There is a convex body $A \subseteq \mathbb{R}^d$ that satisfies
\[
p_{ij}=\Big(\text{the volume of the projection $\pi_{ij}(A)$}\Big) \ \ \text{for all $i\neq j$.}
\]
\item The matrix $(p_{ij})$ is Lorentzian.
\end{enumerate}
\end{conjecture}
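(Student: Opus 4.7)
My plan is to prove the two implications separately. The direction $(1) \Rightarrow (2)$ should follow from the Alexandrov--Fenchel inequality. Using the Kubota-type identification of projection volume as a mixed volume,
\[
V_{d-2}(\pi_{ij}(A)) = c_d \cdot V\bigl(A[d-2], L_i, L_j\bigr), \qquad L_i = [0, \mathbf{e}_i],
\]
for a positive combinatorial constant $c_d$, the matrix $M = (p_{ij})$ has nonnegative off-diagonal entries; its diagonal is zero by the convention adopted in the statement. Substituting $K_1 = \sum_i u_i L_i$ and $K_2 = \sum_j v_j L_j$ with $u, v \ge 0$ into Alexandrov--Fenchel, with the remaining $d - 2$ slots set equal to $A$, yields the reverse Cauchy--Schwarz $(u^\top M v)^2 \ge (u^\top M u)(v^\top M v)$ on the positive orthant. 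Combined with Perron--Frobenius (which supplies a strictly positive eigenvector for the top eigenvalue of $M$) and an elementary perturbation to move a hypothetical second positive eigenvector into the positive orthant, this forces $M$ to have at most one positive eigenvalue, so $M$ is Lorentzian.

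For the substantive direction $(2) \Rightarrow (1)$, I would first reduce to the interior of the Lorentzian cone: any boundary matrix should be obtainable as a Hausdorff limit of realizations of interior matrices, after an appropriate volume normalization preventing bodies from degenerating or escaping to infinity. For interior matrices, the plan is to show that the map $\Phi(A) = \bigl(V_{d-2}(\pi_{ij}(A))\bigr)_{1 \le i < j \le d}$, sending a smooth convex body $A \subseteq \RR^d$ to its vector of projection volumes, surjects onto the interior of the Lorentzian cone. Linearizing $\Phi$ at a base point such as the unit ball, one can compute the derivative via the first-variation formula for mixed volumes and aim to show that its image is transverse to the boundary of the cone; a global openness argument, combined with the path-connectedness of the interior of the Lorentzian cone (see \cite{BHKL}), would then yield surjectivity. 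An alternative route passes through algebra: for rational Lorentzian matrices, Theorem~\ref{thm:nondegenerateSteenrod} gives an algebraic cycle realization in $(\PP^1)^d$, which one could attempt to convert into a polytope with the prescribed coordinate projection volumes via Newton--Okounkov bodies or toric degenerations, extending to all Lorentzian matrices by density.

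The main obstacle lies in the constructive step of $(2) \Rightarrow (1)$. Alexandrov--Fenchel is a one-directional tool, and there is no known general mechanism for producing a convex body with prescribed projection data. Minkowski sums behave linearly under mixed volumes but not under the volume of a projection, so naive additive constructions cannot directly transport the $d = 4$ argument of Theorem~\ref{thm:convexbody} to higher dimension. The $d = 4$ proof relies on the combinatorics of $\mathrm{Gr}(2,4)$ and the triangular hyperfield $\T2$, whose higher-dimensional analogs do not admit an obvious convex realization, so any proof of Conjecture~\ref{con:realizabilitybypolytopes} will likely require a new constructive device -- perhaps mixed ellipsoids, fiber polytopes, or a finer analysis of the local geometry of the image of $\Phi$ near the boundary of the Lorentzian cone -- capable of prescribing all $\binom{d}{2}$ projection volumes simultaneously.
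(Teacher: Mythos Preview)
The statement is a \emph{conjecture}, and the paper does not prove it in full; it establishes $(1)\Rightarrow(2)$ in general (Corollary~\ref{cor:projectionvolumes}) and $(2)\Rightarrow(1)$ only when $d=4$ (Theorem~\ref{thm:convexbody}). Your proposal is consistent with this state of affairs: you give an argument for $(1)\Rightarrow(2)$ and correctly flag $(2)\Rightarrow(1)$ as the open step.

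For $(1)\Rightarrow(2)$, your route and the paper's are essentially the same. The paper uses the identity $\MV(A,\ldots,A,\Delta_i,\Delta_j)=\mathrm{Vol}(\pi_{ij}(A))$ (your Kubota-type identification, with $c_d=1$ in the paper's normalization) and then deduces the Lorentzian property from the Hodge index theorem via toric geometry; it notes in a footnote that this is equivalent to the Alexandrov--Fenchel inequality, which is what you invoke directly. Your passage from the reverse Cauchy--Schwarz on the positive orthant to ``at most one positive eigenvalue'' via Perron--Frobenius is standard, though as written it needs a word when some $p_{ij}$ vanish (Perron--Frobenius then only gives a nonnegative eigenvector); a continuity argument fixes this.

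Your sketch for $(2)\Rightarrow(1)$ is a reasonable wish list, but one of its reductions is more delicate than you indicate. You propose obtaining boundary Lorentzian matrices as Hausdorff limits of bodies realizing interior ones, ``after an appropriate volume normalization preventing bodies from degenerating.'' The paper's Section~\ref{sec4} shows precisely that this fails naively already for $d=4$: no \emph{smooth} convex body realizes a point of $\partial\Delta(\mathbb{T}_2)$ with all entries positive (Corollary~\ref{cor:equalityOURS} forces any realizing body to have singular boundary), so the one-parameter family approaching the boundary necessarily develops singularities, and the paper has to invoke the Shenfeld--van Handel equality analysis for Alexandrov--Fenchel to locate the correct limiting polytope explicitly. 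Any general-$d$ argument would need a replacement for this step, not just a compactness principle.
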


By Corollary~\ref{cor:projectionvolumes}, condition $(1)$ implies condition $(2)$ in Conjecture~\ref{con:realizabilitybypolytopes}. 
By Theorem~\ref{thm:convexbody}, condition $(2)$ implies condition $(1)$ when $d=4$.
One may also formulate the weaker conjecture that $(p_{ij})$ is Lorentzian if and only if there exist convex bodies $A_1,\ldots,A_{d-2} \subseteq \RR^d$ such that
\[
p_{ij}=\Big(\text{the mixed volume of the projections $\pi_{ij}(A_1),\ldots,\pi_{ij}(A_{d-2})$}\Big) \ \ \text{for all $i\neq j$.}
\]
See Remark~\ref{rmk:subtlety} 
for the subtlety involved in formulating a similar conjecture for codimension $3$ projections $\pi_{ijk}:\RR^d \to \RR^{d-3}$. 






\subsection*{Notations}



The field $\mathbf{k}$ will be the field of real or rational numbers. We will work with projective varieties over $\CC$, and use singular homology and cohomology of their underlying topological spaces. 
All of our theorems remain valid over an arbitrary algebraically closed field if we replace the homology group by the group of algebraic cycles modulo homological or numerical equivalence, except in Theorem~\ref{prop:exists surface}, where we need the ground field to be uncountable or of characteristic $0$,  and in Theorem~\ref{thm:realization over Z}, where we need the ground field to be of characteristic $0$. 
A lattice polytope in $\RR^d$ is a convex polytope all of whose vertices are in $\ZZ^d$.
A rational polytope in $\RR^d$ is a convex polytope all of whose vertices are in $\QQ^d$.



\subsection*{Acknowledgements}
The authors thank the Institute for Advanced Study for providing an excellent working environment, and Matt Larson, Elizabeth Pratt, Stefan Schreieder, and Chenyang Xu for their insightful comments. 
Daoji Huang is  supported by the Charles Simonyi Endowment and NSF-DMS2202900.
June Huh is partially supported by the Oswald Veblen Fund, the Fund for Mathematics, and the Simons Investigator Grant.
 Mateusz Micha\l ek  is partially supported by the Charles Simonyi Endowment and the DFG grant 467575307. 
 Botong Wang is partially supported by the NSF grant DMS-1926686.

\section{Homology classes and volumes}\label{sec:homclandvol}


Let $X$ be a complex smooth projective variety, $S$ be an irreducible surface in $X$, and let $D_1,\dots, D_n$ be the cohomology classes of divisors on $X$.\footnote{The statements of this section hold over any algebraically closed field if we use the Chow groups in place of the singular homology groups.} 
We consider the $n \times n$ intersection matrix $L(S)=L(S; D_1,\ldots,D_n)$ defined by
\[
L(S)_{ij}=\int_S D_iD_j.
\]
The following proposition is a direct consequence of the Hodge Index Theorem \cite[Section 5.1]{Hartshorne} applied to the resolution of singularities of $S$.

\begin{proposition}\label{lem:at-most-one-positive-eigenvalue}
    The matrix $L(S)$ has at most one positive eigenvalue. If the linear span of $D_i$ contains an ample class of $X$, then $L(S)$ has exactly one positive eigenvalue.
\end{proposition}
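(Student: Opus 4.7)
The plan is to pull everything back to a smooth projective surface where the Hodge Index Theorem applies directly, and then identify $L(S)$ as a Gram matrix of divisor classes under the intersection form.

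First I would take a resolution of singularities $\nu\colon \widetilde{S}\to S$, so that $\widetilde{S}$ is a smooth projective surface mapping birationally (hence with degree $1$) onto $S\subseteq X$. By the projection formula,
\[
L(S)_{ij}=\int_S D_iD_j=\int_{\widetilde{S}} \nu^*D_i\cdot \nu^*D_j,
\]
so $L(S)$ is the Gram matrix of the classes $\nu^*D_1,\dots,\nu^*D_n\in \mathrm{NS}(\widetilde{S})_{\RR}$ with respect to the intersection pairing on $\widetilde{S}$. Any symmetric bilinear form on a finite-dimensional real vector space has a signature, and the signature of a Gram matrix of any finite collection of vectors is bounded above by the signature of the ambient form. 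Hence the number of positive eigenvalues of $L(S)$ is at most the number of positive eigenvalues of the intersection pairing on $\mathrm{NS}(\widetilde{S})_{\RR}$.

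The Hodge Index Theorem says precisely that this pairing has exactly one positive eigenvalue on $\mathrm{NS}(\widetilde{S})_{\RR}$. This proves the first assertion. For the second assertion, suppose $A=\sum a_iD_i$ is ample on $X$. Then $\nu^*A$ is the pullback of an ample class under a generically finite morphism $\widetilde{S}\to X$ whose image is the surface $S$; thus $\nu^*A$ is big and nef on $\widetilde{S}$, in particular $(\nu^*A)^2>0$. Unpacking,
\[
a^{\mathsf T}L(S)\, a=\int_{\widetilde{S}}(\nu^*A)^2>0,
\]
so the quadratic form defined by $L(S)$ is positive on the vector $a=(a_1,\dots,a_n)$. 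Combined with the upper bound of one positive eigenvalue from the first part, this forces $L(S)$ to have exactly one positive eigenvalue.

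There is essentially no obstacle here beyond bookkeeping: the only point that deserves care is invoking the projection formula for the possibly singular surface $S$ via the resolution $\widetilde{S}$, and noting that pulling back an ample class along a generically finite morphism to $X$ yields a big and nef class on $\widetilde{S}$ (rather than an ample class, which in general it will not be).
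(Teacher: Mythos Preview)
Your proof is correct and follows exactly the approach indicated by the paper, which simply states that the proposition is a direct consequence of the Hodge Index Theorem applied to a resolution of singularities of $S$. Your added care about the projection formula and the big-and-nef property of $\nu^*A$ fills in precisely the bookkeeping the paper leaves implicit.
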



Let $\mathbf{m}$ be a vector of nonnegative integers $(m_1,\ldots,m_n)$, and let  $\mathbb{P}^\mathbf{m}$ be the product of projective spaces $\prod_{i=1}^n \mathbb{P}^{m_i}$.
By the K\"unneth formula, we have
\[
\H^\bullet(\PP^{\mathbf{m}}, \ZZ)\simeq \bigotimes_{i=1}^n \mathbb{Z}[H_i]/(H_i^{m_i+1}), 
\]
where $H_i$ is the pullback of the hyperplane class by the $i$-th projection $\mathbb{P}^{\mathbf{m}} \to \mathbb{P}^{m_i}$.

\begin{corollary}\label{cor:oneposeigen}
For any irreducible surface $S$ in $\mathbb{P}^\mathbf{m}$, the matrix $L(S; H_1,\dots, H_n)$ is Lorentzian. 
\end{corollary}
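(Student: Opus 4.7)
The plan is to verify directly the two defining properties of a Lorentzian matrix for $L(S) = L(S; H_1,\ldots,H_n)$: nonnegativity of every entry, and the presence of at most one positive eigenvalue.

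For the nonnegativity of $\int_S H_iH_j$, I would argue geometrically. When $i \ne j$, the class $H_iH_j \in \H^4(\PP^{\mathbf{m}},\ZZ)$ is the pullback of the class of a point in $\PP^{m_i}\times\PP^{m_j}$ under the coordinate projection $\pi_{ij}\colon \PP^{\mathbf{m}} \to \PP^{m_i}\times\PP^{m_j}$. By the projection formula applied on a resolution $\widetilde{S} \to S$, the integer $\int_S H_iH_j$ equals $\deg(\pi_{ij}|_S)\cdot \deg(\pi_{ij}(S))$ when the restriction $\pi_{ij}|_S$ is generically finite, and vanishes otherwise; either way the entry is a nonnegative integer. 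For a diagonal entry $\int_S H_i^2$, which is automatically $0$ when $m_i=1$, I would represent $H_i$ by two general effective divisors in $|H_i|$ and, after pulling back to $\widetilde{S}$, compute the intersection number as a nonnegative count of points using a standard Bertini/moving argument.

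For the bound on positive eigenvalues, I would invoke Proposition~\ref{lem:at-most-one-positive-eigenvalue} directly, taking $X = \PP^{\mathbf{m}}$ and $D_i = H_i$. Since each $H_i$ is the cohomology class of a divisor on $\PP^{\mathbf{m}}$, the proposition immediately gives that $L(S; H_1,\ldots,H_n)$ has at most one positive eigenvalue, which is the second Lorentzian condition.

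There is no substantive obstacle; the corollary is essentially a direct specialization of Proposition~\ref{lem:at-most-one-positive-eigenvalue} combined with the elementary effectivity observation above. The only minor subtlety is handling uniformly the case when some restriction $\pi_{ij}|_S$ fails to be generically finite (e.g.\ when $S$ is contained in a fiber of $\pi_{ij}$), which is absorbed by the projection formula forcing the corresponding entry to vanish.
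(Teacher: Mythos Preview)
Your proposal is correct and matches the paper's approach: the paper states the corollary without proof, treating it as an immediate specialization of Proposition~\ref{lem:at-most-one-positive-eigenvalue} together with the evident nonnegativity of $\int_S H_iH_j$ (each $H_i$ being basepoint-free and nef on $\PP^{\mathbf{m}}$). Your explicit verification of nonnegativity via the projection formula is a sound way to spell this out.
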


This corollary provides necessary restrictions on the classes in $\H_4(\PP^{\mathbf{m}}, \ZZ)$ that are realizable over $\ZZ$. However, these conditions are not sufficient, see Proposition~\ref{cor:necconditions} and Example \ref{exm:nonrepbut multiple is}. 

One of our key tools will be the theory of mixed volumes. For a sequence of convex bodies $A_1,\ldots,A_d$ in $\mathbb{R}^d$, the \emph{mixed volume} $\MV(A_1,\ldots,A_d)$ is given by 
\[
\MV(A_1,\ldots,A_d)\coloneq \frac{1}{d!} \frac{\partial}{\partial x_1} \cdots \frac{\partial}{\partial x_d} \text{Vol}(x_1A_1+\cdots+x_dA_d),
\]
where $\text{Vol}$ denotes the $d$-dimensional normalized volume in $\RR^d$. 
The mixed volume enjoys several important properties: it is symmetric in its arguments, multilinear with respect to Minkowski sums with nonnegative coefficients, monotone with respect to inclusion of convex bodies, and satisfies the identity
\[
\MV(A,\ldots,A)=\text{Vol}(A) \ \ \text{for any convex body $A \subseteq \RR^d$.}
\]
For additional background and further properties of mixed volumes, we refer the reader to \cite[Chapter 5]{Schneider}.

The connection to intersection theory is provided by toric geometry. We start by recalling the Bernstein--Khovanskii--Kushnirenko theorem \cite[Section 7.5]{CLO}. Consider an algebraic torus $T\simeq (\CC^*)^d$ with the lattice of characters $M_T\simeq \mathbb{Z}^d$. Let $P_1,\ldots,P_d$ be a sequence of lattice polytopes in $M_T\otimes\mathbb{R}$. 
We write $f_{P_i}$ for a general $\mathbb{C}$-linear combination of characters in $P_i\cap M_T$, viewed as a function on $T$.

\begin{theorem}[Bernstein--Khovanskii--Kushnirenko]\label{thm:BK}
The number of solutions $x\in T$ of the system of equations $f_{P_1}(x)=\ldots=f_{P_d}(x)=0$ is equal to the mixed volume $\MV(P_1,\dots,P_d)$.
\end{theorem}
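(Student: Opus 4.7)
My plan is to prove the formula by reducing solution-counting to an intersection number on a projective toric compactification of $T$, and then identifying that intersection number with the mixed volume.

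First, choose a smooth projective toric variety $X_\Sigma$ whose fan $\Sigma$ refines the normal fan of each $P_i$; such a fan exists by taking a common refinement of the normal fans and then applying toric resolution of singularities. Each lattice polytope $P_i$ determines a torus-invariant nef Cartier divisor $D_{P_i}$ on $X_\Sigma$, and the global sections of the associated line bundle $L_i = \mathcal{O}(D_{P_i})$ are spanned by the characters $\chi^m$ for $m \in P_i \cap M_T$. Under this identification, $f_{P_i}$ corresponds to a generic global section $s_i \in \mathrm{H}^0(X_\Sigma, L_i)$.

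The technical heart of the argument is the following. By a Bertini-type theorem, for generic $s_1,\ldots,s_d$ the scheme $Z(s_1) \cap \cdots \cap Z(s_d)$ is a finite reduced set of points; the main point is to show that none of these points lies on the toric boundary $X_\Sigma \setminus T$. For each torus-invariant stratum $V_\sigma \subseteq X_\Sigma$ of codimension $k$, the restriction of $L_i$ to $V_\sigma$ corresponds to the face $(P_i)_\sigma$ of $P_i$ determined by $\sigma$, and restricting $s_i$ gives a generic section of that line bundle on $V_\sigma$. Since $V_\sigma$ has dimension $d - k$, a stratum-by-stratum dimension count shows that $d$ generic divisors $Z(s_i)$ cannot meet any positive-codimension stratum, so all zeros lie in $T$. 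Once this is established, the solution count equals the intersection number $\int_{X_\Sigma} c_1(L_1)\cdots c_1(L_d) = D_{P_1}\cdots D_{P_d}$.

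To identify this intersection number with $\MV(P_1,\ldots,P_d)$, I would use the standard toric self-intersection identity $D_P^d = \text{Vol}(P)$ in the paper's normalized volume, together with the equality $\text{Vol}(P) = \MV(P,\ldots,P)$. Both assignments $(P_1,\ldots,P_d) \mapsto D_{P_1}\cdots D_{P_d}$ and $(P_1,\ldots,P_d) \mapsto \MV(P_1,\ldots,P_d)$ are symmetric and multilinear with respect to Minkowski sums with nonnegative rational coefficients, so polarization of symmetric forms upgrades agreement on the diagonal to agreement on every tuple; a density argument extends from rational to real coefficients.

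The hardest step is the genericity and confinement argument in the second paragraph: one must verify that, simultaneously across every torus-invariant stratum, the generic sections $s_i$ fail to have common zeros on the boundary while producing a transverse reduced intersection on $T$. Everything else (the toric dictionary and polarization of mixed volumes) is standard once the boundary is excluded.
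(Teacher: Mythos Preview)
The paper does not prove Theorem~\ref{thm:BK}; it is stated as a classical background result with a reference to \cite[Section 7.5]{CLO}. So there is no ``paper's own proof'' to compare against.

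Your outline is the standard toric-geometric proof of BKK and is essentially correct. A couple of minor points worth tightening: for the reducedness of the intersection you are implicitly using Bertini over $\CC$ (or more generally characteristic zero), which is fine in the paper's setting but should be flagged; and in the boundary-avoidance step, you should note explicitly that because $\Sigma$ refines the normal fan of each $P_i$, the line bundle $L_i$ is globally generated, so a generic section restricts to a generic (hence nonzero when the face is a point, and otherwise a codimension-one divisor) section on each torus-invariant stratum---this is what makes the stratum-by-stratum dimension count go through cleanly. With those clarifications your argument is complete.
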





Let $\mathbf{m}$ be a vector of positive integers $(m_1,\ldots,m_n)$ whose sum is $d \ge 2$. 
In the remainder of this section, let $T$ be the product torus  $\prod_{i=1}^n (\mathbb{C}^*)^{m_i}$ with the lattice of characters $\bigoplus_{i=1}^n \mathbb{Z}^{m_i}$.
For each $i$, we set 
\[
\Delta_i\coloneq (\text{the convex hull of $\mathbf{0}$ and the standard basis vectors of $\mathbb{R}^{m_i}$}) \subseteq \RR^d=\bigoplus_{i=1}^n \mathbb{R}^{m_i}.
\]
We may view $f_{\Delta_i}$ as a function on the torus $T$ of $\mathbb{P}^\mathbf{m}$, whose homogenization defines the inverse image  of a hyperplane under the $i$-th projection $\mathbb{P}^{\mathbf{m}} \to \mathbb{P}^{m_i}$.

\begin{theorem}\label{lem:polytopesgivesurface}
Let $P_1,\ldots,P_{d-2}$ be convex polytopes in $\mathbb{R}^d$. 
\begin{enumerate}[(1)]\itemsep 5pt
\item If every $P_i$ is a rational polytope, then there is  $\lambda \in \QQ_{>0}$ and an irreducible surface $S \subseteq \PP^\mathbf{m}$ such that
\[
\int_S H_iH_j=\lambda^{-1} \, \MV(P_1,\ldots,P_{d-2},\Delta_i,\Delta_j) \ \ \text{for all $i,j$.}
\]
\item If every $P_i$ is a lattice polytope, then there is $\lambda \in \ZZ_{>0}$ and an irreducible surface $S \subseteq \PP^\mathbf{m}$ such that
\[
\int_S H_iH_j=\lambda^{-1} \, \MV(P_1,\ldots,P_{d-2},\Delta_i,\Delta_j) \ \ \text{for all $i,j$.}
\]
\end{enumerate}
In the latter case, if 
$\text{dim}(\sum_{i\in I}P_i)>|I|$ for all nonempty $I \subseteq [d-2]$,
then we may take $\lambda=1$.
\end{theorem}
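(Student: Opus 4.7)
The plan is to realize $S$ as an irreducible component of a generic complete intersection in the open torus $T\subset \PP^{\mathbf{m}}$ cut out by Laurent polynomials whose Newton polytopes are $P_1,\ldots,P_{d-2}$, and then to read off the intersection numbers $\int_S H_iH_j$ via the Bernstein--Khovanskii--Kushnirenko theorem (Theorem~\ref{thm:BK}).

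First I would reduce (1) to (2). Pick $N\in\ZZ_{>0}$ such that every $NP_i$ is a lattice polytope. Applying (2) to $NP_1,\ldots,NP_{d-2}$ produces $\lambda'\in\ZZ_{>0}$ and an irreducible surface $S$ with
\[
\int_S H_iH_j \;=\; (\lambda')^{-1}\MV(NP_1,\ldots,NP_{d-2},\Delta_i,\Delta_j) \;=\; (\lambda')^{-1}N^{d-2}\MV(P_1,\ldots,P_{d-2},\Delta_i,\Delta_j),
\]
by the multilinearity of mixed volume in its first $d-2$ arguments; setting $\lambda = \lambda'/N^{d-2}\in\QQ_{>0}$ gives (1).

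For the lattice case I would choose generic Laurent polynomials $f_i$ supported on $P_i\cap \ZZ^d$, let $Z=V(f_1,\ldots,f_{d-2})\subset T$, and let $\bar Z\subset \PP^{\mathbf{m}}$ be its closure. To compute $\int_{\bar Z}H_iH_j$ I would intersect $\bar Z$ with generic translates of $H_i$ and $H_j$, realized as vanishing loci on $T$ of generic linear forms $f_{\Delta_i}$ and $f_{\Delta_j}$. A standard toric argument (resolving $\PP^{\mathbf{m}}$ to a smooth toric variety whose fan refines the normal fans of the $P_i$ and $\Delta_j$) shows that for generic translates no intersection point escapes to the toric boundary $\PP^{\mathbf{m}}\setminus T$, so Theorem~\ref{thm:BK} yields
\[
\int_{\bar Z}H_iH_j \;=\; \#\{x\in T : f_1(x)=\cdots=f_{d-2}(x)=f_{\Delta_i}(x)=f_{\Delta_j}(x)=0\} \;=\; \MV(P_1,\ldots,P_{d-2},\Delta_i,\Delta_j).
\]
I would then decompose $[\bar Z]=\sum_\alpha \mu_\alpha [\bar S_\alpha]$ into classes of irreducible components. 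Since the parameter space of coefficient tuples is an irreducible Zariski-open subset of a product of vector spaces, monodromy acts transitively on the finitely many components of the generic $Z$, so all $\bar S_\alpha$ share a common class $[\bar S]$ and multiplicity $\mu$; setting $\lambda=\mu\cdot\#\{\alpha\}$ produces an irreducible $S\subset \PP^{\mathbf{m}}$ with the required intersection numbers.

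For the unconditional $\lambda=1$ refinement, under the hypothesis $\dim(\sum_{i\in I}P_i)>|I|$ for every nonempty subset $I$ of $\{1,\ldots,d-2\}$, I would invoke an Esterov-type irreducibility criterion: the generic Newton complete intersection with these prescribed Newton polytopes is irreducible and reduced, which forces $\mu=1$ and $\#\{\alpha\}=1$. The main obstacle is precisely this monodromy/irreducibility step: one must identify the correct stratum of the coefficient space over which the components are permuted, and one must match the stated dimension hypothesis with the combinatorial input of an irreducibility theorem for generic sparse systems. A secondary and more routine difficulty is verifying that generic translates of $H_i$ and $H_j$ avoid the toric boundary of $\PP^{\mathbf{m}}$, which is handled by passing to a common smooth refinement of the relevant normal fans.
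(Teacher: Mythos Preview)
Your approach is along the same lines as the paper's: both build the surface as (a component of) a generic complete intersection associated to the $P_i$'s, compute the numbers $\int_S H_iH_j$ via BKK, and then argue that all components carry the same class. The reduction of (1) to (2) by scaling is identical.

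There is one genuine gap and one difference in tooling.

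\textbf{The monodromy step.} You justify transitivity by saying the parameter space $B$ of coefficient tuples is irreducible. That is not enough: irreducibility of the base does not imply monodromy transitivity on components of the fiber (a constant family with two disjoint components over $\mathbb{A}^1$ already fails it). What is needed is irreducibility of the \emph{incidence variety} $\mathcal{Z}=\{(x,(f_k)) : f_k(x)=0\}\subset T\times B$, which does hold here because the projection $\mathcal{Z}\to T$ has affine-space fibers. Once you have that, the conclusion that the components of a general fiber are algebraically (hence homologically) equivalent is exactly the content of the paper's Lemma~\ref{lem:AlgebraicEquivalence}, proved via Stein factorization of the normalization. The paper applies that lemma step by step on a fixed toric resolution $Y$ (defining $Z_{i+1}$ as any component of $Z_i\cap D_{i+1}$), rather than to the full $(d-2)$-fold intersection at once; this incremental version also sidesteps worrying about whether the generic $Z$ is nonempty and pure $2$-dimensional.

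\textbf{The $\lambda=1$ case.} You defer to an unnamed Esterov-type criterion. The paper instead gives a self-contained inductive Bertini argument on $Y$: at each stage one shows the image $f_{i+1}(Z_i)$ has dimension $>1$ by producing, via Rado's theorem applied to the hypothesis $\dim(\sum_{i\in I}P_i)>|I|$, a basis of $\RR^d$ inside $P_1,\ldots,P_i,P_{i+1},P_{i+1},P,\ldots,P$, which forces $\MV(P_1,\ldots,P_i,P_{i+1},P_{i+1},P,\ldots,P)>0$ and hence rules out $\dim f_{i+1}(Z_i)\le 1$; Bertini irreducibility then gives $Z_{i+1}$ irreducible. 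Your Esterov route is a reasonable alternative, but you would have to match the hypothesis $\dim(\sum_{i\in I}P_i)>|I|$ precisely to the combinatorial input of whichever irreducibility theorem you cite; the paper's Rado+Bertini argument avoids that dependency entirely.
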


By Poincar\'e duality, the constant and the mixed volumes in Theorem~\ref{lem:polytopesgivesurface} determine the homology class of $S$ in $\PP^\mathbf{m}$. By varying the polytopes, we obtain many classes in $\H_4(\PP^\mathbf{m},\QQ)$ that are realizable over $\QQ$. In Section~\ref{Section4}, we show that every class in $\H_4((\PP^1)^4,\QQ)$ that is realizable over $\QQ$ can be realized in this way.

\begin{proof}
We use the language and basic constructions of toric geometry. For background, see \cite{FultonToric}. 
To keep our argument valid in arbitrary characteristic,  we avoid applying Bertini’s theorem on the smoothness of general members of a basepoint-free linear system.

It is enough to prove the statements on lattice polytopes. 
Let $Y$ be a smooth projective toric variety of $T$ whose normal fan refines the normal fans of $P_i$ and $\Delta_i$.
 Each $P_i$ defines a basepoint-free divisor class $D_i$ on $Y$ and a map
 \[
 f_i:Y\longrightarrow \PP \H^0(Y,\mathcal{O}_Y(D_i))^\vee,
 \]
 whose image has dimension equal to that of $P_i$.
 To simplify notation, we write $D_i$ for a general member in its linear system.
 First suppose that 
 $\dim(\sum_{i\in I}P_i)>|I|$ for every nonempty index set $I$. 
 Under this assumption, we prove  that the intersection $Z$ of all the $D_i$'s is 
 an irreducible surface in $Y$.

We proceed inductively, assuming that the intersection $Z_i:=D_1\cap\dots\cap D_i$ is  
irreducible of codimension $i$ in $Y$ for $i<k$. 
We claim that the dimension of $f_{i+1}(Z_i)$ is strictly larger than one. 

For contradiction, suppose this is not the case. 
Then, the intersection $Z_i \cap D_{i+1}\cap E_{i+1}$ is empty, where $E_{i+1}$ is another general member in the linear system of $D_{i+1}$. Let $H$ be a very ample divisor class on $Y$ corresponding to a polytope $P$. 
The Bernstein--Khovanskii--Kushnirenko theorem implies that 
\[
\MV(P_1,\dots,P_i, P_{i+1}, P_{i+1},P, \dots, P)=0.
\]
By translating $P_i$ and $P$, we may assume they all contain the origin of $\RR^d$. By our assumption and Rado's theorem \cite[Theorem 1]{rado1942theorem}, picking general points in $P_1,\dots, P_i, P_{i+1}, P_{i+1},P,\ldots,P$, we obtain a basis of $\RR^d$. This contradicts the fact that mixed volume is zero, as mixed volume of linearly independent line segments is nonzero and mixed volume is monotone under inclusions of convex bodies.
This finishes the proof of the claim.

We conclude from Bertini's irreducibility theorem \cite[Theorem 6.3 (4)]{jouanolou1983} that $Z_{i+1}$ is irreducible. This finishes the proof that $Z=Z_{d-2}$ is nonempty and irreducible. Writing $S$ for the image of $Z$ in $\PP^\mathbf{m}$, Theorem~\ref{thm:BK} guarantees that
\[
\int_S H_iH_j=\int_Z H_iH_j=\MV(P_1,\ldots,P_{d-2},\Delta_i,\Delta_j) \ \ \text{for all $i,j$.}
\]

In the general case, 
we show in Lemma \ref{lem:AlgebraicEquivalence} that $Z_{i} \cap D_{i+1}$ is a union of irreducible components that are algebraically equivalent to each other.
We may therefore define $Z_{i+1}$ to be any one of these irreducible components. 
Since algebraic equivalence implies homological equivalence \cite[Chapter 19]{fulton}, we have
\[
\int_S H_iH_j=\int_Z H_iH_j=\lambda^{-1} \, \MV(P_1,\ldots,P_{d-2},\Delta_i,\Delta_j) \ \ \text{for all $i,j,$}
\]
where $\lambda$ is a positive rational number.
\end{proof}

Since convex bodies $A_1,\ldots,A_{d-2} \subseteq \mathbb{R}^d$ can be approximated by rational polytopes,  Proposition~\ref{cor:oneposeigen} implies the following classical result in convex geometry.\footnote{The same argument works more generally when $\Delta_i$'s are arbitrary convex bodies. This is one of the main conclusions of Brunn--Minkowski theory, equivalent to the Alexandrov--Fenchel inequality. See Section \ref{sec4} for a more complete discussion. We refer to  \cite[Section 7.3]{Schneider} for Alexandrov's original proof.} 

\begin{corollary}\label{cor:projectionvolumes}
For any convex bodies $A_1,\ldots,A_{d-2} \subseteq \mathbb{R}^d$, the $n \times n$ symmetric matrix with entries $\MV(A_1,\ldots,A_{d-2},\Delta_i,\Delta_j)$  is Lorentzian.
\end{corollary}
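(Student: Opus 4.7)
The plan is to combine Theorem~\ref{lem:polytopesgivesurface}(1) with Corollary~\ref{cor:oneposeigen} and pass to the limit via Hausdorff approximation of convex bodies by rational polytopes. Fix any vector $\mathbf{m}=(m_1,\ldots,m_n)$ of positive integers with $\sum m_i = d$, so that $\Delta_1,\ldots,\Delta_n$ sit inside $\mathbb{R}^d$ as in the setup preceding Theorem~\ref{lem:polytopesgivesurface}. For each $k$, choose a sequence of rational polytopes $P_k^{(\ell)} \subseteq \mathbb{R}^d$ converging to $A_k$ in the Hausdorff metric.

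By Theorem~\ref{lem:polytopesgivesurface}(1), for each $\ell$ there exist $\lambda^{(\ell)} \in \mathbb{Q}_{>0}$ and an irreducible surface $S^{(\ell)} \subseteq \mathbb{P}^{\mathbf{m}}$ with
\[
\int_{S^{(\ell)}} H_i H_j \;=\; (\lambda^{(\ell)})^{-1}\,\MV\bigl(P_1^{(\ell)},\ldots,P_{d-2}^{(\ell)},\Delta_i,\Delta_j\bigr).
\]
By Corollary~\ref{cor:oneposeigen} the matrix $L(S^{(\ell)};H_1,\ldots,H_n)$ is Lorentzian, and since the class of Lorentzian matrices is visibly closed under multiplication by positive scalars, the matrix $M^{(\ell)}$ with entries $\MV(P_1^{(\ell)},\ldots,P_{d-2}^{(\ell)},\Delta_i,\Delta_j)$ is Lorentzian as well.

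Now I would invoke the continuity of mixed volume in each argument with respect to the Hausdorff metric (see \cite[Theorem 5.1.7]{Schneider}) to conclude that $M^{(\ell)}$ converges entrywise to the matrix $M$ whose entries are $\MV(A_1,\ldots,A_{d-2},\Delta_i,\Delta_j)$. Since nonnegativity of entries is closed and eigenvalues depend continuously on a symmetric matrix, having at most one positive eigenvalue is also a closed condition; hence the Lorentzian cone is closed in the space of symmetric matrices. Therefore $M$ is Lorentzian, which is exactly the desired statement.

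The argument is mostly transport: all the heavy lifting lies in Theorem~\ref{lem:polytopesgivesurface} and Corollary~\ref{cor:oneposeigen}. The remaining work—rational polytope approximation, continuity of $\MV$, and closedness of the Lorentzian cone—is standard, so I do not expect any serious obstacle; the only minor point to be careful about is that the scaling factors $\lambda^{(\ell)}$ need not converge, but this is irrelevant because each $M^{(\ell)}$ is Lorentzian on its own, independently of $\lambda^{(\ell)}$.
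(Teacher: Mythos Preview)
Your proof is correct and follows essentially the same approach as the paper, which states in one sentence that approximation by rational polytopes together with Corollary~\ref{cor:oneposeigen} (via Theorem~\ref{lem:polytopesgivesurface}) gives the result. You have simply spelled out the details of the limiting argument that the paper leaves implicit.
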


By \cite[Theorem 5.3.1]{Schneider}, 
when 
$m_i=1$ for all $i$, we have
\[
 \MV(A_1,\ldots,A_{d-2},\Delta_i,\Delta_j)=
\MV(\pi_{ij}(A_1),\ldots,\pi_{ij}(A_{d-2}))
\ \ \text{for any $i \neq j$}.
\]
Since the mixed volume is zero when $i=j$,
 condition ($1$) implies condition ($2$) in Conjecture~\ref{con:realizabilitybypolytopes}.


The remainder of this section is devoted to a general lemma used in the proof of Theorem~\ref{lem:polytopesgivesurface}.

\begin{lemma}\label{lem:AlgebraicEquivalence}
Let $f: Y\to X$ is a proper surjective morphism between irreducible algebraic varieties over an algebraically closed field. Then the irreducible components of a general fiber of $f$ are algebraiclly equivalent to each other in $Y$.
\end{lemma}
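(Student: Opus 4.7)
The plan is to combine Stein factorization of $f$ with a curve-family argument. Apply Stein factorization to obtain $f = h \circ g$, where $g \colon Y \to X'$ is projective with geometrically connected fibers and $h \colon X' \to X$ is finite and surjective; since $Y$ is irreducible, so is $X'$. In characteristic zero, Stein factorization moreover forces $K(X')$ to be algebraically closed in $K(Y)$, which together with separability implies that the geometric generic fiber of $g$ is integral. In positive characteristic one first absorbs the purely inseparable part of the extension $K(Y)/K(X')$, corresponding to a universal homeomorphism on total spaces and hence harmless for algebraic equivalence. Restricting $X$ to a suitable dense open $U$, we may therefore assume that for every $x \in U$ the fiber $f^{-1}(x)$ decomposes as the disjoint union $\bigsqcup_{x' \in h^{-1}(x)} g^{-1}(x')$, in which each $g^{-1}(x')$ is an irreducible subvariety of $Y$ --- precisely the irreducible components of $f^{-1}(x)$.

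The task reduces to showing that any two such components $Z_1 = g^{-1}(x_1')$ and $Z_2 = g^{-1}(x_2')$ are algebraically equivalent in $Y$. We may assume $\dim X \geq 1$ (otherwise there is nothing to prove), so $X'$ is an irreducible projective variety of positive dimension. Choose a smooth projective irreducible curve $T$ and a morphism $\alpha \colon T \to X'$ whose image contains both $x_1'$ and $x_2'$, for instance by normalizing a general complete intersection curve in $X'$ through the two points. Form the fiber product $W \coloneq Y \times_{X'} T$, regarded as a closed subscheme of $Y \times T$ with projection $\pi \colon W \to T$. The fiber of $\pi$ over $t$ is $g^{-1}(\alpha(t))$, identified with its image in $Y$ by the first projection. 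After restricting $T$ to a dense open $T^\circ$ on which $\pi$ is flat --- and which, with $x_i'$ chosen generic, still contains lifts $t_i$ of the $x_i'$ --- the inclusion $W|_{T^\circ} \hookrightarrow Y \times T^\circ$ is a flat family of cycles on $Y$ parametrized by the smooth connected curve $T^\circ$, whose fibers at $t_1$ and $t_2$ are $Z_1$ and $Z_2$. This directly witnesses $Z_1 \sim Z_2$ in $Y$.

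The main obstacle lies in the first step: ensuring geometric irreducibility, and not merely geometric connectedness, of the general fiber of $g$. This is automatic in characteristic zero via Stein, but in positive characteristic requires the purely inseparable reduction indicated above. Once this is in place, the remainder of the argument is a uniform flat-family construction valid over any algebraically closed field.
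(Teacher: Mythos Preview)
Your strategy matches the paper's: reduce via Stein factorization to a finite part and a part with irreducible general fiber, then move components in an algebraic family. The explicit curve-family argument at the end is fine and more detailed than the paper's version.

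The gap is in the first step. You assert that Stein factorization $f=h\circ g$ forces $K(X')$ to be algebraically closed in $K(Y)$ (at least in characteristic zero), and hence that the general fiber of $g$ is irreducible. This is false when $Y$ is not normal. Take $k=\CC$, $X=\mathbb{A}^1_t$, and $Y=V(u^2-tv^2)\subset\PP^2_{[u:v:w]}\times\mathbb{A}^1_t$. Then $Y$ is integral and one checks directly that $f_*\mathcal{O}_Y=\mathcal{O}_X$, so Stein factorization is trivial ($X'=X$ and $g=f$). Yet the fiber over any $t\neq 0$ is the union of the two lines $u=\pm\sqrt{t}\,v$, which meet at $[0:0:1]$; it is connected but reducible. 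The point is that $\sqrt{t}=u/v\in K(Y)$ is algebraic over $K(X)$ but does \emph{not} lie in $H^0(Y_\eta,\mathcal{O}_{Y_\eta})=K(X')$, because it fails to extend across the singular locus $\{[0:0:1]\}\times\mathbb{A}^1$ of $Y$. The implication ``$g_*\mathcal{O}_Y=\mathcal{O}_{X'}\Rightarrow K(X')$ algebraically closed in $K(Y)$'' genuinely needs $Y$ normal; see \cite[Example~2.1.12]{Lazarsfeld1}.

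The paper repairs exactly this: it first passes to the normalization $g:Y'\to Y$, applies Stein factorization to $Y'\to X$ to obtain $Y'\xrightarrow{f'}X'\xrightarrow{g'}X$, and then uses normality of $Y'$ to conclude that the general fiber of $f'$ is irreducible. The finite map $g'$ contributes components all appearing with the same multiplicity (the purely inseparable degree), and one pushes the resulting algebraic equivalence back down along the birational map $g$. Your handling of the inseparable issue in positive characteristic is gestural, but that is secondary; the essential missing ingredient is the normalization of $Y$.
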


The same statement holds for the scheme-theoretic irreducible components of a general fiber, that is, the scheme-theoretic closures of the complements of the rest of the irreducible components within the fiber.

\begin{proof}
First, we consider the case when $f_* \mathcal{O}_Y=\mathcal{O}_X$ and $Y$ is normal.  
    By \cite{Fujita,FujitaCorrections}, the function field $K(X)$ is algebraically closed in the function field $K(Y)$, and hence $K(Y)$ is geometrically irreducible over $K(X)$ by \cite[Lemma~10.47.8]{StackProject}. 
    It follows that the generic fiber of $f$ is geometrically irreducible , and hence a general fiber of $f$ is irreducible \cite[Lemma~37.27.5]{StackProject}.


    Second, we consider the case when $f$ is finite. In this case, the assertion follows from the fact that any two points in $Y$ are algebraically equivalent in $Y$. 



We now consider the general case. 
Let $g: Y'\to Y$ be the normalization.
    Consider the composition $Y'\xrightarrow{g}Y\xrightarrow{f} X$, and denote its Stein factorization by $Y' \xrightarrow{f'}X'\xrightarrow{g'}X$. We have the commutative diagram
    \[
    \xymatrix{
    Y'\ar[d]_{g}\ar[r]^{f'}&X'\ar[d]^{g'}\\
    Y\ar[r]^{f}&X
    }
    \]
    where $g': X'\to X$ is a finite morphism and $f': Y'\to X'$ is a proper morphism satisfying $f'_{*}(\mathcal{O}_{Y'})=\mathcal{O}_{X'}$. 
Consider the closed subset
\[
V'=\{x'\in X' \,|\, \text{$f'^{-1}(x')$ is reducible or contained in the exceptional locus of $g$}\}.
\]
The analysis of the first case applies to $f'$, and we know that the image of $V'$ in $X$ is a proper closed subset of $X$. 
    By the constructibility of the relative Samuel functions \cite[Theorem 4.15]{LJT}, we know that the relative Hilbert--Samuel multiplicity is constructible.
It follows that two general fibers of $f'$ have the same \emph{geometric multiplicity}, the length of the local ring at the generic point.\footnote{Here we use the assumption that the base field is perfect, since otherwise the relative Hilbert--Samuel multiplicity is not constructibe \cite[Section 1]{Bennett}. For an example of a Stein factorization where the first factor has a non-reduced general fiber, see \cite{FujitaCorrections}.} 

Let $x$ be a general point of $X$ so that, in particular, $x$ is not in the image of $V'$ and the fiber of $g'$ over $x$ is supported on the smooth locus of $X'$.
Let $F_1,\ldots,F_k$ be the irreducible components of the fiber of $g' \circ f'$ over $x$. Since a general fiber of $g'$ consists of points of the same multiplicity and general fibers of $f'$ are irreducible and of the same multiplicity,
 the reduced schemes $F_i$ are algebraically equivalent to each other in $Y'$.
Since $g$ is birational, this shows that the irreducible components of $f^{-1}(x)$  are algebraically equivalent to each other in $Y$.
\end{proof}

\section{Projections and mixed volumes of four-dimensional bodies}\label{sec:polytopes}

To each pair of convex bodies $A$ and $B$ in $\RR^4$, we associate a vector of real numbers 
\[
A\wedge B=(p_{12},p_{13},p_{14},p_{23},p_{24},p_{34}) \in \medwedge^2\, \RR^4,
\]
where the components $p_{ij}$ are defined as the mixed volumes of the projections of $A$ and $B$. 
With this notation, the basic properties of mixed volumes read
\[
A \wedge B=B\wedge A  \ \ \text{and} \ \  \quad (\lambda_1 A_1+\lambda_2 A_2)\wedge B=\lambda_1( A_1 \wedge B )+\lambda_2(A_2\wedge B) \ \ \text{for $\lambda_1,\lambda_2 \ge 0$.}
\]
The main contributions of this section are Theorems~\ref{thm:mainreal} and ~\ref{thm:mainQpoly}, which characterize real and rational vectors of the form $A \wedge B$.
We also provide a proof of Theorem~\ref{thm:convexbody} when $\mathbf{p}$ is in $\Delta(\mathbb{T}_2)^\circ$. The proof of Theorem~\ref{thm:convexbody} when $\mathbf{p}$ is in $\partial\Delta(\mathbb{T}_2)$ will be given in the next section.

These results have direct applications to realizability questions concerning $(\PP^1)^4$, as we demonstrate in the following section.
%

The following reformulation of the Lorentzian condition for $4$-by-$4$ matrices from  \cite{BHKL} will play a critical role in the proofs of the above characterizations.

\begin{proposition}\label{prop:T2}
A point $\mathbf{p}=(p_{12},p_{13},p_{14},p_{23},p_{24},p_{34}) \in \medwedge^2\, \RR^4$ is in $\Delta(\mathbb{T}_2)$ if and only if
\[  
L(\mathbf{p})\coloneq\begin{pmatrix}
0&p_{12}&p_{13}&p_{14}\\
p_{12}&0&p_{23}&p_{24}\\
p_{13}&p_{23}&0&p_{34}\\
p_{14}&p_{24}&p_{34}&0
\end{pmatrix}
\ \ \text{is a Lorentzian matrix.}
\]
\end{proposition}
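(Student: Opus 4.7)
The plan is to match both conditions to algebraic conditions on the characteristic polynomial of $L = L(\mathbf{p})$, then finish with a short case analysis. Since $L$ has zero diagonal and nonnegative off-diagonal entries, its trace vanishes, so the eigenvalues $\lambda_1\geq\lambda_2\geq\lambda_3\geq\lambda_4$ satisfy $\sum_i\lambda_i=0$. A direct Leibniz expansion of $\det(\lambda I - L)$ further shows that $\sum_i\lambda_i^3 = 6\sum_{i<j<k}p_{ij}p_{ik}p_{jk}\geq 0$ and that $-e_2 = \sum_{i<j} p_{ij}^2 \geq 0$. The only remaining invariant is the determinant.

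The central input is a Heron-style factorization of $\det L$. Setting $\alpha=p_{12}p_{34}$, $\beta=p_{13}p_{24}$, $\gamma=p_{14}p_{23}$, the Leibniz formula yields
\[
\det L = \alpha^2+\beta^2+\gamma^2-2(\alpha\beta+\beta\gamma+\gamma\alpha),
\]
and this classical expression factors as
\[
-\det L = (\sqrt\alpha+\sqrt\beta+\sqrt\gamma)(-\sqrt\alpha+\sqrt\beta+\sqrt\gamma)(\sqrt\alpha-\sqrt\beta+\sqrt\gamma)(\sqrt\alpha+\sqrt\beta-\sqrt\gamma).
\]
By the $S_4$-symmetry of the Pl\"ucker inequalities, the twelve conditions defining $\Delta(\mathbb{T}_2)$ collapse to the three triangle inequalities among $\sqrt\alpha,\sqrt\beta,\sqrt\gamma$, which hold if and only if all four factors on the right are nonnegative, i.e., $\det L\leq 0$. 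So, assuming $p_{ij}\geq 0$, the task reduces to showing that $L$ is Lorentzian if and only if $\det L\leq 0$.

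For the forward direction: if $\lambda_2\leq 0$, then $\lambda_2,\lambda_3,\lambda_4\leq 0$ with sum $-\lambda_1\leq 0$, making $\lambda_2\lambda_3\lambda_4$ nonpositive, hence $\det L = \lambda_1\lambda_2\lambda_3\lambda_4\leq 0$. For the converse I would argue by contradiction, assuming $\lambda_2>0$ and using the cubic identity
\[
\lambda_1^3+\lambda_2^3+\lambda_3^3-(\lambda_1+\lambda_2+\lambda_3)^3 = -3(\lambda_1+\lambda_2)(\lambda_2+\lambda_3)(\lambda_1+\lambda_3).
\]
Substituting $\lambda_4=-\lambda_1-\lambda_2-\lambda_3$ rewrites $\sum_i\lambda_i^3=-3(\lambda_1+\lambda_2)(\lambda_2+\lambda_3)(\lambda_1+\lambda_3)$. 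If $\lambda_3>0$, all three pair-sums are positive, forcing $\sum_i\lambda_i^3<0$, contradicting $\sum_i\lambda_i^3\geq 0$. If $\lambda_3\leq 0$, then $\lambda_4<0$ (since $\lambda_1+\lambda_2>0$) and $\det L=\lambda_1\lambda_2\lambda_3\lambda_4\geq 0$; combined with $\det L\leq 0$ this forces $\lambda_3=0$, and the identity collapses to $\sum_i\lambda_i^3=-3\lambda_1\lambda_2(\lambda_1+\lambda_2)<0$, again a contradiction.

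The main (minor) obstacle is bookkeeping around the case split $\lambda_3>0$ versus $\lambda_3\leq 0$: a single one of $\{\det L\leq 0,\ \sum_i\lambda_i^3\geq 0\}$ is insufficient to rule out two positive eigenvalues, but the two constraints interlock cleanly to exclude it. Once this is in hand, both implications are immediate, completing the proof.
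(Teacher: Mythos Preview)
Your proof is correct and takes a genuinely different route from the paper. Both arguments hinge on the same Heron-type factorization of $\det L$, but the paper reaches the equivalence ``Lorentzian $\Leftrightarrow \det L\le 0$'' via Cauchy interlacing: the $3\times 3$ principal submatrices (zero diagonal, nonnegative off-diagonal) are automatically Lorentzian, which forces $\lambda_3\le 0$, and from there the sign of $\det L$ controls $\lambda_2$. You instead avoid interlacing entirely and use the Newton power sums, most notably $\operatorname{tr}(L^3)=\sum\lambda_i^3\ge 0$, together with the cubic identity to eliminate the possibility $\lambda_2>0$ by a direct case split. Your argument is more self-contained and, in fact, handles the edge case $\lambda_3=0$ explicitly, whereas the paper's interlacing step is terse on this point; the paper's route is shorter once one is willing to invoke interlacing as a black box.

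One small elision worth patching: when you write that the triangle inequalities hold ``if and only if all four factors on the right are nonnegative, i.e., $\det L\le 0$,'' the ``i.e.'' needs the observation that among the last three factors $-\sqrt\alpha+\sqrt\beta+\sqrt\gamma$, $\sqrt\alpha-\sqrt\beta+\sqrt\gamma$, $\sqrt\alpha+\sqrt\beta-\sqrt\gamma$ at most one can be negative, since any two of them sum to $2\sqrt\alpha$, $2\sqrt\beta$, or $2\sqrt\gamma\ge 0$. Hence nonnegativity of their product is equivalent to nonnegativity of each. This is exactly the sentence the paper supplies; once you add it, your argument is complete.
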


\begin{proof}
A direct computation reveals that 
  \begin{multline*}
      \det L(\mathbf{p})=-\left(\sqrt{p_{12}p_{34}}+\sqrt{p_{13}p_{24}}+\sqrt{p_{14}p_{23}}\right)\cdot(-\sqrt{p_{12}p_{34}}+\sqrt{p_{13}p_{24}}+\sqrt{p_{14}p_{23}})\\
      \cdot(\sqrt{p_{12}p_{34}}-\sqrt{p_{13}p_{24}}+\sqrt{p_{14}p_{23}})\cdot(\sqrt{p_{12}p_{34}}+\sqrt{p_{13}p_{24}}-\sqrt{p_{14}p_{23}}).
  \end{multline*}
  Note that any symmetric $3$-by-$3$ matrix with nonnegative off-diagonal and zero diagonal entries is Lorentzian. Therefore, by Cauchy's interlacing theorem,  $L(\mathbf{p})$ is Lorentzian if and only if the product 
\[
(-\sqrt{p_{12}p_{34}}+\sqrt{p_{13}p_{24}}+\sqrt{p_{14}p_{23}})(\sqrt{p_{12}p_{34}}-\sqrt{p_{13}p_{24}}+\sqrt{p_{14}p_{23}})(\sqrt{p_{12}p_{34}}+\sqrt{p_{13}p_{24}}-\sqrt{p_{14}p_{23}})
\]
  is nonnegative. 
  Since the sum of every two factors is nonnegative, the product is nonnegative if and only if every factor is nonnegative.
  Such nonnegativity corresponds precisely to the triangle condition defining $\Delta(\mathbb{T}_2)$. 
\end{proof}

For the remainder of this section, we write $\mathbf{k}$ for $\QQ$ or $\RR$.

\begin{definition}
We define an action
of the multiplicative group $\mathbf{k}_{>0}\times \mathbf{k}_{>0}^4$ on $\medwedge^2\, \mathbf{k}^{4}$ by
\[
(\lambda,c_1,c_2,c_3,c_4) \cdot (p_{12},p_{13},p_{14},p_{23},p_{24},p_{34})=\lambda (c_1c_2p_{12},c_1c_3p_{13},c_1c_4p_{14},c_2c_3p_{23},c_2c_4p_{24},c_3c_4p_{34}).
\]
We say that $\mathbf{p}$ and $\mathbf{q}$ are \emph{equivalent over $\mathbf{k}$} 
if $\mathbf{p}$ and $\mathbf{q}$ are in the same orbit of $\mathbf{k}_{>0}\times \mathbf{k}_{>0}^4 \rtimes S_4$.
\end{definition}

Note that the action of the diagonal $\mathbf{k}_{>0}$ is not subsummed by the action of $\mathbf{k}_{>0}^4$ if $\mathbf{k}=\QQ$.
Clearly, the action of $\mathbf{k}_{>0}\times \mathbf{k}_{>0}^4 \rtimes S_4$ preserves the sets $\Delta(\T2)$ and $\Delta^{\circ}(\T2)$. The next lemma shows that it also preserves the set of vectors of the form $A \wedge B$. 

\begin{lemma}
\label{lem:orbit}
    Let $A_1$, $A_2$, and $A$ be convex polytopes with vertices in $\mathbf{k}^4$, and let $\mathbf{p} \in \medwedge^2\, \mathbf{k}^4$.
    \begin{enumerate}[(1)]\itemsep 5pt
        \item If $\mathbf{p}$ is equivalent to $A_1\wedge A_2$ over $\mathbf{k}$, then $\mathbf{p}=B_1\wedge B_2$ for some convex polytopes $B_1$ and $B_2$ with vertices in $\mathbf{k}^4$. 
        \item If  $\mathbf{p}$ is equivalent to $A\wedge A$ over $\RR$, then $\mathbf{p}=B\wedge B$ for some convex polytope $B$ in $\mathbb{R}^4$.
    \end{enumerate}
Analogous statements hold for convex bodies $A_1$, $A_2$, and $A$ in $\RR^4$.
\end{lemma}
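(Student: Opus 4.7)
My plan is to realize each generator of the group $\mathbf{k}_{>0}\times\mathbf{k}_{>0}^4\rtimes S_4$ by an explicit operation on $\RR^4$ acting on polytopes (or general convex bodies). For $\sigma\in S_4$, I will apply the coordinate permutation $T_\sigma\colon\mathbf{e}_i\mapsto\mathbf{e}_{\sigma(i)}$ to both $A_1$ and $A_2$. Since $T_\sigma$ intertwines $\pi_{ij}$ with $\pi_{\sigma(i)\sigma(j)}$ via a planar isometry, and isometries preserve mixed area, $(T_\sigma A_1)\wedge(T_\sigma A_2)$ realizes the $S_4$-component of the action while preserving both rationality of vertices and the polytope structure.

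For $c=(c_1,\ldots,c_4)\in\mathbf{k}_{>0}^4$, I will use the diagonal scaling $D_c\colon\mathbf{e}_i\mapsto c_i\mathbf{e}_i$ on both polytopes. Since $\pi_{ij}$ intertwines $D_c$ with a $2\times 2$ diagonal map and mixed area in $\RR^2$ transforms by the determinant, applying $D_c$ (or, depending on which convention for $\pi_{ij}$ one fixes, its inverse combined with an overall factor of $\prod_i c_i$ absorbed via bilinearity of mixed volume) realizes the torus action $p_{ij}\mapsto c_ic_jp_{ij}$. Every factor introduced lies in $\mathbf{k}_{>0}$, so the transformed vertices remain in $\mathbf{k}^4$. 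For the overall scalar $\lambda\in\mathbf{k}_{>0}$, bilinearity of mixed volume gives $(\lambda A_1)\wedge A_2=\lambda\,(A_1\wedge A_2)$, absorbing $\lambda$ into a single factor. Composing the three operations produces polytopes $B_1,B_2$ with vertices in $\mathbf{k}^4$ satisfying $B_1\wedge B_2=\mathbf{p}$, which proves (1).

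For (2), the symmetric form $B\wedge B$ is preserved when $T_\sigma$ and $D_c$ are applied to a single polytope, but the overall scalar action requires $\sqrt{\lambda}\,B'$ in place of $B'$, and $\sqrt{\lambda}$ lies in $\RR_{>0}$ though not generally in $\mathbf{k}_{>0}$ — which is precisely why (2) is formulated only over $\RR$. All operations employed (coordinate permutations, diagonal scalings, Minkowski sums and dilations) preserve the class of arbitrary convex bodies, so the analogous statements for convex bodies follow from the same arguments. The main technical point, rather than a genuine obstacle, is simply matching the torus action $p_{ij}\mapsto c_ic_jp_{ij}$ on $\medwedge^2\mathbf{k}^4$ with the correct linear action on $\RR^4$ and keeping track of auxiliary scaling factors.
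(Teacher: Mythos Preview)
Your proof is correct and follows essentially the same approach as the paper: realize the $S_4$-action by coordinate permutations, the torus action by diagonal scalings $D_{c^{-1}}$ together with an overall factor $\prod_i c_i$ absorbed into one polytope via bilinearity, and the scalar $\lambda$ by dilating a single factor. The paper simply writes down the resulting explicit formulas $B_1=D_{c^{-1}}A_1$, $B_2=\lambda c_1c_2c_3c_4\,D_{c^{-1}}A_2$, and $B=(\lambda c_1c_2c_3c_4)^{1/2}D_{c^{-1}}A$, which is exactly what your recipe produces.

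One small imprecision: in part (2) you say only $\sqrt{\lambda}$ forces the restriction to $\RR$, but in fact the symmetric version also requires $\sqrt{c_1c_2c_3c_4}$, since the factor $\prod_i c_i$ must now be split evenly across the single polytope rather than loaded onto one of two. This does not affect correctness, as (2) is already stated over $\RR$, but it is worth recording as the actual reason the argument cannot be run over $\QQ$.
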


\begin{proof}
It is enough to consider the action of $\mathbf{k}_{>0}\times \mathbf{k}_{>0}^4$.
For the first statement, suppose that $\mathbf{p}=(\lambda,c_1,c_2,c_3,c_4)\cdot A_1 \wedge A_2$.
We take
\[
B_1=(c_1^{-1},c_2^{-1},c_3^{-1},c_4^{-1}) \cdot A_1
\ \ \text{and} \ \ 
B_2=\lambda c_1c_2c_3c_4 (c_1^{-1},c_2^{-1},c_3^{-1},c_4^{-1}) \cdot A_2.
\]
For the second statement, suppose that 
$\mathbf{p}=(\lambda,c_1,c_2,c_3,c_4)\cdot A\wedge A$. We take
\[
B=\lambda^{1/2}c_1^{1/2}c_2^{1/2}c_3^{1/2}c_4^{1/2}(c_1^{-1},c_2^{-1},c_3^{-1},c_4^{-1}) \cdot A.\qedhere
\]
\end{proof}

The following lemma characterizes the points in $\Delta(\mathbb{T}_2) \cap \medwedge^2\, \mathbf{k}^4$ that have at least one zero entry, up to equivalence over $\mathbf{k}$.

\begin{lemma}\label{lem:zero-entry}
If $\mathbf{p}$ is a nonzero vector in $\Delta(\mathbb{T}_2) \cap \medwedge^2\, \mathbf{k}^4$ with a zero entry, then $\mathbf{p}$ is equivalent over $\mathbf{k}$ to exactly one of the following six points: 
\begin{align*}
 (1,1,1,1,1,0), \ \ (1,1,0,1,0,0), \ \ (0,1,1,1,1,0),  \ \ (1,1,1,0,0,0),
  \ \ (1,1,0,0,0,0), \ \ (1,0,0,0,0,0).
\end{align*}
Each one of these points is of the form  $A \wedge A$ for some lattice polytope $A$.
\end{lemma}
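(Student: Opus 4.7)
The plan has three stages: (1) determine the combinatorial types of the support $B \coloneq \{ij : p_{ij} \neq 0\}$ that are forced by membership in $\Delta(\T2)$; (2) reduce any such $\mathbf{p}$ to the listed canonical form using the $\mathbf{k}_{>0}\times \mathbf{k}_{>0}^4\rtimes S_4$ action; and (3) realize each canonical form as $A\wedge A$ for an explicit lattice polytope $A$.

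For stage (1), the only nontrivial condition defining $\Delta(\T2)$ is the triangle inequality satisfied by the three matching products $\sqrt{p_{12}p_{34}}$, $\sqrt{p_{13}p_{24}}$, $\sqrt{p_{14}p_{23}}$. If at least one entry $p_{ij}$ vanishes, at least one matching product vanishes, and the triangle inequality then forces either (i) exactly one matching product is zero with the other two equal and nonzero, or (ii) all three matching products are zero. In case~(i), say $p_{12}p_{34}=0$ and $p_{13}p_{24}=p_{14}p_{23}\ne 0$, the support is $\{13,14,23,24\}$ when both $p_{12}$ and $p_{34}$ vanish (the $4$-cycle, $4$ nonzero entries), and the full set minus a single pair otherwise ($5$ nonzero entries). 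In case~(ii), each perfect matching of $[4]$ contributes at least one pair to $[4]\setminus B$, so $|B|\le 3$, and a direct enumeration of the eight possible supports containing one pair from each matching yields a single pair, a $2$-path, a $3$-star, or a $3$-triangle (up to $S_4$). Together with the orbits of smaller size, this produces exactly the six claimed support types.

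For stage (2), within each support type we solve the rescaling system $\lambda c_i c_j p_{ij}=1$ for $(i,j)\in B$ over $\mathbf{k}_{>0}$. In the $5$-nonzero case with support $\{12,13,14,23,24\}$, setting $c_1=1$ yields
\[
\lambda=\frac{p_{23}}{p_{12}p_{13}},\quad c_2=\frac{p_{13}}{p_{23}},\quad c_3=\frac{p_{12}}{p_{23}},\quad c_4=\frac{p_{12}p_{13}}{p_{14}p_{23}},
\]
and the remaining equation $\lambda c_2c_4p_{24}=1$ holds precisely because of the Pl\"ucker relation $p_{13}p_{24}=p_{14}p_{23}$ established in~(1). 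The $4$-nonzero case is analogous; for each orbit with $|B|\le 3$ the system has more unknowns than equations and admits an explicit positive solution. Since all parameters are rational expressions in positive elements of $\mathbf{k}$, they lie in $\mathbf{k}_{>0}$. The six canonical forms are pairwise inequivalent because their supports lie in distinct $S_4$-orbits.

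For stage (3), we take
\begin{align*}
A&=\conv(0,e_3,e_4)         && \text{for } (1,0,0,0,0,0),\\
A&=\conv(0,e_2+e_3,e_4)     && \text{for } (1,1,0,0,0,0),\\
A&=\conv(0,e_2,e_3,e_4)     && \text{for } (1,1,1,0,0,0),\\
A&=\conv(0,e_1+e_2+e_3,e_4) && \text{for } (1,1,0,1,0,0),\\
A&=\conv(0,e_1+e_2,e_3+e_4) && \text{for } (0,1,1,1,1,0),\\
A&=\conv(0,e_1+e_2,e_3,e_4) && \text{for } (1,1,1,1,1,0),
\end{align*}
where in each case the six projection areas are computed directly and reduce to the normalized area of a lattice triangle $\conv(0,e_i,e_j)$, which equals $1$ in the convention that the unit cube in $\RR^n$ has volume $n!$. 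The main obstacle will be stage~(1): carefully translating the three triangle inequalities into the combinatorial constraints that leave exactly six admissible support types modulo $S_4$. Once this classification is in hand, stages~(2) and~(3) are routine explicit computations.
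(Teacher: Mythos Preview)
Your proposal is correct and follows essentially the same approach as the paper: both split into the case where exactly one matching product vanishes (forcing the other two to be equal) versus all three vanish, classify the resulting support types up to $S_4$, rescale to the canonical forms, and exhibit nearly identical lattice triangles (your polytopes for $(1,1,1,0,0,0)$ and $(1,1,1,1,1,0)$ differ from the paper's only by an inconsequential extra vertex at the origin). Your treatment is in fact more explicit than the paper's in stage~(2), where you solve the rescaling system and verify consistency via the Pl\"ucker relation $p_{13}p_{24}=p_{14}p_{23}$; the paper simply declares this step ``straightforward.'' One small wording glitch: the eight size-$3$ transversals of the matchings yield only the star and the triangle, while the single pair and the $2$-path arise separately from $|B|=1,2$---your sentence runs these together---but the enumeration and the final count of six types are correct.
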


\begin{proof}
We prove the first statement. If $\mathbf{p} \in \Delta(\mathbb{T}_2)$ has a zero entry, then, either all three products $p_{12}p_{34}$, $p_{13}p_{24}$, $p_{14}p_{23}$ are zero, or exactly one of them is zero. 
In the former case, it is straightforward to verify that $\mathbf{p}$ is equivalent over $\mathbf{k}$ to one of the following four points:
\[
(1,1,0,1,0,0), \quad (1,1,1,0,0,0), \quad (1,1,0,0,0,0), \quad (1,0,0,0,0,0).
\]
In the latter case, we may assume without loss of generality that $p_{34}=0$ and thus $p_{13}p_{24} = p_{14}p_{23}$ is nonzero. Then $\mathbf{p}$ is equivalent over $\mathbf{k}$ to exactly one of the points
\[
(0,1,1,1,1,0) \ \ \text{or}\ \  (1,1,1,1,1,0),
\]
depending on whether $p_{12}$ is zero or nonzero, respectively.
This proves the first statement.

The second statement is also a straightforward computation:
\begin{align*}
&A_2 \wedge A_2=(1,1,1,1,1,0) \ \ \text{when} \ \ A_2=\text{conv}(\mathbf{e}_1+\mathbf{e}_2,\mathbf{e}_3,\mathbf{e}_4), \\
&A_3 \wedge A_3=(1,1,0,1,0,0) \ \text{when} \ \ A_3=\text{conv}(\mathbf{0},\mathbf{e}_1+\mathbf{e}_2+\mathbf{e}_3,\mathbf{e}_4), \\
&A_4 \wedge A_4=(0,1,1,1,1,0) \ \ \text{when} \ \ A_4=\text{conv}(\mathbf{0},\mathbf{e}_1+\mathbf{e}_2,\mathbf{e}_3+\mathbf{e}_4), \\
   &A_5 \wedge A_5=(1,1,1,0,0,0) \ \ \text{when} \ \ A_5=\text{conv}(\mathbf{e}_2,\mathbf{e}_3,\mathbf{e}_4),\\
   &A_6 \wedge A_6=(1,1,0,0,0,0) \ \ \text{when} \ \ A_6=\text{conv}(\mathbf{0},\mathbf{e}_2+\mathbf{e}_3,\mathbf{e}_4), \\
  &A_7 \wedge A_7=(1,0,0,0,0,0) \ \ \text{when} \ \ A_7=\text{conv}(\mathbf{0},\mathbf{e}_3,\mathbf{e}_4). \qedhere
\end{align*}
\end{proof}


We now proceed to prove Theorem~\ref{thm:convexbody}.
The first statement,  the necessity of the condition $\mathbf{p} \in \Delta(\mathbb{T}_2)$, 
directly follows from Propositions~\ref{prop:T2} and 
Corollary~\ref{cor:projectionvolumes}.
For the second statement, the sufficiency of the condition $\mathbf{p} \in \Delta(\mathbb{T}_2)^\circ$, we construct a convex polytope $A$ in $\RR^4$ satisfying $\mathbf{p}=A\wedge A$.
The construction relies on the following symmetrization lemma.

\begin{lemma}\label{lem:symmetrization}
For any vector of positive real numbers $\mathbf{p}=(p_{12},p_{13},p_{14},p_{23},p_{24},p_{34})$,
there is a point $\mathbf{q}=(q_{12},q_{13},q_{14},q_{14},q_{13},q_{12})$ equivalent to $\mathbf{p}$ over $\RR$.
\end{lemma}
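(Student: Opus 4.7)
The plan is to use only the diagonal component $\mathbb{R}_{>0}\times\mathbb{R}_{>0}^4$ of the equivalence action; no permutation from $S_4$ will be needed. The prescribed form of $\mathbf{q}$ amounts to the symmetry condition $q_{ij}=q_{i'j'}$ whenever $\{i',j'\}=[4]\setminus\{i,j\}$. After absorbing $\lambda$ into the overall scale, this reduces to the three balancing identities
\[
c_1c_2\,p_{12}=c_3c_4\,p_{34},\qquad c_1c_3\,p_{13}=c_2c_4\,p_{24},\qquad c_1c_4\,p_{14}=c_2c_3\,p_{23}.
\]

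Next, I would take logarithms and set $x_i=\log c_i$, converting the problem into a $3\times 4$ real linear system with coefficient matrix
\[
M=\begin{pmatrix} 1 & 1 & -1 & -1 \\ 1 & -1 & 1 & -1 \\ 1 & -1 & -1 & 1 \end{pmatrix}
\]
and right-hand side $\bigl(\log(p_{34}/p_{12}),\,\log(p_{24}/p_{13}),\,\log(p_{23}/p_{14})\bigr)$, which is well-defined since every $p_{ij}$ is strictly positive. A short rank computation shows that any three columns of $M$ form an invertible submatrix, so the system is consistent for every right-hand side; an explicit solution can be written down by normalizing, say, $x_4=0$ and inverting the resulting $3\times 3$ subsystem.

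Exponentiating such a solution yields positive scalars $c_1,c_2,c_3,c_4$, and choosing $\lambda=1$ the point $\mathbf{q}=(\lambda,c_1,c_2,c_3,c_4)\cdot\mathbf{p}$ has the required symmetric form. No serious obstacle is anticipated: the content of the lemma is essentially the rank statement for $M$, namely that the four-dimensional diagonal rescaling has enough freedom to simultaneously enforce the three symmetry constraints. It is worth noting that this argument uses the positivity of $\mathbf{p}$ in an essential way (to define the logarithms), which is consistent with the lemma being used subsequently only on $\Delta(\mathbb{T}_2)^\circ$.
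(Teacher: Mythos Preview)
Your proof is correct and takes essentially the same approach as the paper: both use only the $\mathbb{R}_{>0}\times\mathbb{R}_{>0}^4$ action (no permutation) to enforce the three symmetry constraints. The paper simply writes down the explicit solution $c_1=\sqrt{p_{23}p_{24}p_{34}}$, $c_2=\sqrt{p_{13}p_{14}p_{34}}$, $c_3=\sqrt{p_{12}p_{14}p_{24}}$, $c_4=\sqrt{p_{12}p_{13}p_{23}}$ with $\lambda=(p_{12}p_{13}p_{14}p_{23}p_{24}p_{34})^{-1/2}$ and verifies directly that the resulting point is $(\sqrt{p_{12}p_{34}},\sqrt{p_{13}p_{24}},\sqrt{p_{14}p_{23}},\sqrt{p_{14}p_{23}},\sqrt{p_{13}p_{24}},\sqrt{p_{12}p_{34}})$, whereas you arrive at existence via the rank computation for $M$; both are the same underlying calculation.
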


\begin{proof}
Let $\lambda$ be the reciprocal of $\sqrt{p_{12}p_{13}p_{14}p_{23}p_{24}p_{34}}$, and take
\[
c_1=\sqrt{p_{23}p_{24}p_{34}}, \ \ c_2=\sqrt{p_{13}p_{14}p_{34}}, \ \  
c_3=\sqrt{p_{12}p_{14}p_{24}}, \ \ c_4=\sqrt{p_{12}p_{13}p_{23}}.
\]
It is straightforward to check that $(\lambda, c_1,c_2,c_3,c_4) \cdot \mathbf{p}$ is equal to 
\[
 (\sqrt{p_{12}p_{34}}, \sqrt{p_{13}p_{24}}, \sqrt{p_{14}p_{23}}, \sqrt{p_{14}p_{23}}, \sqrt{p_{13}p_{24}}, \sqrt{p_{12}p_{34}}). \qedhere
\]
\end{proof}

There are exactly three ways to partition a set of four elements into two subsets of two elements each. The corresponding line segments in $\RR^4$ will be the building blocks of $A$:
\[
D_{12|34}\coloneq \text{conv}(\mathbf{e}_1+\mathbf{e}_2,\mathbf{e}_3+\mathbf{e}_4), \ 
D_{13|24}\coloneq \text{conv}(\mathbf{e}_1+\mathbf{e}_3,\mathbf{e}_2+\mathbf{e}_4), \ 
D_{14|23}\coloneq \text{conv}(\mathbf{e}_1+\mathbf{e}_4,\mathbf{e}_2+\mathbf{e}_3).
\]
A straightforward computation shows:
\begin{align*}
D_{12|34}\wedge D_{13|24}&=(2,2,0,0,2,2), \qquad D_{12|34}\wedge D_{12|34}=(0,0,0,0,0,0),\\
D_{12|34}\wedge D_{14|23}&=(2,0,2,2,0,2), \qquad D_{13|24}\wedge D_{13|24}=(0,0,0,0,0,0),\\
D_{13|24}\wedge D_{14|23}&=(0,2,2,2,2,0), \qquad D_{14|23}\wedge D_{14|23}=(0,0,0,0,0,0).
\end{align*}

\begin{proof}[Proof of Theorem~\ref{thm:convexbody} when $\mathbf{p}$ is in  $\Delta(\mathbb{T}_2)^\circ$.]
We first construct a convex polytope with projection areas given by $\mathbf{p}$. 
By Lemmas~\ref{lem:orbit} and ~\ref{lem:symmetrization}, we may assume that the given point $\mathbf{p}$ in $\Delta(\mathbb{T}_2)^\circ$ is of the form $(p_{12},p_{13},p_{14},p_{14},p_{13},p_{12})$.
By permuting the coordinates, we may suppose that $p_{12}\geq p_{13}\geq p_{14}$, which gives
\[
a\coloneq p_{12}-p_{13} \ge 0, \quad b \coloneq p_{13}-p_{14} \ge 0, \quad c \coloneq p_{13}+p_{14}-p_{12}>0.
\]
Note that $c$ is positive because  $\mathbf{p}$ is in the interior of $\Delta(\mathbb{T}_2)$. From the computation above, 
\[
\big(xD_{12|34}+yD_{13|24}+zD_{14|23}\big) \wedge  \big(xD_{12|34}+yD_{13|24}+zD_{14|23}\big)=\big(p_{12},p_{13},p_{14},p_{14},p_{13},p_{12}\big)
\]
for indeterminates $x$, $y$, $z$, translates into the system of equations
\[
xy=u\coloneq (a+b+d)/4, \quad xz=v\coloneq (a+d)/4, \quad yz=w\coloneq d/4,\quad \text{where $d=c/2>0$.}
\]
This system has a unique positive solution
\[
x=\sqrt{uv/w}, \quad y=\sqrt{uw/v}, \quad z=\sqrt{vw/u}. 
\]
This proves that there is a convex polytope $A$ such that $A \wedge A=\mathbf{p}$.

We now show that there is a smooth convex body $A$ with $A \wedge A=\mathbf{p}$.
Choose smooth families of smooth convex bodies $D_{12|34}(\epsilon),D_{13|24}(\epsilon),D_{14|23}(\epsilon)$ that converge to the line segments $D_{12|34},D_{13|24},D_{14|23}$ as $\epsilon$ goes to zero, and consider the Minkowski sum 
\[
A =xD_{12|34}(\epsilon)+yD_{13|24}(\epsilon)+zD_{14|23}(\epsilon).
\]
With $u$, $v$, $w$ defined as above, the system of equations $A \wedge A=\mathbf{p}$ translates to
\[
xy+f(x,y,z,\epsilon)=u, \quad xz+g(x,y,z,\epsilon)=v, \quad yz+h(x,y,z,\epsilon)=w,
\]
where $f$, $g$, $h$ are smooth functions satisfying 
\[
f(x,y,z,0)=g(x,y,z,0)=h(x,y,z,0)=0 \ \  \text{for all $x,y,z>0$}.
\]
We consider the function $F:\RR^4_{\ge 0}\to \RR^3$ given by
\[
F(x,y,z,\epsilon ) \coloneq (xy+f(x,y,z,\epsilon), xz+g(x,y,z,\epsilon),yz+h(x,y,z,\epsilon)).
\]
By modifying the families  $D_{12|34}(\epsilon),D_{13|24}(\epsilon),D_{14|23}(\epsilon)$ if necessary, we may suppose that $F$ extends to a smooth function on a neighborhood of the point 
\[
(\sqrt{uv/w}, \sqrt{uw/v}, \sqrt{vw/u},0) \in F^{-1}(u,v,w).
\]
For example, we may take $D_{12|34}(\epsilon)$ to be the ellipsoid of maximal volume inscribed in the $\epsilon$-neighborhood of $D_{12|34}$.\footnote{Let $C^\infty_+$ be the class of convex bodies with $C^\infty$ boundary and nonvanishing Gaussian curvature. Since $C^\infty_+$ is closed under the Minkowski sum, the resulting convex body $A$ will be in the class $C^\infty_+$.}
The leading maximal minor of the Jacobian of $F$ at the point 
is $-2\sqrt{uvw}$, which is nonzero. 
Thus, by the implicit function theorem,
 there is a germ of a smooth curve $(x(\epsilon),y(\epsilon),z(\epsilon))$ passing through the point that satisfies, for all sufficiently small $\epsilon$, 
\[
F(x(\epsilon),y(\epsilon),z(\epsilon),\epsilon)=(u,v,w).
\]
This shows that there are positive numbers $x,y,z$ and $\epsilon$ that satisfy $A \wedge A=\mathbf{p}$.
\end{proof}


We now characterize vectors of mixed areas of coordinate projections of pairs of convex bodies in $\RR^4$:

\begin{theorem}\label{thm:mainreal}
We have the equalities
\[
\Delta(\mathbb{T}_2)=\left\{A \wedge B \;\middle|\; \text{$A,B$ are convex bodies in $\RR^4$}\right\}
=\left\{A \wedge B \;\middle|\; \text{$A,B$ are convex polytopes in $\RR^4$}\right\}.
\]
\end{theorem}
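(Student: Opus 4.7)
The plan is to prove the chain of inclusions
$\Delta(\mathbb{T}_2) \supseteq \{A\wedge B : A,B \text{ convex bodies in } \RR^4\} \supseteq \{A\wedge B : A,B \text{ convex polytopes in } \RR^4\} \supseteq \Delta(\mathbb{T}_2)$, noting that polytopes are convex bodies so only the outer two inclusions require genuine work.

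For the forward inclusion, I would invoke Corollary~\ref{cor:projectionvolumes} with $d=4$, $A_1=A$, $A_2=B$, combined with the identity $\MV(A,B,\Delta_i,\Delta_j)=\MV(\pi_{ij}(A),\pi_{ij}(B))=p_{ij}$ from \cite[Theorem 5.3.1]{Schneider}. The resulting matrix is precisely $L(A\wedge B)$, which is therefore Lorentzian, so Proposition~\ref{prop:T2} places $A\wedge B$ in $\Delta(\mathbb{T}_2)$.

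For the reverse inclusion, I split $\mathbf{p} \in \Delta(\mathbb{T}_2)$ into three cases. If $\mathbf{p} \in \Delta(\mathbb{T}_2)^\circ$, the proof of Theorem~\ref{thm:convexbody}(1) already exhibits a convex polytope $A=xD_{12|34}+yD_{13|24}+zD_{14|23}$ with $A\wedge A=\mathbf{p}$, and I take $B=A$. If $\mathbf{p}$ is a nonzero boundary point with at least one zero entry, Lemma~\ref{lem:zero-entry} shows it is equivalent over $\RR$ to one of six explicit diagonals $A_i\wedge A_i$ with $A_i$ a lattice polytope, and Lemma~\ref{lem:orbit}(1) converts this into a representation $\mathbf{p}=B_1\wedge B_2$ for convex polytopes $B_1,B_2\subseteq\RR^4$. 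The case $\mathbf{p}=\mathbf{0}$ is trivial.

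The main obstacle is the remaining case: boundary points of $\Delta(\mathbb{T}_2)$ with all six entries strictly positive. Here the diagonal construction of Theorem~\ref{thm:convexbody}(1) must fail, because after the symmetrization of Lemma~\ref{lem:symmetrization} the tight triangle equality forces the system $xy=u$, $xz=v$, $yz=w$ to have $w=0$ while $u,v>0$, which admits no positive solution. To handle this case I would use Lemma~\ref{lem:symmetrization} to bring $\mathbf{p}$ to the symmetric form $(q_{12},q_{13},q_{14},q_{14},q_{13},q_{12})$, in which the boundary condition translates to $q_{12},q_{13},q_{14}$ forming a degenerate triangle; after relabeling indices I may assume $q_{12}=q_{13}+q_{14}$. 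I would then take
\[
A=\tfrac{q_{13}}{2}\,D_{13|24}+\tfrac{q_{14}}{2}\,D_{14|23}, \qquad B=D_{12|34}.
\]
A direct bilinearity computation using the table for $D_{ij|kl}\wedge D_{ik|jl}$ listed earlier in the section gives $A\wedge B=(q_{12},q_{13},q_{14},q_{14},q_{13},q_{12})$, and a final application of Lemma~\ref{lem:orbit}(1) transports this realization back to a convex polytope representation of the original $\mathbf{p}$.
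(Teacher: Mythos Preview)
Your proof is correct and follows essentially the same route as the paper's: the same invocation of Corollary~\ref{cor:projectionvolumes} and Proposition~\ref{prop:T2} for necessity, the same case split for sufficiency, and in the crucial boundary-with-all-positive-entries case the same construction $A\wedge B$ built from $D_{13|24},D_{14|23}$ and $D_{12|34}$ after symmetrizing via Lemma~\ref{lem:symmetrization} (the paper writes $(p_{13}D_{13|24}+p_{14}D_{14|23})\wedge(2^{-1}D_{12|34})$, which is your formula with the factor of $1/2$ moved to the other slot). The only minor imprecision is that the explicit $A=xD_{12|34}+yD_{13|24}+zD_{14|23}$ from the proof of Theorem~\ref{thm:convexbody} realizes the symmetrized $\mathbf{q}$ rather than $\mathbf{p}$ itself, but the transport back via Lemma~\ref{lem:orbit} that you cite fixes this.
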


\begin{proof}
As previously noted, the necessity of the condition $\mathbf{p} \in \Delta(\mathbb{T}_2)$ follows from Proposition~\ref{prop:T2} and 
Corollary~\ref{cor:projectionvolumes}.
We prove the sufficiency. 
By Theorem~\ref{thm:convexbody},  we may suppose that $\mathbf{p} \in \partial \Delta(\mathbb{T}_2)$.
By Lemma~\ref{lem:zero-entry}, we may suppose that every entry of $\mathbf{p}$ is positive.
By Lemma~\ref{lem:symmetrization}, we may suppose that $\mathbf{p}=(p_{12},p_{13},p_{14},p_{14},p_{13},p_{12})$. 
By permuting coordinates, we may suppose that $p_{12}=p_{13}+p_{14}$. Under these assumptions, we find that
\[
\big(p_{13}D_{13|24}+p_{14}D_{14|23}\big)\wedge \big( 2^{-1}D_{12|34}\big)=\big(p_{12},p_{13},p_{14},p_{14},p_{13},p_{12}\big). \qedhere
\]
\end{proof}



We now characterize vectors of mixed areas of coordinate projections of pairs of rational polytopes in $\RR^4$:

\begin{theorem}\label{thm:mainQpoly}
We have the equality
\[
\Delta(\mathbb{T}_2) \cap \medwedge^2\, \QQ^4=\left\{A \wedge B\;\middle|\; \text{$A,B$ are rational polytopes in $\RR^4$}\right\}.
\]
\end{theorem}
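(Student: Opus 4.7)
The inclusion $\supseteq$ is direct: coordinate projections of rational polytopes in $\RR^4$ are rational polygons in $\RR^2$, whose mixed areas are rational, so each $p_{ij}$ is rational. Membership in $\Delta(\mathbb{T}_2)$ then follows from Corollary~\ref{cor:projectionvolumes} combined with Proposition~\ref{prop:T2}.

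For the inclusion $\subseteq$, given $\mathbf{p}\in\Delta(\mathbb{T}_2)\cap\medwedge^2\,\QQ^4$, the plan is to mirror the proof of Theorem~\ref{thm:mainreal} while tracking rationality. If $\mathbf{p}$ has a zero entry, then Lemma~\ref{lem:zero-entry} places $\mathbf{p}$, up to $\QQ$-equivalence, in the orbit of one of six explicit points, each realized as $A\wedge A$ by a lattice polytope; Lemma~\ref{lem:orbit}(1) applied with $\mathbf{k}=\QQ$ then transports this realization to rational polytopes $B_1,B_2$ with $B_1\wedge B_2=\mathbf{p}$.

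The case where all entries of $\mathbf{p}$ are positive is more delicate, because the symmetrization of Lemma~\ref{lem:symmetrization} is not defined over $\QQ$: its scalars involve square roots of rationals. I instead exploit the extra freedom of the map $(A,B)\mapsto A\wedge B$ compared with $A\mapsto A\wedge A$. For a symmetric $\mathbf{p}=(p_{12},p_{13},p_{14},p_{14},p_{13},p_{12})$, set $\alpha=(p_{12}+p_{13}-p_{14})/4$, $\beta=(p_{12}+p_{14}-p_{13})/4$, and $\gamma=(p_{13}+p_{14}-p_{12})/4$; these are nonnegative rationals by the $\mathbb{T}_2$ triangle inequality. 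Writing $A=x_1D_{12|34}+x_2D_{13|24}+x_3D_{14|23}$ and $B=y_1D_{12|34}+y_2D_{13|24}+y_3D_{14|23}$, the bilinear expansion forces $x_1y_2+x_2y_1=\alpha$, $x_1y_3+x_3y_1=\beta$, $x_2y_3+x_3y_2=\gamma$; a choice such as $x_1=1,\ x_2=0,\ x_3=\gamma/\alpha$ and $y_1=0,\ y_2=\alpha,\ y_3=\beta$ (when $\alpha>0$, with an analogous choice on the boundary) is rational and nonnegative, so the symmetric case is resolved with rational polytopes $A,B$.

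For general non-symmetric $\mathbf{p}$ with all positive entries, I will enlarge the family of rational building blocks to also include the coordinate segments $E_i=\text{conv}(\mathbf{0},\mathbf{e}_i)$. A direct computation shows that the wedges $E_i\wedge E_j$ yield the standard basis of $\medwedge^2\,\QQ^4$, while each cross-wedge $E_i\wedge D_j$ depends only on $i$ and produces the non-symmetric vector with zeros precisely in the positions containing $i$. Writing $A=\sum_i x_iE_i+\sum_j z_jD_j$ and $B=\sum_i y_iE_i+\sum_j w_jD_j$ with nonnegative rational coefficients makes $A\wedge B$ a bilinear expression in $14$ parameters with $6$-dimensional target, giving enough freedom to realize any such $\mathbf{p}$ by solving the resulting rational polynomial system. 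The main obstacle is executing this final step: I expect it to require a moderate case analysis depending on which side of the $\mathbb{T}_2$ triangle is dominant and on whether $\mathbf{p}$ lies in the interior or the boundary of $\Delta(\mathbb{T}_2)$, together with an explicit inversion of the bilinear parameterization in each sub-case.
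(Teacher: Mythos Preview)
Your handling of the inclusion $\supseteq$ and the zero-entry case matches the paper. Your symmetric positive case is also correct: the explicit bilinear choice $(x_1,x_2,x_3)=(1,0,\gamma/\alpha)$, $(y_1,y_2,y_3)=(0,\alpha,\beta)$ does realize $(p_{12},p_{13},p_{14},p_{14},p_{13},p_{12})$ with rational polytopes and is a clean variant of the paper's Lemma~\ref{lem:symmetrcoverQ}.

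The genuine gap is the non-symmetric positive case. You correctly observe that Lemma~\ref{lem:symmetrization} does not descend to $\QQ$, but your proposed fix---enlarging to a $14$-parameter bilinear family in the $E_i$ and the $D_j$ and appealing to a dimension count---is not a proof. The image of a bilinear map restricted to the nonnegative orthant need not be convex or full, and you give no mechanism for inverting the system over $\QQ_{\ge 0}$ for an arbitrary $\mathbf{p}\in\Delta(\mathbb{T}_2)\cap\medwedge^2\,\QQ^4$; the ``moderate case analysis'' you defer is the entire content of the theorem in this range.

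The paper's route is structurally different. It \emph{linearizes} by fixing $B=D_{12|34}$ throughout. For symmetric rational $\mathbf{r}$ it realizes $\mathbf{r}=A\wedge D_{12|34}$ via Lemma~\ref{lem:symmetrcoverQ}, and for the degenerate-boundary positive case it shows directly that $\mathbf{p}$ is $\QQ$-equivalent to a symmetric point (Lemma~\ref{lem:Qdegenerate but no zero}). For the interior positive case it exhibits six lattice polytopes $R_{ij}$ with $R_{ij}\wedge D_{12|34}$ equal to the all-ones vector minus the $ij$-th coordinate vector; these span a full-dimensional cone $\mathcal{C}(\mathbf{r})$, so every rational point near a realized symmetric $\mathbf{r}$ is of the form $(A+\sum c_{ij}R_{ij})\wedge D_{12|34}$ with $c_{ij}\in\QQ_{\ge 0}$. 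Density of $\QQ_{>0}^4$ in $\RR_{>0}^4$ then places some $\QQ$-orbit representative of $\mathbf{p}$ inside $\mathcal{C}^\circ(\mathbf{r})$, and Lemma~\ref{lem:orbit} finishes. If you want to rescue your approach, you need a comparable device that replaces the bilinear inversion by a linear one.
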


The main technical challenge is the absence of Lemma~\ref{lem:symmetrization} for $\mathbf{k}=\QQ$.
We begin by analyzing the symmetric case.

\begin{lemma}\label{lem:symmetrcoverQ}
If $\mathbf{p}=(p_{12},p_{13},p_{14},p_{14},p_{13},p_{12}) \in \Delta(\mathbb{T}_2)$ is a vector of nonnegative rational numbers 
satisfying $p_{12} \ge p_{13} \ge p_{14}$, then there is a rational polytope $A$ such that
$\mathbf{p}=A\wedge D_{12|34}$.
\end{lemma}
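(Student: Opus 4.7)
The plan is to construct $A$ as a $\mathbb{Q}_{\ge 0}$-linear Minkowski combination of lattice polytopes and to exploit bilinearity of $\wedge D_{12|34}$ to reduce the problem to a linear system with nonnegative rational data. I would take the ansatz
\[
A \;=\; y\, D_{13|24} \;+\; z\, D_{14|23} \;+\; s\,\big(E_1 + E_2 + E_3 + E_4\big),
\]
where $E_i \coloneq \operatorname{conv}(\mathbf{0}, \mathbf{e}_i)$ (so $\sum_i E_i$ is the unit cube $[0,1]^4$), and search for nonnegative rational $y, z, s$. Any such $A$ is automatically a rational polytope, being a $\mathbb{Q}_{\ge 0}$-Minkowski combination of lattice polytopes.

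By bilinearity, $A \wedge D_{12|34}$ is determined by the elementary wedges with $D_{12|34}$. The two diagonal wedges $D_{13|24} \wedge D_{12|34} = (2,2,0,0,2,2)$ and $D_{14|23} \wedge D_{12|34} = (2,0,2,2,0,2)$ were recorded earlier in the section. A direct projection-by-projection computation gives $E_1 \wedge D_{12|34} = (0,0,0,1,1,1)$, and the remaining three $E_i \wedge D_{12|34}$ are obtained by acting with the stabilizer of $D_{12|34}$ in $S_4$ (generated by $(1\,2)$, $(3\,4)$, and $(1\,3)(2\,4)$); their sum collapses to the uniform vector $(E_1+E_2+E_3+E_4)\wedge D_{12|34} = (2,2,2,2,2,2)$. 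The ansatz therefore yields
\[
A \wedge D_{12|34} \;=\; \big(2y+2z+2s,\; 2y+2s,\; 2z+2s,\; 2z+2s,\; 2y+2s,\; 2y+2z+2s\big),
\]
which already has the required symmetric form $(p_{12},p_{13},p_{14},p_{14},p_{13},p_{12})$, giving the unique solution
\[
s \;=\; \tfrac{1}{2}(p_{13}+p_{14}-p_{12}), \qquad y \;=\; \tfrac{1}{2}(p_{12}-p_{14}), \qquad z \;=\; \tfrac{1}{2}(p_{12}-p_{13}).
\]
Nonnegativity of $y$ and $z$ is immediate from the hypothesis $p_{12} \ge p_{13} \ge p_{14}$, and nonnegativity of $s$ is the triangle inequality $p_{13}+p_{14} \ge p_{12}$, which follows from $\mathbf{p} \in \Delta(\mathbb{T}_2)$: under the symmetric identifications $p_{34}=p_{12}$, $p_{24}=p_{13}$, $p_{23}=p_{14}$, the three $\mathbb{T}_2$-triangle inequalities defining $\Delta(\mathbb{T}_2)$ collapse to the ordinary triangle inequalities on $p_{12},p_{13},p_{14}$. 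All coefficients are rational because the $p_{ij}$ are.

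The main conceptual obstacle is the choice of building blocks. Using only $D_{13|24}$ and $D_{14|23}$---essentially the approach underlying the quadratic $A \wedge A$ construction in the preceding proof---reaches only the codimension-one slice $p_{12} = p_{13}+p_{14}$; the unit edges $E_i$ are introduced precisely to absorb the triangle-inequality slack $p_{13}+p_{14}-p_{12}$ without breaking the four-fold symmetry required of the output. Equally important is replacing the quadratic equation $A\wedge A = \mathbf{p}$ by the bilinear one $A\wedge D_{12|34} = \mathbf{p}$: the quadratic formulation forces square roots of rational data (as already visible in the proof of Theorem~\ref{thm:convexbody}), which generically destroys rationality, whereas the linearized version admits rational solutions whenever $\mathbf{p}$ is rational.
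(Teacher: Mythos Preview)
Your proof is correct and follows essentially the same strategy as the paper: both exploit bilinearity of $\,\cdot\wedge D_{12|34}$ to reduce to a $3\times 3$ linear system whose nonnegativity constraints are exactly the ordering $p_{12}\ge p_{13}\ge p_{14}$ and the triangle inequality $p_{13}+p_{14}\ge p_{12}$. The only cosmetic difference is the choice of the third building block producing the uniform vector $(2,2,2,2,2,2)$: the paper uses $R=\conv(\mathbf{e}_1,\mathbf{e}_2)+\conv(\mathbf{e}_3,\mathbf{e}_4)$, whereas you use the unit cube $E_1+E_2+E_3+E_4$.
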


\begin{proof}
We consider the lattice polytopes
\[
P \coloneq D_{13|24}+D_{14|23}, \quad
Q \coloneq D_{13|24}, \quad
R\coloneq \text{conv}(\mathbf{e}_1,\mathbf{e}_2)+\text{conv}(\mathbf{e}_3,\mathbf{e}_4). 
\]
These polytopes give the following projection mixed volumes
\[
P\wedge D_{12|34} =(4,2,2,2,2,4),\quad
Q\wedge D_{12|34} =(2,2,0,0,2,2), \quad
R\wedge D_{12|34} =(2,2,2,2,2,2).
\]
Therefore, $x\coloneq (p_{12}-p_{13})/2 \ge 0$, $y \coloneq (p_{13}-p_{14})/2 \ge 0$, $z \coloneq (p_{13}+p_{14}-p_{12})/2 \ge 0$ satisfy 
\[
\big(xP+yQ+zR\big)\wedge D_{12|34}=\big(p_{12},p_{13},p_{14},p_{14},p_{13},p_{12}\big). \qedhere
\]
\end{proof}

We continue with an analysis of the boundary case.

\begin{lemma}\label{lem:Qdegenerate but no zero}
If $\mathbf{p}=(p_{12},p_{13},p_{14},p_{23},p_{24},p_{34}) \in \partial\Delta(\mathbb{T}_2)$ is a vector of positive rational numbers,
then 
there is a  vector
$\mathbf{q}=(q_{12},q_{13},q_{14},q_{14},q_{13},q_{12}) \in \partial\Delta(\mathbb{T}_2)$ of positive rational numbers  equivalent to 
$\mathbf{p}$ over $\QQ$.
\end{lemma}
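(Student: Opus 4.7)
The plan is to realize the equivalence directly by finding rational positive scalars $c_1, c_2, c_3, c_4$ such that $\mathbf{q} \coloneq (c_i c_j p_{ij})_{i<j}$ is symmetric in the sense $q_{12}=q_{34}$, $q_{13}=q_{24}$, $q_{14}=q_{23}$ (so one may take $\lambda = 1$). These three symmetry conditions constitute the system
\[
\frac{c_1 c_2}{c_3 c_4} = \frac{p_{34}}{p_{12}},\qquad
\frac{c_1 c_3}{c_2 c_4} = \frac{p_{24}}{p_{13}},\qquad
\frac{c_1 c_4}{c_2 c_3} = \frac{p_{23}}{p_{14}}.
\]
Forming pairwise products, its unique positive solution with $c_1 = 1$ is
\[
c_2 = \sqrt{\frac{p_{13} p_{14}}{p_{23} p_{24}}},\qquad
c_3 = \sqrt{\frac{p_{12} p_{14}}{p_{23} p_{34}}},\qquad
c_4 = \sqrt{\frac{p_{12} p_{13}}{p_{24} p_{34}}},
\]
and a direct substitution confirms that these values satisfy all three original equations.

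The main obstacle, and the heart of the proof, is verifying that these three square roots are rational. Using the $S_4$-component of the equivalence, one may permute indices so that the degeneracy of $\mathbf{p}$ takes the form $\sqrt{p_{12}p_{34}} = \sqrt{p_{13}p_{24}} + \sqrt{p_{14}p_{23}}$. Squaring this identity yields
\[
2\sqrt{p_{13}p_{14}p_{23}p_{24}} = p_{12}p_{34} - p_{13}p_{24} - p_{14}p_{23} \in \QQ,
\]
so $p_{13}p_{14}p_{23}p_{24}$ is a rational square. Rewriting the same identity in the forms $\sqrt{p_{12}p_{34}} - \sqrt{p_{13}p_{24}} = \sqrt{p_{14}p_{23}}$ and $\sqrt{p_{12}p_{34}} - \sqrt{p_{14}p_{23}} = \sqrt{p_{13}p_{24}}$ and squaring gives in the same way that $p_{12}p_{13}p_{24}p_{34}$ and $p_{12}p_{14}p_{23}p_{34}$ are rational squares. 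Since $c_2 = \sqrt{p_{13}p_{14}p_{23}p_{24}}/(p_{23}p_{24})$ and analogously for $c_3, c_4$, each $c_i$ is a positive rational.

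Finally, $\mathbf{q}$ has positive rational entries of the required symmetric form, and since the $\QQ_{>0} \times \QQ_{>0}^4 \rtimes S_4$-action preserves both $\Delta(\mathbb{T}_2)$ and its interior $\Delta(\mathbb{T}_2)^\circ$, it also preserves the boundary, so $\mathbf{q} \in \partial\Delta(\mathbb{T}_2)$. Thus the only nontrivial input is the rationality of the three square roots, and this is exactly what the degenerate triangle equality on the boundary supplies.
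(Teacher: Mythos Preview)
The proposal is correct and follows essentially the same approach as the paper: both use that squaring the degenerate triangle equality forces the relevant cross-products $p_{ij}p_{kl}$ to be rational squares, which is exactly what is needed to make the symmetrizing scalars $c_i$ rational. The only cosmetic difference is that the paper first normalizes to $p_{14}=p_{23}=1$ via the $\QQ_{>0}^4$-action before computing, whereas you solve for $c_2,c_3,c_4$ directly in terms of the original $p_{ij}$; the underlying idea is identical.
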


\begin{proof}
Without loss of generality, we may assume $p_{14}=p_{23}=1$ and
\[
\sqrt{p_{12}p_{34}}+1=\sqrt{p_{13}p_{24}}.
\]
Taking the square, we see that $\sqrt{p_{12}p_{34}}$ is rational, and hence, so is $\sqrt{p_{13}p_{24}}$. 
Take 
\[
c_1=\sqrt{p_{12}p_{34}}\sqrt{p_{13}p_{24}}, \ \  c_2=p_{13}\sqrt{p_{12}p_{34}}, \ \  
c_3=p_{12}\sqrt{p_{13}p_{24}}, \ \  c_4=p_{12}p_{13}.
\]
It is straightforward to check that $(c_1^{-1}c_4^{-1},c_1,c_2,c_3,c_4) \cdot \mathbf{p}$ is equal to 
\[
(\sqrt{p_{12}p_{34}}, \sqrt{p_{13}p_{24}}, 1, 1, \sqrt{p_{13}p_{24}}, \sqrt{p_{12}p_{34}}). \qedhere
\]
\end{proof}




We conclude this section with the proof of Theorem \ref{thm:mainQpoly}.

\begin{proof}[Proof of Theorem \ref{thm:mainQpoly}]

 If $\mathbf{p}$ has at least one zero entry, the existence follows from Lemma~\ref{lem:zero-entry}. 
 If $\mathbf{p}$ has only positive entries and satisfies a degenerate triangle inequality, the existence follows from Lemmas~\ref{lem:symmetrcoverQ} and ~\ref{lem:Qdegenerate but no zero}.

 Suppose $\mathbf{p}$ has only positive entries and satisfies the nondegenerate triangle inequalities.
 By Lemma~\ref{lem:symmetrization}, we have $\mathbf{p}$ is equivalent to $\mathbf{q}=(q_{12}, q_{13},q_{14}, q_{14}, q_{13}, q_{12})$ over $\RR$, where
\[
q_{12}=\sqrt{p_{12}p_{34}}, \quad q_{13}=\sqrt{p_{13}p_{24}}, \quad q_{14} =\sqrt{p_{14}p_{23}}.
\]
Without loss of generality, we may suppose that $q_{12} \ge q_{13} \ge q_{14}$.

 Choose rational polytopes $R_{12},R_{13},R_{14},R_{23},R_{24},R_{34}$ in $\RR^4$ such that
\begin{align*}
R_{12}\wedge D_{12|34} &=(0,1,1,1,1,1), \qquad R_{23}\wedge D_{12|34}=(1,1,1,0,1,1), \\
R_{13}\wedge D_{12|34} &=(1,0,1,1,1,1), \qquad R_{24}\wedge D_{12|34}=(1,1,1,1,0,1), \\
R_{14}\wedge D_{12|34} &=(1,1,0,1,1,1), \qquad R_{34}\wedge D_{12|34}=(1,1,1,1,1,0).
\end{align*}
For example, we may choose the convex polytopes
\begin{align*}
 R_{12} &=\text{conv}(\mathbf{e}_1+\mathbf{e}_2, \mathbf{e}_3+\mathbf{e}_4, \mathbf{e}_1, \mathbf{0}),
 \ \ \quad
 R_{23} =\text{conv}(\mathbf{e}_1+\mathbf{e}_2, \mathbf{e}_3+\mathbf{e}_4, \mathbf{e}_3,\mathbf{e}_4),
 \\
 R_{13} &=\text{conv}(\mathbf{e}_1+\mathbf{e}_2, \mathbf{e}_3+\mathbf{e}_4, \mathbf{e}_2,\mathbf{e}_4), \quad
 R_{24} =\text{conv}(\mathbf{e}_1+\mathbf{e}_2,\mathbf{e}_3+\mathbf{e}_4, \mathbf{e}_1,\mathbf{e}_3), \\
 R_{14} &=\text{conv}(\mathbf{e}_1+\mathbf{e}_2, \mathbf{e}_3+\mathbf{e}_4, \mathbf{e}_2,\mathbf{e}_3),
   \quad 
R_{34} =\text{conv}(\mathbf{e}_1+\mathbf{e}_2, \mathbf{e}_3+\mathbf{e}_4,  \mathbf{e}_3, \mathbf{0}).
\end{align*}
For any point $\mathbf{v}\in \medwedge^2\, \RR^4$, we define a cone  centered at $\mathbf{v}$ by
\[
\mathcal{C}(\mathbf{v}) = \left\{\mathbf{v}+\sum_{ij}c_{ij} R_{ij}\wedge D_{12|34}\;\middle|\; c_{ij}\ge 0\right\},
\]
and denote its interior by $\mathcal{C}^\circ(\mathbf{v})$.
We write $S$ for the subspace of $\medwedge^2\, \RR^4$ consisting of points 
\[
(a_1,a_2,a_3,a_3,a_2,a_1) \ \ \text{for} \ \ a_1,a_2,a_3 \in \RR.
\]

Take $\mathbf{q}_{-} =\mathbf{q}-\varepsilon (1,1,1,1,1,1)$, where $\varepsilon>0$ is chosen small enough so that $\mathbf{q}_{-}$ has positive entries and satisfies the nondegenerate triangle inequalities. Clearly, $\mathbf{q}$ is in  $\mathcal{C}^\circ(\mathbf{q}_{-})$.

Since $\mathcal{C}(\mathbf{q}_-)$ is a full-dimensional cone whose interior intersects $S$, there is a rational point arbitrarily close to $\mathbf{q}_{-}$ in the intersection $\mathcal{C}^\circ(\mathbf{q}_{-})\cap S$. 
Thus, there is a rational point 
\[
\mathbf{r}=(r_{12}, r_{13}, r_{14}, r_{14}, r_{13}, r_{12}) \ \ \text{satisfying} \ \  r_{12} \ge r_{13} \ge r_{14} \ \  \text{and} \ \   \mathbf{q} \in \mathcal{C}^\circ_{\mathbf{r}}.
\]
By Lemma~\ref{lem:symmetrcoverQ}, we have $\mathbf{r}=A \wedge D_{12|34}$ for some rational polytope $A$. 
Thus, any rational point in $\mathcal{C}_\mathbf{r}$ is of the form
\[
\big(A+c_{12} R_{12}+c_{13}R_{13}+c_{14}R_{14}+c_{23}R_{23}+c_{24}R_{24}+c_{34}R_{34}\big)\wedge  D_{12|34}
\]
for some nonnegative rational numbers $c_{ij}$.
Since
 $\QQ^4_{>0}$ is dense in  $\RR_{>0}^4$, there is a point  arbitrarily close to $\mathbf{q}$ that is equivalent to $\mathbf{p}$ over $\QQ$.
Clearly, we may take this rational point to be in $\mathcal{C}^\circ_{\mathbf{r}}$.
 We conclude by Lemma \ref{lem:orbit}.
\end{proof}

\section{Realizing the boundary of $\Delta(\mathbb{T}_2)$}
\label{sec4}
In this section, we finish the proof of Theorem~\ref{thm:convexbody}. We construct a family of convex bodies $A \subseteq \RR^4$ such that $A \wedge A$ sweeps out $\partial\Delta(\mathbb{T}_2) \simeq S^4$ and show that any such $A$ necessarily has a singular point in its boundary.
By Lemma~\ref{lem:zero-entry}, we may suppose that the given vector in $ \partial\Delta(\mathbb{T}_2)$ has all entries positive.
By Lemmas~\ref{lem:orbit} and ~\ref{lem:symmetrization}, we may suppose that the given vector is of the form $(s+1,s,1,1,s,s+1)$ for some $s \ge 1$. We will give an explicit description of the vertices of a convex polytope $A$ as a function of $s$.

Finding a convex polytope $A$ satisfying $A \wedge A=(s+1,s,1,1,s,s+1)$ is nontrivial. Nevertheless, once the vertices of $A$ are correctly guessed as a function of $s$, verifying that $A$ has the required property is straightforward, as demonstrated below.
 In the remainder of this section, we show how recent breakthroughs on the equality conditions of the Alexandrov--Fenchel inequality in \cite{MinkowskiEquality,AFEquality}  lead to a specific set of vertices of $A$.

\begin{proposition}\label{lem: T2boundry}
For any $s \ge 1$, there is a convex polytope $A \subseteq \RR^4$ such that 
\[
A \wedge A =(s+1,s,1,1,s,s+1).
\]
\end{proposition}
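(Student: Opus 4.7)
The plan is to construct an explicit family of convex polytopes $A_s \subseteq \RR^4$ indexed by $s \ge 1$, and verify directly that the six coordinate projection areas of $A_s$ equal $(s+1, s, 1, 1, s, s+1)$. The base case $s=1$ is already in hand: the polytope
\[
A_1 = \conv(\mathbf{e}_1, \mathbf{e}_2, \mathbf{e}_3, \mathbf{e}_4, \mathbf{e}_1+\mathbf{e}_2, \mathbf{e}_3+\mathbf{e}_4)
\]
satisfies $A_1\wedge A_1=(2,1,1,1,1,2)$, as one checks by projecting its six vertices into each coordinate $2$-plane and applying the shoelace formula. The task is to interpolate this to general $s \ge 1$.

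It is important to note that the zonotope ansatz $A=xD_{12|34}+yD_{13|24}+zD_{14|23}$ that drives the interior construction in Section~\ref{sec:polytopes} \emph{fails} on the boundary. Bilinearity of the wedge, together with the identities $D_{ij|kl}\wedge D_{ij|kl}=0$ and the three pairwise wedge vectors recorded in that section, reduces $A\wedge A=(s+1,s,1,1,s,s+1)$ to the linear system
\[
4xy+4xz=s+1,\qquad 4xy+4yz=s,\qquad 4xz+4yz=1
\]
in the unknowns $xy,xz,yz$. Summing the three equations gives $xy+xz+yz=(s+1)/4$, while the first equation alone already equals $(s+1)/4$, forcing $yz=0$. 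Hence no positive solution exists, and a qualitatively different combinatorial type of polytope is needed on $\partial\Delta(\mathbb{T}_2)$---a reflection, in the world of convex bodies, of the degenerate equality $\sqrt{p_{12}p_{34}}=\sqrt{p_{13}p_{24}}+\sqrt{p_{14}p_{23}}$.

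My approach therefore proceeds in three steps. First, propose explicit vertices for $A_s$ as functions of $s$, guided by two inputs: the Klein four-group symmetry $p_{12}=p_{34}$, $p_{13}=p_{24}$, $p_{14}=p_{23}$ of the target (which suggests constructing $A_s$ invariant under a suitable action of this group on coordinates, up to translation), and the recent equality characterizations of the Alexandrov--Fenchel inequality in \cite{MinkowskiEquality,AFEquality}, which---via Proposition~\ref{prop:T2}---constrain both the combinatorial type of $A_s$ and the location of its vertices when the Lorentzian inequality degenerates. Second, for each pair $i<j$, project the chosen vertices to the $(k,l)$-plane and identify $\pi_{ij}(A_s)$ as an explicit convex polygon. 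Third, apply the shoelace formula to each polygon and verify that the six normalized areas are exactly $(s+1,s,1,1,s,s+1)$.

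The main obstacle is the first step: guessing the right vertices. Once this is done, the remaining verification reduces to elementary planar area computations, which can be substantially shortened by using the target's symmetries to reduce to three independent projections out of the six. The subsequent discussion in the paper presumably both exhibits such vertices and motivates their form via the AF equality classification, after which only the direct shoelace check remains.
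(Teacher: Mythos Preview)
Your strategy matches the paper's exactly: rule out the zonotope ansatz, then exhibit explicit vertices depending on $s$ and verify the six projection areas by direct planar computation. Your analysis of why the Minkowski sum $xD_{12|34}+yD_{13|24}+zD_{14|23}$ cannot work on $\partial\Delta(\mathbb{T}_2)$ is correct and is precisely the reason a new combinatorial type is needed.

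The gap is that you have written a plan rather than a proof. The entire content of this proposition is the vertex data, and you explicitly defer that step (``the paper presumably both exhibits such vertices and motivates their form''). The paper does exactly what you anticipate: it sets $c=\sqrt{1-1/s}$, writes down two explicit $4\times 16$ integer matrices $L$ and $M$, takes $A$ to be the convex hull of the columns of $L+cM$, and then checks by hand that the projection areas are $8-4c^2,\,4,\,4-4c^2,\,4-4c^2,\,4,\,8-4c^2$, which after a global scaling gives $(s+1,s,1,1,s,s+1)$. The motivation via Shenfeld--van Handel's equality characterization comes \emph{after} the proof and leads to a $16$-vertex family for all $s\ge 1$; in particular the paper does not interpolate from your $6$-vertex base polytope $A_1$, so your proposed ``deform $A_1$'' heuristic would not by itself land on the right combinatorial type.
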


\begin{proof}
Take $c=\sqrt{1-1/s}$, and
consider the $4 \times 16$ matrices
\[
\begin{array}{r@{\;}c@{\;}l}
L &\coloneq& 
\left[
\begin{array}{*{20}r}
 \phantom{-}0 & -1 &  0 & -1 & -1 &  0 &  0 &  \phantom{-}0 &  \phantom{-}1 &  \phantom{-}1 &  1 &  0 &  0 & -1 &  1 &  0 \\
 0 & -1 &  0 &  1 &  1 &  0 &  0 &  0 &  1 &  1 & -1 &  0 &  0 & -1 & -1 &  0 \\
 1 &  0 & -1 &  0 &  0 & -1 & -1 &  1 &  0 &  0 &  0 &  1 & -1 &  0 &  0 &  1 \\
 1 &  0 & -1 &  0 &  0 &  1 &  1 &  1 &  0 &  0 &  0 & -1 & -1 &  0 &  0 & -1
\end{array}
\right], 
\\[3em]
M &\coloneq& 
\left[
\begin{array}{*{20}r}
  0 &  \phantom{-} 0 &  \phantom{-}0 &  0 &  1 &  0 & -1 & -1 & -1 &  0 &  0 &  1 &  1 &  1 & -1 &  0 \\
 -2 &  0 &  0 & -2 & -1 &  0 & -1 & -1 & -1 & -2 &  0 & -1 & -1 & -1 & -1 & -2 \\
  0 &  0 &  0 &  0 & -1 &  0 &  1 & -1 & -1 &  0 &  0 & -1 &  1 &  1 &  1 &  0 \\
 -2 &  0 &  0 &  0 & -1 & -2 & -1 & -1 & -1 & -2 & -2 & -1 & -1 & -1 & -1 &  0
\end{array}
\right].
\end{array}
\]
Let $A$ be the convex hull of the $16$ columns of $L+cM$ in $\RR^4$.
With patience, one can check that
\[
   |\pi_{12}(A)|=|\pi_{34}(A)|=8-4c^2, \quad |\pi_{13}(A)|=|\pi_{24}(A)|=4,\quad |\pi_{14}(A)|=|\pi_{23}(A)|=4-4c^2,
\]
where $|\pi_{ij}(A)|$ is the normalized volume in  $\RR^2$.
We have
\[
A \wedge A \sim \Bigg(\frac{2-c^2}{1-c^2},\frac{1}{1-c^2},1,1,\frac{1}{1-c^2},\frac{2-c^2}{1-c^2}\Bigg)=(s+1,s,1,1,s,s+1). \qedhere
\]
\end{proof}

We now explain how we obtained the vertices of $A$. 
Let $\Delta_i$ be the line segment joining $\mathbf{0}$ and the standard basis vector $\mathbf{e}_i$.
The main observation here is that $A \wedge A =(s+1,s,1,1,s,s+1)$ corresponds to the equality case 
\[
\MV(A,A,\Delta_1+\Delta_2,\Delta_3+\Delta_4)^2=\MV(A,A,\Delta_1+\Delta_2,\Delta_1+\Delta_2)
\MV(A,A,\Delta_3+\Delta_4,\Delta_3+\Delta_4),
\]
because both sides are equal to $4(s+1)^2$.
Unlike many classical geometric inequalities, which typically admit simple equality characterizations, the Alexandrov--Fenchel inequality has a rich family of equality cases that depends on the relative position of sigular boundary points on the convex bodies \cite[Section 7.6]{Schneider}. Two recent breakthroughs of Shenfeld--van Handel in \cite{MinkowskiEquality} and \cite{AFEquality}  settled respectively the special case of Minkowski inequality for arbitrary convex bodies and the general case of Alexandrov--Fenchel inequality for convex polytopes.

The statement we need is \cite[Theorem 1.3]{MinkowskiEquality}, which confirms Schneider's conjecture from \cite{SchneiderConjecture} in the setting of Minkowski's quadratic inequality.
    Let $P$, $Q$, and $A$ be convex bodies in $\RR^n$.
    A nonzero vector $\mathbf{u} \in \RR^n$ is called an \emph{$r$-extreme normal vector} of $A$ if there do not exist linearly independent normal vectors $\mathbf{u}_1,\ldots,\mathbf{u}_{r+2}$ at a boundary point of $A$ such that $\mathbf{u}=\mathbf{u}_1+\cdots+\mathbf{u}_{r+2}$.

\begin{theorem}[Shenfeld--van Handel]\label{thm:equalityAF}
Suppose that $A$ is $n$-dimensional and $\MV(A,\dots,A,Q,Q)$ is positive. In this case, 
\begin{equation*}
    \MV(A,\dots,A,P,Q)^2=\MV(A,\dots,A,P,P)\, \MV(A,\dots,A,Q,Q)
\end{equation*}
holds if and only if  there are $\lambda \ge 0$ and $\mathbf{t}\in\mathbb R^n$ such that $P$ and $\lambda Q+\mathbf{t}$ have the same supporting hyperplanes in all $1$-extreme normal directions of $A$.
\end{theorem}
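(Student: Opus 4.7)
The plan is to treat the equality case of Minkowski's quadratic inequality as a saturation problem for a Lorentzian bilinear form on support functions. By the Alexandrov--Fenchel inequality, which is a direct consequence of Corollary~\ref{cor:projectionvolumes} combined with Lorentzian linear algebra in the style of Proposition~\ref{prop:T2}, the symmetric bilinear form $B_A(h_P, h_Q) \coloneq \MV(A,\ldots,A,P,Q)$ has at most one positive eigendirection on the cone of support functions. Under the hypothesis $\MV(A,\ldots,A,Q,Q) > 0$, the asserted equality is the reverse Cauchy--Schwarz saturation for such a Lorentzian form, and standard linear algebra forces $h_P - \lambda h_Q$ to lie in $\ker B_A$ modulo the positive direction spanned by $h_Q$. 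The problem thus reduces to identifying $\ker B_A$ geometrically.

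First I would dispose of the easy direction: if $P$ and $\lambda Q + \mathbf{t}$ share supporting hyperplanes on all $1$-extreme normal directions of $A$, then by a theorem of Schneider the mixed surface area measure $S_{A,\ldots,A}$ is concentrated on the closure of these directions, so $\MV(A,\ldots,A, P - \lambda Q - \mathbf{t}, \cdot) \equiv 0$ by the integral representation of mixed volumes, and a direct expansion yields the equality. Next, for a smooth strongly convex $A$ one can apply Hilbert's classical elliptic argument to identify $\ker B_A$ with restrictions of linear functions; here every unit normal is $1$-extreme, so the characterization reduces to $P = \lambda Q + \mathbf{t}$, which is consistent with the claim.

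To handle the general case, I would approximate $A$ by smooth bodies $A_\varepsilon$ and track how $\ker B_{A_\varepsilon}$ grows in the limit $\varepsilon \to 0$. Additional elements of $\ker B_A$ arise from support functions that become invisible to $S_{A,\ldots,A}$, namely those whose restriction to $\mathrm{supp}(S_{A,\ldots,A})$ agrees with a linear function. By Schneider's characterization this support is exactly the closure of the $1$-extreme normal directions of $A$, which is precisely the locus appearing in the theorem.

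The main obstacle is the reverse implication in the singular regime: one must show that if $h_P - \lambda h_Q - \langle \mathbf{t}, \cdot \rangle$ fails to vanish identically on some $1$-extreme normal direction of $A$, then $B_A$ detects this as a strict inequality. This is the heart of Shenfeld--van Handel's contribution, realized by a local perturbation argument that localizes to a neighborhood of the offending direction and exploits positivity of mixed volumes of small perturbations concentrated there, producing a variation that strictly decreases the associated quadratic form. Implementing this rigorously requires fine control of the measure $S_{A,\ldots,A}$ near singular points of $\partial A$ and a careful reduction, first to polytopes and then back to general convex bodies by approximation, which is precisely where the technical difficulty of the theorem lies.
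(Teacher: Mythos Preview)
The paper does not prove this theorem. It is quoted as \cite[Theorem 1.3]{MinkowskiEquality} and used as a black box; the surrounding text explicitly attributes it to Shenfeld--van Handel and to Schneider's conjecture, and no argument is supplied. So there is no ``paper's own proof'' to compare your proposal against.

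As for your outline itself: the easy direction and the smooth case are fine and classical. The step that does real work, however, is not the approximation $A_\varepsilon \to A$ you describe. Equality cases are not preserved under such limits: for smooth $A_\varepsilon$ the kernel of $B_{A_\varepsilon}$ consists only of linear functions, and letting $\varepsilon \to 0$ tells you nothing about which nonlinear differences $h_P - \lambda h_Q$ enter $\ker B_A$ in the limit (the kernel can, and does, jump). You acknowledge this at the end by deferring the ``reverse implication in the singular regime'' to a local perturbation argument, which is exactly the content of the Shenfeld--van Handel paper. So your proposal is an accurate summary of where the difficulty lies, but it is not a proof; and since the present paper also does not attempt a proof, the honest move is simply to cite \cite{MinkowskiEquality} as the paper does.
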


When $A$ is a polytope containing the origin in its interior, $\mathbf{u}$ is a $1$-extreme normal direction if and only if there exists $c>0$ such that $c \mathbf{u} $ lies on the $1$-skeleton of  the polar dual of $A$.
The \emph{support function} of a convex body $P\subseteq \mathbb{R}^n$ is the convex function 
    \[
    h_P(\mathbf{u})\coloneq \max_{\mathbf{x}\in P} \langle \mathbf{x},\mathbf{u}\rangle.
    \]
    In terms of the support functions, the equality condition in Theorem~\ref{thm:equalityAF} is that, for some $\lambda \ge 0$ and  $\mathbf{t} \in \RR^n$, we have 
\[
h_P(\mathbf{u})=\lambda h_Q(\mathbf{u})+\langle \mathbf{t},\mathbf{u}\rangle \ \  \text{for every $1$-extreme normal direction $\mathbf{u}$ of $A$.}  
\]
Note that $\lambda$ is necessarily equal to the ratio  $\MV(A,\ldots,A,P,Q)/\MV(A,\ldots,A,Q,Q)$ because 
$$
\MV(A,\ldots,A,P,Q) = \MV(A,\ldots,A,\lambda Q+\mathbf{t},Q)=\lambda \MV(A,\ldots,A,Q,Q).$$
 When $P=\Delta_1+\Delta_2$ and $Q=\Delta_3+\Delta_4$, Theorem~\ref{thm:equalityAF} says the following.

\begin{corollary}\label{cor:equalityOURS}
If $A$ is a $4$-dimensional convex body such that $A\wedge A =(s+1,s,1,1,s,s+1)$ for $s \ge 1$,
then there exists $\mathbf{t}\in \mathbb R^4$  such that
\begin{equation*}
    \max(u_1,0)+\max(u_2,0)-\max(u_3,0)-\max(u_4,0) = \langle \mathbf{t}, \mathbf{u}\rangle
\end{equation*}
holds for all 1-extreme directions $\mathbf{u}=(u_1,u_2,u_3,u_4)$ of A.    
\end{corollary}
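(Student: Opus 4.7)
The plan is to recognize the hypothesis $A\wedge A=(s+1,s,1,1,s,s+1)$ as exactly the equality case of Minkowski's quadratic inequality for the pair $P=\Delta_1+\Delta_2$ and $Q=\Delta_3+\Delta_4$, and then read the conclusion off Theorem~\ref{thm:equalityAF}.

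First I would expand the three mixed volumes entering Minkowski's inequality. Using multilinearity together with the identity $\MV(A,A,\Delta_i,\Delta_j)=p_{ij}$ for $i\neq j$ (from \cite[Theorem 5.3.1]{Schneider}) and the vanishing $\MV(A,A,\Delta_i,\Delta_i)=0$ (because $\text{Vol}(sA+t\Delta_i)$ is linear in $t$ when $\Delta_i$ is a segment), one obtains
\begin{align*}
\MV(A,A,P,Q)&=p_{13}+p_{14}+p_{23}+p_{24}=2s+2,\\
\MV(A,A,P,P)&=2p_{12}=2(s+1),\qquad \MV(A,A,Q,Q)=2p_{34}=2(s+1).
\end{align*}
Since $(2s+2)^2=4(s+1)^2$, Minkowski's inequality holds with equality.

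The hypotheses of Theorem~\ref{thm:equalityAF} are then met ($A$ is $4$-dimensional and $\MV(A,A,Q,Q)=2(s+1)>0$), yielding $\lambda\geq 0$ and $\mathbf{t}\in\RR^4$ such that $h_P(\mathbf{u})=\lambda h_Q(\mathbf{u})+\langle \mathbf{t},\mathbf{u}\rangle$ for every $1$-extreme normal direction $\mathbf{u}$ of $A$. The scalar $\lambda$ is pinned down via the standard integral representation of mixed volume against the mixed surface area measure $S(A,A,Q;\cdot)$, whose support lies in the $1$-extreme normal directions of $A$: this gives $\MV(A,A,P,Q)=\lambda\,\MV(A,A,Q,Q)$ (after using translation invariance to kill $\mathbf{t}$), forcing $\lambda=(2s+2)/(2(s+1))=1$. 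Substituting the support functions $h_{\Delta_i}(\mathbf{u})=\max(u_i,0)$ into $h_P(\mathbf{u})=h_Q(\mathbf{u})+\langle \mathbf{t},\mathbf{u}\rangle$ and rearranging yields the claimed identity.

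The real content sits entirely in the appeal to Theorem~\ref{thm:equalityAF}: the Shenfeld--van Handel equality characterization is what converts a purely numerical identity of mixed volumes into a pointwise constraint on support functions along the $1$-extreme normal cone of $A$. The remainder is bookkeeping with mixed volumes and support functions, and I anticipate no serious obstacle once that theorem is invoked.
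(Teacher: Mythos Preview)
Your proposal is correct and follows essentially the same route as the paper: verify that $A\wedge A=(s+1,s,1,1,s,s+1)$ forces equality in Minkowski's quadratic inequality for $P=\Delta_1+\Delta_2$, $Q=\Delta_3+\Delta_4$ (the paper simply notes both sides equal $4(s+1)^2$), apply Theorem~\ref{thm:equalityAF}, and pin down $\lambda=1$. The only cosmetic difference is that the paper determines $\lambda$ by the one-line substitution $\MV(A,A,P,Q)=\MV(A,A,\lambda Q+\mathbf{t},Q)=\lambda\,\MV(A,A,Q,Q)$ rather than explicitly invoking the support of the mixed area measure, but this is the same fact phrased differently.
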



Corollary~\ref{cor:equalityOURS} poses a strong constraint on the convex bodies corresponding to a point in $\partial \Delta(\mathbb{T}_2)$. 
When $A$ is smooth, every normal direction is $1$-extreme, so no smooth convex body $A$ satisfies $A \wedge A=(s+1,s,1,1,s,s+1)$ for $s \ge 1$. When $A$ is a convex polytope, we can use Corollary~\ref{cor:equalityOURS} to determine the facet directions of $A$. 
We illustrate this for $\mathbf{t}=(\tfrac{1}{2},\tfrac{1}{2},-\tfrac{1}{2},-\tfrac{1}{2})$.

\begin{lemma}\label{lem:tequal1/2}
    Let $B$ be a convex polytope in $\RR^4$ containing the origin in its interior. Suppose that all of its vertices and edges are contained in the hypersurface
    \begin{equation*}
   \mathscr{T}\coloneq \Big\{\mathbf{u} \in\mathbb R^4\setminus 0 \mid \max(u_1,0)+\max(u_2,0)-\max(u_3,0)-\max(u_4,0) = \tfrac{1}{2}(u_1+u_2-u_3-u_4)\Big\}.
\end{equation*}
Then $B$ has at most 16 vertices, each lying on a ray generated by a column of the matrix
\[
N\coloneq \left[
\begin{array}{*{20}r}
 1 & \phantom{-} 1 & \phantom{-} 1 & \phantom{-} 1 & \phantom{-} 0 & \phantom{-} 0 & \phantom{-} 0 & \phantom{-} 0 & -1 & -1 & -1 & -1 &  \phantom{-}0 & \phantom{-} 0 & \phantom{-} 0 & \phantom{-} 0\\
 0 &  0 &  0 &  0 &  1 &  1 &  1 &  1 &  0 &  0 &  0 &  0 & -1 & -1 & -1 & -1\\
 1 &  0 & -1 &  0 &  1 &  0 & -1 &  0 &  1 &  0 & -1 &  0 &  1 &  0 & -1 &  0\\
 0 &  1 &  0 & -1 &  0 &  1 &  0 & -1 &  0 &  1 &  0 & -1 &  0 &  1 &  0 & -1
\end{array}
\right].
\]
\end{lemma}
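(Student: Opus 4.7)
The plan is to classify each vertex $v$ of $B$ according to $|Z(v)|$, where $Z(v)\coloneq\{i:v_i=0\}$, and to rule out every value other than $|Z(v)|=2$ with one zero in each of $\{1,2\}$ and $\{3,4\}$. Writing $g(\mathbf{u})\coloneq|u_1|+|u_2|-|u_3|-|u_4|$ so that $\mathscr{T}=\{g=0\}\setminus\{\mathbf{0}\}$, the easy cases are disposed of quickly. If $|Z(v)|\in\{3,4\}$, then either $v=\mathbf{0}$ (forbidden since the origin is interior to $B$) or $g(v)=\pm|v_i|\neq 0$. If $|Z(v)|=2$ with both zeros in $\{1,2\}$ or both in $\{3,4\}$, then $g(v)$ is a strictly signed sum of the remaining absolute values, vanishing only when $v=\mathbf{0}$. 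In the remaining subcase of $|Z(v)|=2$, the relation $|v_k|=|v_l|$ for the two nonzero coordinates $v_k$ ($k\in\{1,2\}$) and $v_l$ ($l\in\{3,4\}$) exhibits $v$ as a positive multiple of $\mathrm{sign}(v_k)\,\mathbf{e}_k+\mathrm{sign}(v_l)\,\mathbf{e}_l$, which is a column of $N$.

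The central task is to exclude $|Z(v)|\le 1$, and I would handle it with a supporting-hyperplane argument. Consider first $|Z(v)|=1$; by symmetry suppose $v_1=0$ and set $\epsilon_i\coloneq\mathrm{sign}(v_i)$ for $i\in\{2,3,4\}$. For any edge direction $d$ at $v$ and any sufficiently small $t>0$, the coordinates $v_i+td_i$ with $i\ge 2$ retain the signs $\epsilon_i$, while the first coordinate contributes $|td_1|=t|d_1|$ to $g$. Using $g(v)=0$, a first-order expansion gives
\[
g(v+td)=t\bigl(|d_1|+\epsilon_2 d_2-\epsilon_3 d_3-\epsilon_4 d_4\bigr),
\]
so that the edge hypothesis $v+td\in\mathscr{T}$ forces $\phi(d)\coloneq\epsilon_2 d_2-\epsilon_3 d_3-\epsilon_4 d_4\le 0$. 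Since the tangent cone $T_v B$ is the convex conic hull of the edge directions of $B$ at $v$, it is contained in the half-space $\{\phi\le 0\}$; combined with $\phi(v)=0$ and the inclusion $B\subseteq v+T_v B$, this gives $B\subseteq\{\phi\le 0\}$. But $\phi$ is a nonzero linear functional and the origin lies in the interior of $B$, so a small ball around the origin in $B$ contains points with $\phi>0$, a contradiction. The case $|Z(v)|=0$ is handled identically with $\phi(\mathbf{u})=\epsilon_1 u_1+\epsilon_2 u_2-\epsilon_3 u_3-\epsilon_4 u_4$, the direction condition now reading $\phi(d)=0$.

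The step I expect to require the most care is converting the local hypothesis ``edges lie in $\mathscr{T}$'' into the global conclusion $B\subseteq\{\phi\le 0\}$, which relies on the standard polytope fact that $T_v B$ is generated as a convex cone by the edge directions at $v$. Once every vertex of $B$ is placed on one of the 16 rays $\RR_{>0}\cdot\mathbf{n}$ for $\mathbf{n}$ a column of $N$, the counting is immediate: because $B$ is convex with the origin in its interior, every line through the origin meets $\partial B$ at exactly two antipodal points, so each of the 16 rays contains at most one vertex.
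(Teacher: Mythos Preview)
Your proof is correct. The paper does not actually give a proof of this lemma: it simply declares the statement ``straightforward, given the piecewise linear structure of $\mathscr{T}$'' and refers to a picture of $\mathscr{T}$ in an affine chart. Your supporting-hyperplane argument via the tangent cone $T_vB$ is a clean and complete way to fill in what the paper leaves implicit.

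A couple of minor remarks. First, your reformulation $\mathscr{T}=\{|u_1|+|u_2|=|u_3|+|u_4|\}\setminus\{0\}$ is exactly right and makes the case analysis transparent. Second, in the case $|Z(v)|=0$ you in fact get $\phi(d)=0$ for every edge direction, which forces $T_vB\subseteq\ker\phi$; since $T_vB$ is full-dimensional for a vertex of a full-dimensional polytope, this is already a contradiction without invoking the origin --- but your version via $B\subseteq\{\phi\le 0\}$ works equally well. Finally, the ``at most one vertex per ray'' step is justified more simply than you wrote: a ray from an interior point meets $\partial B$ in exactly one point, so two vertices cannot share a ray.
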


The proof is straightforward, given the piecewise linear structure of $\mathscr{T}$, shown below in Figure~\ref{fig:3d-surface-plot}. 
\begin{figure}[htbp]
  \centering
  \includegraphics[trim={2cm 2.2cm 2cm 1.6cm},clip, width=0.5\textwidth]{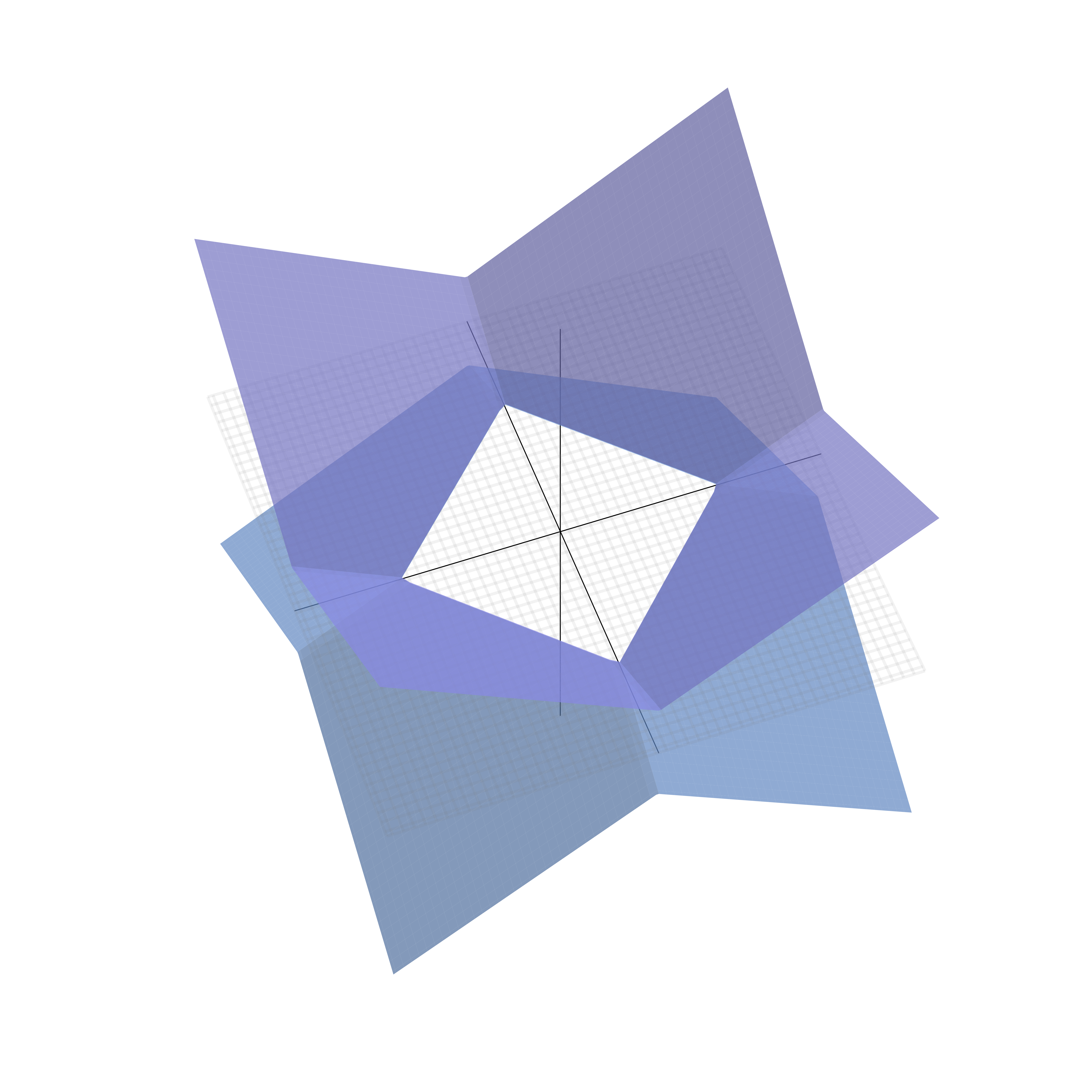}
  \caption{The hypersurface $\mathscr{T}$ in the affine chart $(u_1/u_4,u_2/u_4,u_3/u_4)$}
  \label{fig:3d-surface-plot}
\end{figure}
 Applying  
 Lemma~\ref{lem:tequal1/2} to the polar dual $B$ of $A$, we see that $A$ must be of the form
\[
    A=\bigcap_{i=1}^{16} \Big\{\mathbf{x} \in \mathbb R^4\mid  \langle \mathbf{x},  \mathbf{v}_i\rangle \geq a_i\Big\},
    \]
where $\mathbf{v}_i$ are the column vectors of the matrix $N$ and $a_1,\dots,a_{16}$ are some undetermined numbers.  
The area of the projection $\pi_{ij}(A)$ is a function of the 16 variables $a_i$. 
The combinatorial type of $A$ subdivides $\mathbb R^{16}$ into a finite number of polyhedral regions, and in each region, the area of $\pi_{ij}(A)$ is a quadratic polynomial in the variables $a_i$.

In principle, one can compute the subdivision of $\RR^{16}$ and the corresponding piecewise quadratic polynomial. For our purposes, it is enough to consider the following $1$-parameter family
 $A=A(c)$ given by the vector
\[
(a_1,\ldots,a_{16})=-(1,1,1,1,1,1+2c,1,1-2c,1,1+2c,1+2c,1,1,1,1-2c,1-2c), \ \ c \in [0,1].
\]
One can check that its vertices are given by the matrices $L$ and $M$ in  Proposition~\ref{lem: T2boundry}, and
\[
   |\pi_{12}(A)|=|\pi_{34}(A)|=8-4c^2, \quad |\pi_{13}(A)|=|\pi_{24}(A)|=4,\quad |\pi_{14}(A)|=|\pi_{23}(A)|=4-4c^2.
\]
    The method of finding $A$ does not depend on the specific choice of $\mathbf{t}$ or on the assumption that $A$ is a convex polytope. This provides a way to classify all convex bodies $A$ such that $A\wedge A=\mathbf{p}$ for a fixed $\mathbf{p}\in \partial \Delta(\mathbb{T}_2)$.
    



\section{Realizable classes in $(\PP^1)^4$}\label{Section4}

For $\mathbf{p}=(p_{12},p_{13},p_{14},p_{23},p_{24},p_{34}) \in \medwedge^2\, \QQ^4$, we write $\eta(\mathbf{p})$ for the homology class
\[
p_{12}[\PP^1 \times \PP^1 \times \PP^0 \times \PP^0]
+\cdots +p_{34}[\PP^0 \times \PP^0 \times \PP^1 \times \PP^1] \in \H_4((\PP^1)^4,\mathbb{Q}).
\] 
Theorem~\ref{thm:mainQ} states that $\eta(\mathbf{p})$
is realizable over $\QQ$ if and only if $\mathbf{p}\in \Delta(\mathbb{T}_2)$.

\begin{proof}[Proof of Theorem~\ref{thm:mainQ}]
We only need to collect the  pieces we already have.
The necessity of the condition $\mathbf{p} \in \Delta(\mathbb{T}_2)$ follows from Corollary~\ref{cor:oneposeigen} and Proposition~\ref{prop:T2}.
The sufficiency of the condition $\mathbf{p} \in \Delta(\mathbb{T}_2)$  follows from Theorem~\ref{lem:polytopesgivesurface} Theorem~\ref{thm:mainQpoly}.
\end{proof}

As noted before, $\mathbf{p}\in \Delta(\mathbb{T}_2) \cap \medwedge^2\, \mathbb{Z}^4$ does not imply that
$\eta(\mathbf{p})$ is realizable over $\mathbb{Z}$.

\begin{example}\label{exm:nonrepbut multiple is}
Consider the point $\mathbf{p}=(1,1,1,1,1,4) \in \partial\Delta(\mathbb{T}_2)$.
Proposition~\ref{cor:necconditions} below shows that $\eta=\eta(\mathbf{p})$ is not realizable over $\ZZ$. 
On the other hand, consider the lattice polytope
\[
P\coloneq \text{conv}(2\mathbf{e}_1,2\mathbf{e}_2,2\mathbf{e}_1+2\mathbf{e}_2,\mathbf{e}_3,\mathbf{e}_4,\mathbf{e}_3+\mathbf{e}_4) \subseteq \RR^4,
\]
which has projection areas $(2,2,2,2,2,8)$. Therefore,  by Theorem~\ref{lem:polytopesgivesurface}, $2\eta$ is realizable over $\ZZ$. 
In fact, by Theorem~\ref{thm:realization over Z}, the class $p \eta$ is realizable over $\ZZ$ for any prime number $p$.
\end{example}

In Theorem~\ref{thm:realization over Z}, we prove a more precise version of Theorem~\ref{thm:mainQ}:
If $\mathbf{p}$ is a primitive integral vector in $\Delta(\mathbb{T}_2)$ and $p$ is a prime number, then $\eta(\mathbf{p})$ is realizable over $\ZZ$ or $p\eta(\mathbf{p})$ is realizable over $\ZZ$.\footnote{An integral vector is said to be \emph{primitive} if its entries are coprime, that is, when the greatest common divisor of the entries is $1$.  In contrast to the other arguments in this paper, the proof of this statement requires the base field to have characteristic zero.} 
 


\begin{proposition}\label{cor:necconditions}
If $\eta(\mathbf{p})$ is realizable over $\ZZ$
and $p_{ij}> 0$, then $p_{kl}\leq p_{ik}p_{jl}+p_{il}p_{jk}$. 
\end{proposition}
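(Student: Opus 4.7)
By permuting indices, it suffices to treat the case $\{i,j\}=\{1,2\}$ and $\{k,l\}=\{3,4\}$, so the hypothesis becomes $p_{12}>0$ and the target inequality is $p_{34}\le p_{13}p_{24}+p_{14}p_{23}$. Let $S\subseteq(\PP^1)^4$ be an irreducible surface with $[S]=\eta(\mathbf{p})$. The plan is to realize $S$ as an irreducible component of a proper intersection of two hypersurfaces pulled back from $(\PP^1)^3$, and to extract the inequality by comparing the $[\PP^1_{34}]$-coefficients of the intersection product and of $[S]$.

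For $e\in\{3,4\}$, let $\varphi_e\colon(\PP^1)^4\to(\PP^1)^3$ denote the projection that forgets the $e$-th coordinate. Since $\pi_{12}=\mathrm{pr}_{12}\circ\varphi_e$ for both values of $e$, the assumption $p_{12}=\deg(\pi_{12}|_S)>0$ forces $\pi_{12}(S)=(\PP^1)^2$, whence $\varphi_e(S)$ is two-dimensional and $\varphi_e|_S$ is generically finite of some degree $d_e\ge 1$ onto an irreducible divisor $S_e\subseteq(\PP^1)^3$. Expanding
\[
\text{PD}([S])=p_{12}H_3H_4+p_{13}H_2H_4+p_{14}H_2H_3+p_{23}H_1H_4+p_{24}H_1H_3+p_{34}H_1H_2,
\]
the pushforward $(\varphi_e)_*$ keeps exactly the terms containing $H_e$ and erases that factor, producing classes on $(\PP^1)^3$ equal to $d_e[S_e]$. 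Indexing hyperplane classes on $(\PP^1)^3$ by the remaining coordinates, I obtain
\[
d_3[S_3]=p_{24}H_1+p_{14}H_2+p_{12}H_4 \quad\text{and}\quad d_4[S_4]=p_{23}H_1+p_{13}H_2+p_{12}H_3.
\]

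Both $\varphi_3^{-1}(S_3)$ and $\varphi_4^{-1}(S_4)$ are irreducible divisors in $(\PP^1)^4$ containing $S$. They can share an irreducible component only by coinciding, in which case the common divisor would be invariant under both $\varphi_3$ and $\varphi_4$, hence a pullback from $(\PP^1)^2$ in coordinates $1,2$; but since such a pullback would contain $S$, it would force $\pi_{12}(S)\subsetneq(\PP^1)^2$, contradicting $p_{12}>0$. Therefore the two divisors meet properly, so $S$ appears as an irreducible component of the intersection cycle, which decomposes as $m[S]+[Z]$ for some integer $m\ge 1$ and some effective class $[Z]$. Multiplying the two divisor classes above and using $H_a^2=0$, the coefficient of $H_1H_2=\text{PD}([\PP^1_{34}])$ in the product is $(p_{13}p_{24}+p_{14}p_{23})/(d_3 d_4)$. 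Comparing $[\PP^1_{34}]$-coefficients yields
\[
p_{34}\;\le\; m\,p_{34}\;\le\; \frac{p_{13}p_{24}+p_{14}p_{23}}{d_3 d_4}\;\le\; p_{13}p_{24}+p_{14}p_{23}.
\]
The main technical point is the proper-intersection claim; once that is in hand, the rest is a direct computation in $\H^\bullet((\PP^1)^4,\mathbb{Z})$.
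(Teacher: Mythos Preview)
Your proof is correct and follows essentially the same approach as the paper: project along the third and fourth coordinates, pull the resulting divisors back, show they are distinct so that $S$ is a component of a proper intersection, and compare the $H_1H_2$-coefficient. Your justification that the two pullbacks cannot coincide is slightly more detailed than the paper's (which simply asserts ``they are necessarily distinct because $p_{12}>0$''), but the underlying idea is the same.
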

 
Note that $\sqrt{p_{ij}p_{kl}}\leq \sqrt{p_{ik}p_{jl}}+\sqrt{p_{il}p_{jk}}$
implies  $p_{kl}\leq p_{ik}p_{jl}+p_{il}p_{jk}$ when  $p_{ij}>1$.
For $i=1$ and $j=2$, the points 
\[
(0,0,1,0,1,1), (0,0,0,1,1,1), (0,0,0,0,1,1), (0,0,0,0,0,1) \in \partial \Delta(\mathbb{T}_2)
\]
fail to satisfy both the assumption and the conclusion of Proposition~\ref{cor:necconditions}. 
The corresponding homology classes are realizable over $\mathbb{Z}$ by Lemma~\ref{lem:zero-entry} and Theorem~\ref{lem:polytopesgivesurface}. 
When $\mathbf{p} \in \Delta(\mathbb{T}_2)^\circ$, we know no other obstructions to the realizability of $\eta(\mathbf{p})$ over $\mathbb{Z}$ than the ones from Theorem~\ref{thm:mainQ} and Proposition~\ref{cor:necconditions}.

As before, for $E \subseteq [4]$, we write $\varphi_E$ for the coordinate projection $(\mathbb{P}^1)^{4} \to (\mathbb{P}^1)^{4-|E|}$ that forgets the coordinates labelled by $E$.
Following the usual abuse of notation, we use the same symbol $[V]$ for the Poincar\'e dual of the homology fundamental class $[V]$. 

\begin{proof}
Let $S$ be an irreducible surface in $(\PP^1)^4$ with  fundamental class $\eta(\mathbf{p})$. 
We show that 
$p_{34}\leq p_{13}p_{24}+p_{14}p_{23}$ when  $p_{12}> 0$. 
Since $p_{12}> 0$,
the coordinate projections 
$\varphi_3(S)$ and $\varphi_4(S)$ are irreducible hypersurfaces in $(\PP^1)^3$,  and their cohomology classes satisfy
\[
d_3[ \varphi_3(S)]=p_{24}H_1+p_{14}H_2+p_{12}H_4, \quad
d_4[ \varphi_{4}(S)]=p_{23}H_1+p_{13}H_2+p_{12}H_3,
\]
where $d_i$ is the degree of the map $S \to \varphi_i(S)$.
The inverse images
$\varphi_3^{-1}\varphi_3(S)$
and $\varphi_4^{-1}\varphi_4(S)$ are irreducible hypersurfaces in $(\PP^1)^4$, and they are necessarily distinct because $p_{12}>0$.
Thus $\varphi_3^{-1}\varphi_3(S)$
and $\varphi_4^{-1}\varphi_4(S)$ intersect generically transversally, and 
\begin{multline*}
d_3d_4[ \varphi_3^{-1}\varphi_3(S) \cap \varphi_4^{-1}\varphi_4(S)]=(p_{24}H_1+p_{14}H_2+p_{12}H_4)(p_{23}H_1+p_{13}H_2+p_{12}H_3)\\
=(p_{13}p_{24}+p_{14}p_{23})H_1H_2+p_{12}(p_{24}H_1H_3+p_{23}H_1H_4+p_{14}H_2H_3+p_{13}H_2H_4+p_{12}H_3H_4).
\end{multline*}
Since $S$ is an irreducible component of the intersection of  $\varphi_3^{-1}\varphi_3(S)$
and $\varphi_4^{-1}\varphi_4(S)$,
\[
\text{deg}\Big(\varphi_{12}:S \to (\PP^1)^2\Big) \le \text{deg}\Big(\varphi_{12}: \varphi_3^{-1}\varphi_3(S) \cap \varphi_4^{-1}\varphi_4(S)\to (\PP^1)^2\Big).
\]
Thus, by Poincar\'e duality, the above computation gives
\[
p_{34} \le d_3d_4p_{34} \le p_{13}p_{24}+p_{14}p_{23}.\qedhere
\]
\end{proof}

\begin{remark}
    A complete characterization of the classes in $\H_4(\PP^2\times \PP^2, \ZZ)$  realizable over $\ZZ$ is given in \cite{Huh13}. 
\end{remark}

Consider now the homology classes of surfaces in $(\PP^1)^4$ obtained by intersecting classes of hypersurfaces. We refer to these as \emph{complete intersection classes}.

\begin{definition}
Let $\Delta(\mathbb{T}_1)$ be the set of $(p_{12},p_{13},p_{14},p_{23},p_{24},p_{34}) \in \medwedge^2\,\RR^4$ such that
\[
p_{ij} \ge 0 \ \ \text{and} \ \ 
p_{ij}p_{kl}+ p_{ik}p_{jl} \ge p_{il}p_{jk} \ \  \text{for any $i,j,k,l$.}
\]
\end{definition}

We use $\Delta(\mathbb{T}_1)$ to characterize the complete intersection classes in $(\PP^1)^4$, up to a positive rational multiple:

\begin{theorem}\label{thm:CI}
For any nonzero $\mathbf{p} \in \medwedge^2\,\QQ^4$, some positive rational multiple of $\eta(\mathbf{p})$ is a complete intersection class if and only if $\mathbf{p} \in \Delta(\mathbb{T}_1)$.
\end{theorem}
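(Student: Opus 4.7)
The plan is to handle the easy direction by a direct computation and the harder direction by a case analysis plus an explicit rational parametrization. For $(\Leftarrow)$, suppose $\lambda \eta(\mathbf{p}) = (\sum_i a_i H_i)(\sum_j b_j H_j)$ with $a_i, b_j \in \ZZ_{\ge 0}$ and $\lambda \in \QQ_{>0}$. Since $H_i^2 = 0$ in $\H^\bullet((\PP^1)^4, \QQ)$, matching coefficients gives $\lambda p_{kl} = a_i b_j + a_j b_i$ whenever $\{i,j\} = [4] \setminus \{k,l\}$. A direct expansion yields
\[
\lambda^2 \bigl(p_{12} p_{34} + p_{13} p_{24} - p_{14} p_{23}\bigr) = 2\bigl(a_1 a_4 b_2 b_3 + a_2 a_3 b_1 b_4\bigr) \ge 0,
\]
and the two remaining triangle inequalities defining $\Delta(\mathbb{T}_1)$ follow by permuting indices.

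For $(\Rightarrow)$, the starting observation is that the $\QQ_{>0}^4 \rtimes S_4$-equivalence on $\mathbf{p}$ lifts to a coordinate-wise rescaling of $(a, b)$, so the family of complete intersection classes (up to positive rational scaling) is preserved under equivalence over $\QQ$. If $\mathbf{p}$ has a zero entry, Lemma~\ref{lem:zero-entry} reduces it to one of six standard representatives, each of which admits an explicit realization as an intersection class; for example, $(1,1,1,1,1,0) = (H_1 + H_2 + H_3)(H_3 + H_4)$ and $(0,1,1,1,1,0) = (H_1 + H_2)(H_3 + H_4)$, with analogous expressions covering the remaining four cases (two of which require multiplication by $2$).

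If every $p_{ij} > 0$, I would apply the $\QQ_{>0}^4$-action with $c = (1, p_{34}^{-1}, p_{24}^{-1}, p_{23}^{-1})$ to put $\mathbf{p}$ in the form $\mathbf{p}' = (A, B, C, 1, 1, 1)$, where $A = p_{12}/(p_{23} p_{24})$, $B = p_{13}/(p_{23} p_{34})$, $C = p_{14}/(p_{24} p_{34})$ are positive rationals, and the inequalities defining $\Delta(\mathbb{T}_1)$ become the three triangle inequalities $A+B \ge C$, $A+C \ge B$, $B+C \ge A$. Imposing $a'_3 = 0$ collapses the quadratic system $a'_i b'_j + a'_j b'_i = \lambda p'_{kl}$ into one solvable by linear algebra; the explicit solution
\[
a' = (1,\, C,\, 0,\, A), \qquad b' = \bigl(A+C-B,\; C(A+B-C),\; 2AC,\; A(B+C-A)\bigr)
\]
with $\lambda = 2AC$ realizes $\lambda \mathbf{p}'$, and non-negativity of $b'$ is precisely the three triangle inequalities. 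Pulling back through the normalization via $(\tilde a_i, \tilde b_i) = (a'_i/c_i, b'_i/c_i)$ then yields a rational complete intersection realization of some positive rational multiple of $\eta(\mathbf{p})$.

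The principal obstacle is the interior case: naive symmetric choices (such as $a = b$ or $a_i = 1$ for all $i$) cover only a proper subset of $\Delta(\mathbb{T}_1)$, while fully generic parametrizations introduce square roots and thus fail over $\QQ$. The crucial trick is that imposing the single constraint $a'_3 = 0$ breaks the symmetry in exactly the right way to render the remaining equations rationally solvable, with the triangle inequalities of $\Delta(\mathbb{T}_1)$ supplying precisely the non-negativity required for effectivity.
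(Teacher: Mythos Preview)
Your argument is correct and rests on the same key observation as the paper's: once one coordinate of $a$ is set to zero, the bilinear system $a_ib_j+a_jb_i=\lambda q_{ij}$ becomes linear in $b$, and the nonnegativity of $b$ reduces exactly to the three $\Delta(\mathbb{T}_1)$ inequalities. The paper carries this out without normalizing, exhibiting directly the identity
\[
(2q_{14}q_{24}q_{34})\sum_{i<j}q_{ij}H_iH_j=(q_{14}H_1+q_{24}H_2+q_{34}H_3)\bigl(q_{14|23}q_{14}H_1+q_{13|24}q_{24}H_2+q_{12|34}q_{34}H_3+2q_{14}q_{24}q_{34}H_4\bigr),
\]
where $q_{ij}=p_{kl}$ for complementary indices and $q_{ij|kl}=q_{ik}q_{jl}+q_{il}q_{jk}-q_{ij}q_{kl}\ge 0$; the residual cases where every triple product $q_{il}q_{jl}q_{kl}$ vanishes are dispatched by a short direct analysis rather than via Lemma~\ref{lem:zero-entry}. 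Your normalize-then-pull-back route reaches an equivalent factorization after relabeling.

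One point deserves tightening. Read literally against the paper's action $p_{kl}\mapsto d_kd_lp_{kl}$, the claim that the equivalence ``lifts to a coordinate-wise rescaling of $(a,b)$'' fails: matching it against $(a_i,b_i)\mapsto(c_ia_i,c_ib_i)$ would force $c_i^2=d_jd_kd_l/d_i$, generally irrational. What is true, and what your argument actually uses, is that the rescaling on $(a,b)$ induces $q_{ij}\mapsto c_ic_jq_{ij}$, and this action has the same orbits on rays in $\medwedge^2\,\QQ^4$ as the paper's $\QQ_{>0}\times\QQ_{>0}^4$ (take $c_i=1/d_i$); hence complete-intersection-up-to-multiple is indeed preserved under equivalence. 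With that reading your $c=(1,p_{34}^{-1},p_{24}^{-1},p_{23}^{-1})$ (acting on the $q$-coordinates) and pullback $\tilde a_i=a'_i/c_i$ are internally consistent. Two cosmetic slips: your labels $(\Leftarrow)$ and $(\Rightarrow)$ are swapped, and among the six zero-entry representatives only $(1,1,1,0,0,0)$ actually needs multiplication by $2$ (via $(H_2+H_3+H_4)^2$).
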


    It is not true that $\eta(\mathbf{p})$ is alwyas a complete intersection class for any $\mathbf{p}\in \Delta(\mathbb{T}_1) \cap \medwedge^2\, \mathbb{Z}^4$. 
    For example,  $\eta(1,1,1,1,1,1)$ is not a complete intersection class, while $\eta(2,2,2,2,2,2)$ is a complete intersection class.

\begin{proof}
It will be useful to work with $\H^\bullet((\PP^1)^4,\QQ)$ instead of $\H_\bullet((\PP^1)^4,\QQ)$. We set
\[
 (q_{34},q_{24},q_{23},q_{14},q_{13},q_{12})\coloneq (p_{12},p_{13},p_{14},p_{23},p_{24},p_{34}),
\]
and, for any distinct indices $i,j,k,l$, define
\[
q_{ij|kl}\coloneq q_{ik}q_{jl}+q_{il}q_{jk}-q_{ij}q_{kl}=p_{ik}p_{jl}+p_{il}p_{jk}-p_{ij}p_{kl}. 
\]

Suppose $S$ is a complete intersection surface in $(\PP^1)^4$ obtained by intersecting  hypersurfaces $A$ and $B$. 
There are nonnegative integers $a_i$ and $b_i$ such that
\[
[ A]=a_1H_1+a_2H_2+a_3H_3+a_4H_4, \quad 
[ B]=b_1H_1+b_2H_2+b_3H_3+b_4H_4,
\]
If $[S]$ is equal to $\eta(\mathbf{p})$, then
\[
[ S]=\sum_{i<j} q_{ij}H_iH_j=\sum_{i<j} (a_ib_j+a_jb_i)H_iH_j.
\]
Therefore, we obtain, for example, 
\[
p_{14}p_{23}+p_{13}p_{24}-p_{12}p_{34}=
q_{14}q_{23}+q_{13}q_{24}-q_{12}q_{34}=2(a_1a_2b_3b_4+b_1b_2a_3a_4)\geq 0.
\]    
This proves the necessity of the condition $\mathbf{p} \in \Delta(\mathbb{T}_1)$.

For the converse, suppose that $\mathbf{p}$ is an integral point in  $\Delta(\mathbb{T}_1)$.
We use the following factorization in 
 $\H^4((\PP^1)^4,\mathbb{Q})$:
\begin{align*}
 &(2q_{14}q_{24}q_{34}) \cdot  (q_{12} H_1 H_2+q_{13} H_1 H_3+q_{14} H_1 H_4+q_{23} H_2 H_3+q_{24} H_2 H_4+q_{34} H_3 H_4)=\\& \quad (q_{14}H_1+q_{24}H_2+q_{34}H_3)\cdot\left(q_{14|23}q_{14}H_1+q_{13|24}q_{24}H_2+q_{12|34}q_{34}H_3+2q_{14}q_{24}q_{34}H_4\right).
\end{align*}
 The coefficients $q_{ij}$ and $q_{ij|kl}$ are nonnegative because $\mathbf{p} \in \Delta(\mathbb{T}_1)$.
Therefore, if 
$q_{14}q_{24}q_{34}$
is nonzero, then 
$2q_{14}q_{24}q_{34}\,\eta(\mathbf{p})$ 
is a complete intersection class.

 It remains to consider the cases when 
 $q_{il}q_{jl}q_{kl}=0$ for all $i,j,k,l$.
 Up to symmetry, the analysis reduces to two cases: 
 The case when 
 $q_{14}=q_{24}=q_{34}=0$ 
 and the case when $q_{12}=q_{34}=0$.

In the first case, we may assume  without loss of generality that 
$q_{12}>0$.
We have
\[
q_{12}\cdot(q_{12}H_1H_2+q_{13}H_1H_3+q_{23}H_2H_3)=(q_{12}H_1+q_{23}H_3)\cdot(q_{12}H_2+q_{13}H_3),
 \]
 so 
 $q_{12}\,\eta(\mathbf{p})$
 is a complete intersection class.

 In the second case,
we have $q_{13}q_{24}=q_{14}p_{23}$. Thus, we may assume that $q_{13},q_{24}, q_{23}, q_{14}$ are nonzero, since otherwise we reduce to the previous case. We have
     \[
     q_{13}\cdot (q_{13} H_1 H_3+q_{14} H_1 H_4+q_{23} H_2 H_3+q_{24} H_2 H_4)
     =(q_{13}H_1+q_{23}H_2) \cdot (q_{13}H_3+q_{14}H_4),
     \]
      so 
      $q_{13}\,\eta(\mathbf{p})$
      is a complete intersection class.
\end{proof}

\begin{remark}
Characterizing complete intersection classes in $\H_{2k}(\PP^\mathbf{m},\QQ)$ or $\H_{2k}(\PP^\mathbf{m},\RR)$ is an elementary but challenging problem.
For example, we do not know how to characterize complete intersection classes up to a positive multiple in $\H_{4}((\PP^2)^3,\RR)$.
If $\eta$ is a complete intersection surface class in $(\PP^2)^3$, then
\[
p_{12}p_{13}p_{23} \geq 4p_{11}p_{22}p_{33} \ \ \text{and} \ \ 
p_{jk}p_{ik} \geq  p_{ij}  p_{kk} \ \ \text{and} \ \
p_{ij}^2 \geq 2p_{ii}p_{jj},
\]
where
$p_{ij}=\int_S H_iH_j$ for all $i,j$.
Which other inequalities hold for complete intersection classes?
\end{remark}

\begin{remark}
Let $A_1,A_2,A_3,A_4$ be lattice polygons in $\RR^2$. 
If $f_i$ and $g_i$ are general Laurent polynomials with Newton polygon $A_i$, then the closure of the image of the map
\[
(f_1,g_1)\times(f_2,g_2)\times(f_3,g_3)\times  (f_4,g_4):(\mathbb{C}^*)^2 \longrightarrow (\PP^1)^4
\]
has the homology class given by $\lambda^{-1}(p_{12},p_{13},p_{14},p_{23},p_{24},p_{34}) \in \medwedge^2 \ZZ^4$ for some $\lambda>0$, where
\[
p_{ij}=\MV(A_i,A_j) \ \ \text{for all $i<j$.}
\]
In \cite[Theorem 3.2]{Averkov-Soprunov}, Averkov and Soprunov show that 
\[
\Delta(\mathbb{T}_1)= \left\{\mathbf{p}\in \medwedge^2\RR^4 \;\middle|\;  \text{$p_{ij}=\MV(A_i,A_j)$ for convex bodies $A_1,A_2,A_3,A_4 \subseteq \RR^2$}\right\}.
\]
This gives an additional geometric interpretation of the locus $\Delta(\mathbb{T}_1)$.
\end{remark}

\section{Realizable classes in products of projective spaces}\label{sec:manyProjectiveSpaces}

As before, let $\mathbf{m}$ be a vector of positive integers $(m_1,\ldots,m_n)$, and let  $\mathbb{P}^\mathbf{m}$ be the product of projective spaces $\prod_{i=1}^n \mathbb{P}^{m_i}$.
In this section, we prove Theorem~\ref{thm:nondegenerateSteenrod}:
A class $\eta \in \H_4(\mathbb{P}^\mathbf{m},\mathbb{Q})$ is realizable over $\QQ$ if and only if $L(\eta)$ is Lorentzian, where $L(\eta)$ is 
the $n \times n$ symmetric matrix with entries
 \[
L(\eta)_{ij}=\int_\eta H_iH_j \ \ \text{for $1 \le i \le j \le n$.}
 \]
In \cite{Sturmfels25}, the set of traceless Lorentzian matrices was identified with a massless Mandelstam region and parametrized by the future timelike cone in Minkowski space. 
To adapt this observation to the setting of Theorem~\ref{thm:nondegenerateSteenrod}, we construct in Theorem~\ref{prop:exists surface} a smooth projective surface of given Picard rank  that contains no negative curves, and on which every nonzero nef divisor is numerically equivalent to a semiample divisor.\footnote{A Cartier divisor on $X$ is said to be \emph{nef} if it intersects 
every irreducible curve in $X$ nonnegatively, and \emph{semiample} if some positive multiple of it is basepoint-free.}
In Theorem~\ref{thm:realization over Z},
we prove a more precise version of Theorem~\ref{thm:nondegenerateSteenrod} for $(\PP^1)^n$ with $2 \le n \le 11$: If $L(\eta)$ is an integral Lorentzian matrix with coprime entries and $p$ is a prime number,
then $\eta$ is realizable over $\ZZ$ or $p\eta$ is realizable over $\ZZ$.

\subsection{Prolific surfaces}\label{sec:surfaces no neg curves}

Consider the vector space $\RR^{1+d}$ with coordinate functions $x_0, x_1, \ldots, x_d$ equipped with the nondegenerate symmetric bilinear pairing
\[
\mathbf{x} \cdot \mathbf{y}=(x_0,x_1,\ldots,x_d) \cdot (y_0,y_1,\ldots,y_d)=x_0y_0-x_1y_1-\cdots-x_dy_d.
\]
The \emph{future timelike cone} $\mathcal{C}$ is the subset of vectors $\mathbf{x}$ satisfying $\mathbf{x} \cdot \mathbf{x} \geq 0$ and $x_0 \ge 0$.
The future timelike cone is a self-dual convex cone:
\[
\mathcal{C}=\left\{\, \mathbf{x} \in \RR^{1+d} \;\middle\vert\; \mathbf{x} \cdot \mathbf{y}\geq 0 \ \ \text{for all $\mathbf{y} \in \mathcal{C}$} \, \right\}.
\]
Thus, $\mathcal{C}$ consists of $\mathbf{x}$ satisfying $\mathbf{x} \cdot \mathbf{x} \geq 0$ and $\mathbf{x} \cdot \mathbf{y} \geq 0$
for any given $\mathbf{y}$ in the interior of $\mathcal{C}$.

We consider the analogous cone for a smooth projective surface $Y$. For any unexplained terms, we refer to \cite{Lazarsfeld1}. 
The intersection product defines a nondegenerate symmetric bilinear pairing on the real N\'eron--Severi space $\NS(Y)_\RR$ with exactly one positive eigenvalue. We consider the \emph{future timelike cone}
\[
\mathcal{C}(Y)\coloneq \left\{\, D \in \NS(Y)_\RR \,  \;\middle\vert\; \, \int_Y D^2 \geq 0 \ \ \text{and} \ \ \int_Y DH \ge 0 \,\right\},
\]
where $H$ is the class of a fixed ample divisor on $Y$. 
The cone $\mathcal{C}(Y)$ contains the ample cone, and it is independent of the choice of $H$.


In general, determining the cone of nef divisors on a smooth projective surface with large Picard number can be challenging. 
An important exception occurs when the surface does not contain any \emph{negative curve}, that is, a reduced and irreducible curve with negative self-intersection number.

\begin{lemma}\label{lem:C=Nef=NE}
    If $Y$ does not contain any negative curve, then the cone of nef divisors on $Y$ is equal to $\mathcal{C}(Y)$.
\end{lemma}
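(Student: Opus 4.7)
The plan is to show both containments separately, using the Hodge index theorem as the main tool.

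For the containment $\Nef(Y) \subseteq \mathcal{C}(Y)$, which does not require the hypothesis, I would argue as follows. Let $D$ be nef. Then $D \cdot H \geq 0$ holds by definition since $H$ is (numerically equivalent to) an effective curve. For the self-intersection, fix any ample class $A$ and observe that $D + \epsilon A$ is ample for all $\epsilon > 0$, so $(D+\epsilon A)^2 > 0$; letting $\epsilon \to 0$ gives $D^2 \geq 0$. Hence $D \in \mathcal{C}(Y)$.

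For the reverse containment $\mathcal{C}(Y) \subseteq \Nef(Y)$, which uses the hypothesis, the key input is the self-duality of $\mathcal{C}(Y)$ under the intersection pairing. Let me state this as the core lemma: if $D, D' \in \mathcal{C}(Y)$, then $D \cdot D' \geq 0$. The proof is a direct consequence of the Hodge index theorem: the intersection form on $\NS(Y)_\RR$ has signature $(1, \rho-1)$, so one can pick an orthogonal basis $e_0, e_1, \ldots, e_{\rho-1}$ with $e_0^2 = 1$, $e_i^2 = -1$ for $i \geq 1$, chosen so that $e_0 \cdot H > 0$. Then writing $D = d_0 e_0 + \sum_{i \geq 1} d_i e_i$ and similarly for $D'$, the conditions $D^2 \geq 0$ and $D \cdot H \geq 0$ translate into $d_0 \geq \bigl(\sum_{i \geq 1} d_i^2\bigr)^{1/2}$ (and likewise for $D'$); the claim $d_0 d_0' \geq \sum_{i \geq 1} d_i d_i'$ then follows from the ordinary Cauchy--Schwarz inequality applied to $(d_1, \ldots, d_{\rho-1})$ and $(d_1', \ldots, d_{\rho-1}')$.

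Granted self-duality, the direction $\mathcal{C}(Y) \subseteq \Nef(Y)$ is immediate. Take any $D \in \mathcal{C}(Y)$ and any irreducible curve $C \subseteq Y$. By the hypothesis, $C^2 \geq 0$, and since $C$ is an effective curve and $H$ is ample, $C \cdot H > 0$. Hence $C \in \mathcal{C}(Y)$, and self-duality yields $D \cdot C \geq 0$. Since this holds for every irreducible curve $C$, the divisor $D$ is nef.

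I do not expect any significant obstacle here: both containments are standard consequences of Hodge index, and the hypothesis that $Y$ contains no negative curve is used exactly once, to place each irreducible curve $C$ inside $\mathcal{C}(Y)$ so that self-duality can be invoked.
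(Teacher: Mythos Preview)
Your proof is correct and follows essentially the same approach as the paper: both arguments rest on the self-duality of $\mathcal{C}(Y)$ (which the paper states just before the lemma, and which you prove explicitly via Hodge index and Cauchy--Schwarz) together with the observation that every irreducible curve lies in $\mathcal{C}(Y)$ once negative curves are excluded. The paper packages the conclusion slightly differently—citing \cite[Example~1.4.33]{Lazarsfeld1} for the self-duality of the nef cone and then using that two self-dual cones, one contained in the other, must coincide—whereas you verify nefness directly; but the substance is the same.
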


\begin{proof}
In any case, the nef cone is contained in $\mathcal{C}(Y)$. 
If $Y$ does not contain any negative curve, then the nef cone is self-dual with respect to the intersection pairing \cite[Example 1.4.33]{Lazarsfeld1}, so it must be equal to the self-dual cone $\mathcal{C}(Y)$.
\end{proof}

\begin{definition}
We say that a smooth projective surface $Y$ is \emph{prolific} if $Y$ does not contain any negative curve and every nef divisor on $Y$ is numerically equivalent to a semiample divisor.
\end{definition}

Let $\PP^\mathbf{m}$ be a product of $n$ projective spaces, and let $Y$ be a prolific surface of Picard rank at least $n+3$. 
We will show in the next subsection that
every realizable class in $\H_4(\PP^\mathbf{m},\QQ)$ is a rational multiple of $\varphi_* [Y]$ for some $\varphi: Y \rightarrow \PP^\mathbf{m}$.

\begin{theorem}\label{prop:exists surface}
For any positive integer $k$, there is a prolific surface $Y$ with Picard rank $k$.
\end{theorem}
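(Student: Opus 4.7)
The strategy is to realize $Y$ as a complex projective K3 surface whose N\'eron--Severi lattice is chosen to contain no classes of square $-2$. The absence of $(-2)$-classes automatically forbids negative curves on $Y$, since by adjunction every reduced irreducible curve of negative self-intersection on a K3 is a smooth rational curve of self-intersection $-2$. The remaining condition, that every nef divisor is numerically equivalent to a semiample one, will then follow from the well-developed structure theory of linear systems on K3 surfaces due to Saint-Donat and Pjateckii-Shapiro--Shafarevich.

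Concretely, I would first exhibit an even lattice $L$ of signature $(1, k-1)$ containing no vector of square $-2$; a simple choice is
\[
L = \langle 4 \rangle \oplus \langle -4 \rangle^{\oplus (k-1)},
\]
in which every square is divisible by $4$. For $k \leq 20$, Nikulin's primitive embedding theorem realizes $L$ as a primitive sublattice of the K3 lattice $\Lambda_{K3} = U^{\oplus 3} \oplus E_8(-1)^{\oplus 2}$, and the surjectivity of the period map for marked complex K3 surfaces \cite[Chapter 7]{Huybrechts}, applied to a very general period in the hyperplane determined by $L$, produces a K3 surface $Y$ with $\NS(Y) \cong L$. The absence of negative curves is then immediate from the lattice choice; by Lemma~\ref{lem:C=Nef=NE} the nef cone of $Y$ coincides with $\mathcal{C}(Y)$. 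For semiampleness I would treat a nef divisor $D$ in two cases. If $D^2 > 0$, then $D$ is nef and big on a K3 surface with trivial canonical class, and the Kawamata--Shokurov basepoint-free theorem supplies semiampleness of some positive multiple. If $D^2 = 0$ and $D \ne 0$, then Riemann--Roch combined with nefness forces $D$ to be effective, and the theorem of Pjateckii-Shapiro--Shafarevich identifies any primitive isotropic effective class on a K3 without $(-2)$-curves with the fiber class of an elliptic fibration $Y \to \PP^1$; thus $D$ is numerically a positive rational multiple of a semiample class.

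The main obstacle is accommodating arbitrary $k$, since the Picard rank of a complex K3 surface is bounded by $20$. For $k > 20$ I would instead seek prolific surfaces inside higher-dimensional CM abelian varieties or hyperk\"ahler manifolds of non-K3 deformation type, where the N\'eron--Severi rank is not constrained a priori; the technical difficulty is simultaneously suppressing $(-2)$-curves on the resulting surface and preserving semiampleness of nef classes outside the K3 setting, where the Pjateckii-Shapiro--Shafarevich dichotomy is not available verbatim. For the applications elsewhere in the paper to $(\PP^1)^n$ with $n \le 11$ only the range $k \le 14$ is needed, so the K3 construction above already suffices for the stated realizability results.
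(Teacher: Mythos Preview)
Your K3 argument is essentially correct for small Picard ranks---and indeed the paper uses precisely this idea in Section~\ref{subsection:integral} for Theorem~\ref{thm:realization over Z}---but there is a genuine gap for the theorem as stated: complex K3 surfaces have Picard rank at most $20$, and you give no construction beyond that. Your claim that only $k \le 14$ is needed downstream is incorrect: Theorem~\ref{thm:nondegenerateSteenrod} treats $\PP^{\mathbf{m}}$ with an arbitrary number $n$ of factors, and its proof in Section~\ref{subsection:rational} invokes a prolific surface of Picard rank $n+3$, so the full range of $k$ is genuinely used. (A smaller technical point: your lattice $\langle 4\rangle \oplus \langle -4\rangle^{\oplus(k-1)}$ has discriminant group with $k$ generators, so Nikulin's criterion only directly yields a primitive embedding into the K3 lattice for $k \le 11$; this is exactly the bound in \cite[Corollary~2.9]{Morrison}, which the paper itself cites. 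Reparable with a better choice of $L$, but it needs attention.)

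The paper's construction is entirely different and works uniformly in $k$: take $Y = E \times C$ with $E$ an elliptic curve satisfying $\mathrm{End}(E)\simeq\ZZ$ and $C$ a very general complete intersection curve in $E^n$, where $k = n+2$. The translation action of $E$ on the first factor rules out rigid, hence negative, curves. The Picard rank is computed via an explicit basis of $\NS(E^{n+1})_{\QQ}$ together with Severi's theorem that $\ker(\mathrm{Jac}(C)\to E^n)$ is simple for very general $C$. For semiampleness of a nef class with square zero, rather than invoking basepoint-free theorems, the paper writes down an explicit group homomorphism $\Psi:E^{n+1}\to E$ whose fiber, restricted to $Y$, is numerically proportional to the given class. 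This approach is more computational than yours but has no ceiling on $k$.
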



It is enough to consider the case $k \ge 3$. We write $k=n+2$ for a positive integer $n$.
For the remainder of this subsection, 
 fix an elliptic curve $(E,e)$ with $\textrm{End}(E) \simeq \ZZ$ and an integer $d \ge 3$. 
Let $H_0,H_1,\ldots,H_n$ be the inverse images of $e$ under the coordinate projections $E^{n+1} \to E$. 
The $0$-th coordinate will play a distinguished role, and we will occasionally emphasize this by writing $E^{n+1}=E \times E^n$.
We set
\[
Y \coloneq \Bigg(\text{the intersection of $(n-1)$ general members in the linear system of $\sum_{i=1}^n dH_i$}\Bigg).
\]
The surface $Y$ is of the form $E \times C$, where $C$ is a complete intersection curve in $E^n$. By Bertini's theorem, $C$ is smooth and irreducible. We show that $Y$ has the required properties.

\begin{lemma}
The surface $Y$ does not contain any negative curve.
\end{lemma}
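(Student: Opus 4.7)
The plan is to deduce the statement directly from adjunction on $Y = E \times C$ combined with Riemann--Hurwitz for the projection $\pi_C \colon Y \to C$. The essential structural input is that $K_E = 0$, which forces the canonical class of the product to be pulled back from $C$: namely $K_Y = (2g_C - 2)\,F$, where $F$ denotes the class of a fiber of $\pi_C$.

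Let $D \subset Y$ be an irreducible reduced curve, and write $\pi_E, \pi_C$ for the two projections. I would first dispose of the easy case: if $D$ is contained in a fiber of either $\pi_E$ or $\pi_C$, then by irreducibility $D$ must equal that fiber, and $D^2 = 0$. Otherwise both projections restrict to surjective morphisms on $D$, so in particular $\pi_C|_D$ has some positive degree $a$, giving $D \cdot F = a \ge 1$. In this situation adjunction yields
\[
D^2 \;=\; 2 p_a(D) - 2 - (2g_C - 2)\,a.
\]
I would then pass to the normalization $\nu \colon \tilde D \to D$ and observe that $\pi_C \circ \nu \colon \tilde D \to C$ is a degree-$a$ morphism. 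Riemann--Hurwitz gives $2 g(\tilde D) - 2 \ge a(2g_C - 2)$, which combined with the standard inequality $p_a(D) \ge g(\tilde D)$ forces $p_a(D) \ge 1 + a(g_C - 1)$. Substituting back into the adjunction expression yields $D^2 \ge 0$, so $D$ is not a negative curve.

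I do not anticipate any serious obstacle here: the whole argument becomes essentially automatic once one notices that $K_Y$ is pulled back entirely from $C$. The only minor points to verify are that generality of the defining sections of $C$ ensures $Y$ is smooth so that adjunction applies (which follows from Bertini inside $E^{n+1}$), and that the edge case $n = 1$, where $C = E$ has genus one and the adjunction formula reduces to $D^2 = 2 p_a(D) - 2$, still works because any non-fiber curve $D$ surjects onto the positive-genus curve $C$ and hence has $p_a(D) \ge 1$.
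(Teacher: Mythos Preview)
Your proof is correct but takes a genuinely different route from the paper. The paper argues in three sentences via the group action: $E$ acts on $Y = E \times C$ by translation on the first factor, so any curve not invariant under this action moves in a positive-dimensional algebraic family (its $E$-orbit), and the only $E$-invariant irreducible curves are the fibers $E \times \{c\}$, which themselves move over $C$. Since a negative curve must be rigid, there are none.

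Your approach instead exploits the same genus-one hypothesis numerically rather than through the group law: $K_E = 0$ forces $K_Y$ to be pulled back from $C$, after which adjunction and Riemann--Hurwitz combine to give $D^2 \ge 0$ directly. This is a perfectly valid alternative; it is longer but uses only standard curve-theoretic tools and makes the nonnegativity of $D^2$ explicit rather than deducing it from non-rigidity. The paper's argument is slicker and more conceptual, while yours would transfer verbatim to any situation where one factor has trivial canonical class even without an available group action.
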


\begin{proof}
Since the elliptic curve $E$ acts on $Y$ by addition on the first factor, the only possible rigid curves are fibers of the projection $Y \to C$. However, any such curve moves in an algebraic family over $C$, so $Y$ does not contain any rigid curve.
\end{proof}

\begin{lemma}\label{lem:ellipticproduct}
For distinct indices $i$ and $j$, consider the divisor
\[
\Delta_{ij}\coloneq \left\{(x_0,x_1,\ldots,x_n) \in E^{n+1} \;\middle\vert\; x_i=x_j \right\}.
\]
Then  $[H_i]$ for all $i$ and $[\Delta_{ij}]$ for all distinct $i$ and $j$ form a basis of $\textrm{NS}(E^{n+1})_\QQ$.
\end{lemma}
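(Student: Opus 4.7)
The plan is to show linear independence of the $(n+1)+\binom{n+1}{2}=\binom{n+2}{2}$ proposed classes and then match this count against the rank of $\textrm{NS}(E^{n+1})_\QQ$, which equals $\binom{n+2}{2}$. The rank computation uses the standard Néron--Severi description for abelian varieties: since $\textrm{End}(E)\simeq\ZZ$, we have $\textrm{End}(E^{n+1})\simeq M_{n+1}(\ZZ)$, and under the product principal polarization (induced by $E\simeq E^\vee$ via $[e]$) the Rosati involution is matrix transposition; hence $\textrm{NS}(E^{n+1})_\QQ$ is identified with $\textrm{Sym}_{n+1}(\QQ)$, of dimension $\binom{n+2}{2}$.

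For linear independence, suppose $\sum_i a_i[H_i]+\sum_{i<j}b_{ij}[\Delta_{ij}]=0$. For each pair $k<l$, consider the group homomorphism $\iota_{kl}\colon E\times E\hookrightarrow E^{n+1}$ sending $(x,y)$ to the tuple with $x$ in position $k$, $y$ in position $l$, and $e$ in every other position; pullback along $\iota_{kl}$ is a well-defined homomorphism on Picard (and hence Néron--Severi) groups. Direct computation of the compositions $p_i\circ\iota_{kl}$ and $p_{ij}\circ\iota_{kl}$ shows: $\iota_{kl}^*[H_k]=[\{e\}\times E]$ and $\iota_{kl}^*[H_l]=[E\times\{e\}]$, while $\iota_{kl}^*[H_i]=0$ for $i\notin\{k,l\}$ because $p_i\circ\iota_{kl}$ is constant; $\iota_{kl}^*[\Delta_{kl}]=[\Delta_E]$ and $\iota_{kl}^*[\Delta_{ij}]=0$ whenever $\{i,j\}\cap\{k,l\}=\emptyset$; and $\iota_{kl}^*[\Delta_{km}]=[\{e\}\times E]$ and $\iota_{kl}^*[\Delta_{lm}]=[E\times\{e\}]$ for $m\notin\{k,l\}$.

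The classes $[\{e\}\times E]$, $[E\times\{e\}]$, and $[\Delta_E]$ are linearly independent in $\textrm{NS}(E\times E)_\QQ$: their intersection pairing gives the Gram matrix
\[
\begin{pmatrix}
0 & 1 & 1\\
1 & 0 & 1\\
1 & 1 & 0
\end{pmatrix},
\]
whose determinant is $2\neq 0$. Applying $\iota_{kl}^*$ to the relation, the coefficient of $[\Delta_E]$ yields $b_{kl}=0$ for every $k<l$; once all $b_{ij}$ vanish, the coefficients of $[\{e\}\times E]$ and $[E\times\{e\}]$ force each $a_k=0$.

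The main obstacle is justifying the rank computation $\dim\textrm{NS}(E^{n+1})_\QQ=\binom{n+2}{2}$, which I would settle by invoking the standard correspondence between Néron--Severi classes on an abelian variety and Rosati-symmetric endomorphisms. If one wishes to avoid this, spanning can be established directly using the Künneth decomposition of $H^2(E^{n+1},\QQ)$ together with $\textrm{NS}(E)_\QQ=\QQ[e]$ (which again uses $\textrm{End}(E)\simeq\ZZ$) to exhibit a basis of the $(1,1)$-part built from pullbacks along the projections $p_i$ and $p_{ij}$.
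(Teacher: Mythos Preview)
Your proof is correct. Both halves---the dimension count and linear independence---are handled cleanly, and the pullback computations are accurate (in particular, you correctly interpret pullback at the level of line bundles, so that classes such as $\iota_{kl}^*[H_i]$ for $i\notin\{k,l\}$ vanish because a line bundle pulled back along a constant map is trivial).

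Your route differs from the paper's in both steps. For the dimension, the paper argues inductively via the identification $\textrm{NS}(E\times E^n)_\QQ/\big(p_1^*\textrm{NS}(E)_\QQ+p_2^*\textrm{NS}(E^n)_\QQ\big)\simeq\textrm{Hom}(E,E^n)\otimes\QQ$, whereas you invoke the correspondence between $\textrm{NS}$ and Rosati-symmetric endomorphisms in one stroke; your argument is more direct, while the paper's has the advantage of citing a concrete reference and avoiding the polarization machinery. For linear independence, the paper intersects the proposed classes against the coordinate and diagonal elliptic curves $E_k$ and $E_{kl}$ and solves the resulting linear system explicitly, while you restrict to copies of $E\times E$ and read off $b_{kl}=0$ from the coefficient of $[\Delta_E]$. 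The two arguments are close in spirit---both probe the relation against two-parameter subfamilies---but yours isolates each $b_{kl}$ immediately, whereas the paper's requires combining three intersection numbers to extract each $b_{kl}$.
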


\begin{proof}
Let $p_1$ and $p_2$ be the projections from $E \times E^n$ to $E$ and $E^n$, and let $q_1$ and $q_2$ be the inclusions from $E \simeq E \times e^n$ and $E^n\simeq e \times E^n$ to $E \times E^n$.
By \cite[Theorem 3.9]{Scholl}, we have
\[
\textrm{NS}(E \times E^{n})_\QQ / ( p_1^* \textrm{NS}(E)_\QQ + p_2^* \textrm{NS}(E^n)_\QQ ) \simeq \text{Hom}(E,E^n)\otimes \QQ.
\]
The pullbacks $p_1^*,p_2^*,q_1^*,q_2^*$ define and split the short exact sequence
\[
0 \longrightarrow \textrm{NS}(E)_\QQ \oplus  \textrm{NS}(E^n)_\QQ \longrightarrow \textrm{NS}(E \times E^{n})_\QQ \longrightarrow \textrm{Hom}(E,E^n)\otimes \QQ \longrightarrow 0.
\]
Since $\dim \textrm{Hom}(E,E^n)\otimes \QQ=n$ by the assumption on $E$, 
 by induction, we see that 
\[
\dim \textrm{NS}(E \times E^{n})_\QQ
=1+n+\frac{1}{2}n(n+1)=\frac{1}{2}(n+1)(n+2).
\]
To see the linear independence of $[H_i]$ and $[\Delta_{ij}]$, we intersect the divisor classes against the coordinate elliptic curves $E_k \simeq E$ and the diagonal elliptic curves $E_{kl} \simeq E$ in $E^{n+1}$ for all $k \neq l$: 
\begin{align*}
\int_{E_k} H_i&=\begin{cases}
    1&\text{if $i=k$},\\
    0&\text{otherwise},
\end{cases}\\
\int_{E_{kl}} H_i&=\begin{cases}
    1&\text{if there are exactly two distinct elements in $i,k,l$},\\
    0&\text{otherwise},
\end{cases}\\
\int_{E_k} \Delta_{ij}&=\begin{cases}
    1&\text{if there are exactly two distinct elements in $i,j,k$},\\
    0&\text{otherwise},
\end{cases}\\
\int_{E_{kl}} \Delta_{ij}&=\begin{cases}
    1&\text{if there are exactly three distinct elements in $i,j,k,l$}\\
    0&\text{otherwise}.
\end{cases}
\end{align*}
For any rational numbers $a_i$ and $b_{ij}$,
we see that the conditions 
\begin{align*}
\int_{E_k} \Bigg( \sum_i a_i H_i+\sum_{i<j} b_{ij} \Delta_{ij} \Bigg)&=a_k +\sum_{j\neq k} b_{jk}=0,\\
\int_{E_l} \Bigg( \sum_i a_i H_i+\sum_{i<j} b_{ij} \Delta_{ij} \Bigg)&=a_l +\sum_{j\neq l} b_{jl}=0,\\
\int_{E_{kl}} \Bigg( \sum_i a_i H_i+ \sum_{i<j} b_{ij} \Delta_{ij} \Bigg)&=a_k +a_l +\Bigg(-b_{kl}+\sum_{j \neq k} b_{jk}\Bigg) +\Bigg(-b_{kl}+\sum_{j \neq l} b_{jl}\Bigg)=0
\end{align*}
together imply $b_{kl}=0$ for all distinct $k$ and $l$, which in turn implies $a_k=0$ for all $k$.
\end{proof}

For the remainder of this subsection, we set $\gamma \coloneq d^{n-1}(n-1)!$ and 
$\Delta_i \coloneq \Delta_{0i}$ for $i \neq 0$.

\begin{lemma}\label{lem:intersection}
For distinct indices $i\neq 0$ and $j\neq 0$, we have
\[
\int_Y \Delta_{i} \Delta_{j}=2\gamma, \qquad \int_Y  \Delta_{i} H_0 = \int_Y \Delta_{i} H_1 =\gamma, 
\qquad \int_Y  H_0 H_1 = \gamma.
\]
\end{lemma}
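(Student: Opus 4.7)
The plan is to reduce all four intersection numbers on $Y$ to top intersection numbers on the ambient $E^{n+1}$ via the projection formula. Since $Y$ is cut out transversally by $n-1$ divisors in the linear system $|\sum_{k=1}^n dH_k|$, for any two classes $D_1, D_2 \in \NS(E^{n+1})_\QQ$ we have
\[
\int_Y D_1 D_2 = \int_{E^{n+1}} D_1 D_2 \left(\sum_{k=1}^n dH_k\right)^{n-1}.
\]
Each $H_k$ is the pullback of a point class along the $k$-th projection $E^{n+1} \to E$, so $H_k^2 = 0$. Expanding the right-hand side by the multinomial theorem and keeping only squarefree monomials yields
\[
\left(\sum_{k=1}^n dH_k\right)^{n-1} = \gamma \sum_{l=1}^n M_l, \qquad M_l \coloneq \prod_{\substack{1 \le k \le n \\ k \neq l}} H_k,
\]
which explains the factor $\gamma = (n-1)! d^{n-1}$.

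After this reduction, each of the four intersection numbers is a sum of at most $n$ terms of the form $\int_{E^{n+1}} D_1 D_2 M_l$. I would evaluate each such term using two tools: the basic relation $H_0 H_1 \cdots H_n = 1$ in the top cohomology of $E^{n+1}$, and the Künneth formula for the diagonal, $[\Delta_{0i}] = H_0 + H_i - \Phi_{0i}$, where $\Phi_{0i} \in H^1(E_0) \otimes H^1(E_i)$ satisfies $H_0 \Phi_{0i} = H_i \Phi_{0i} = 0$ (because $H^3(E) = 0$). Equivalently, for the $\Delta_i \Delta_j$ computation one can use transversality directly: $\Delta_{0i} \cdot \Delta_{0j} = [\Delta_{0ij}]$ is the triple diagonal, which is isomorphic to $E^{n-1}$, and then integrate the restriction of $M_l$ to $\Delta_{0ij}$ using that two factors pulled back from the same $E$-coordinate give zero.

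For $\int_Y \Delta_i \Delta_j$, only the terms $l = i$ and $l = j$ survive: when $l \notin \{i, j\}$, both $H_i|_{\Delta_{0ij}}$ and $H_j|_{\Delta_{0ij}}$ equal the point class on the $0$-th factor, and their product vanishes. Each of the surviving terms contributes $1$, yielding $2\gamma$. For $\int_Y \Delta_i H_0$, $\int_Y \Delta_i H_1$, and $\int_Y H_0 H_1$, the analogous count shows that exactly one $M_l$ survives, contributing $1$, so the answer is $\gamma$ in each case. The main obstacle is purely bookkeeping: one must carefully distinguish the case $i = 1$ from $i \neq 1$ in the computation of $\int_Y \Delta_i H_1$ (the surviving index $l$ differs), and one must be careful that the $\Phi_{0i}$-contribution from $[\Delta_{0i}]$ is killed by any adjacent $H_0$ or $H_i$ factor. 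In every case the result is independent of such choices, matching the symmetric values stated in the lemma.
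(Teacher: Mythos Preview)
Your proposal is correct and follows essentially the same route as the paper: reduce to $E^{n+1}$ via $[Y]=(\sum_k dH_k)^{n-1}$, expand using $H_k^2=0$ to get $\gamma\sum_l M_l$, and count surviving terms. The only cosmetic difference is that the paper encodes the diagonal relations as the transversality identities $[\Delta_i][H_i]=[\Delta_i][H_0]=[H_0][H_i]$ rather than via the K\"unneth decomposition or the triple diagonal, but these are equivalent bookkeeping devices (and incidentally, for $\int_Y \Delta_i H_1$ the surviving index is $l=1$ in both cases $i=1$ and $i\neq 1$; only the substitution step differs).
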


\begin{proof}
Since $H_0$, $H_i$, and $\Delta_i$ intersect transversely, we have
\[
[H_i] [\Delta_{i}]=[H_0] [\Delta_{i}]=[H_i][H_0] \ \ \text{and} \ \ [\Delta_i][\Delta_i]=[H_i][H_i]=[H_0][H_0]=0.
\]
Thus, for example, we have
\begin{align*}
\int_Y \Delta_{1} \Delta_{2} &=d^{n-1} \int_{E^{n+1}} \Delta_1 \Delta_2 (H_1+\cdots+H_n)^{n-1}\\
&= \gamma \left[ \int_{E^{n+1}} \Delta_{1}\Delta_{2} H_1 (H_3 \cdots H_n)+\int_{E^{n+1}} \Delta_{1} \Delta_{2} H_2 (H_3 \cdots H_n)\right] \\
&= \gamma \left[ \int_{E^{n+1}} \Delta_{2} H_0 H_1 (H_3 \cdots H_n) +\int_{E^{n+1}} \Delta_{1} H_0 H_2(H_3 \cdots H_n)\right]\\
&=\gamma \left[ \int_{E^{n+1}} H_0H_1H_2(H_3 \cdots H_n)+\int_{E^{n+1}} H_0H_1H_2(H_3 \cdots H_n) \right]=2\gamma.
\end{align*}
The other computations are similar and simpler. For example, we have
\begin{align*}
\int_Y  \Delta_{i} H_0 &=d^{n-1}\int_{E^{n+1}} H_0H_i (H_1+ \cdots+ H_n)^{n-1}=\gamma \int_{E^{n+1}} H_0H_1H_2 \cdots H_n
=\gamma, \\
\int_Y  \Delta_{i} H_1 &=d^{n-1}\int_{E^{n+1}} \Delta_i H_1 (H_1+ \cdots+ H_n)^{n-1}=\gamma \int_{E^{n+1}} \Delta_i H_1H_2 \cdots H_n
=\gamma. \qedhere
\end{align*}
\end{proof}

We use the computation in Lemma~\ref{lem:intersection} to obtain an explicit basis of $\textrm{NS}(Y)_\QQ$. 
We write $\iota^*$ for the pullback map $\textrm{NS}(E \times E^n)_\QQ \to \textrm{NS}(Y)_\QQ$.

\begin{lemma}\label{lem:basis}
The N\'eron--Severi space $\NS(Y)_\QQ$ has basis
\[
[D_1]\coloneq \iota^*[\Delta_{1}], \ \ \ldots, \ \ [D_n]\coloneq \iota^*[\Delta_{n}], \quad [D_{n+1}]\coloneq  \iota^*[H_0], \quad [D_{n+2}]\coloneq \iota^*[H_1].
\]
\end{lemma}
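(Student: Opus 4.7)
The plan is to combine a direct Gram-matrix computation with an upper bound on the Picard rank of $Y$ coming from the product structure $Y = E \times C$. The intersection numbers of the proposed basis elements are almost all supplied by Lemma~\ref{lem:intersection}; the only entries not computed there are the self-intersections, and these all vanish because each $H_i$ is the pullback of a point under the $i$th projection $E^{n+1} \to E$, and each $\Delta_j$ is the pullback of a point under the difference map $(x_0,\ldots,x_n) \mapsto x_0 - x_j$, so $[H_i]^2 = [\Delta_j]^2 = 0$ already in $H^\bullet(E^{n+1})$. Assembling this, the Gram matrix of $[D_1], \ldots, [D_{n+2}]$ equals $\gamma$ times
\[
M = \begin{pmatrix} 2(J_n - I_n) & \mathbf{1}_n & \mathbf{1}_n \\ \mathbf{1}_n^T & 0 & 1 \\ \mathbf{1}_n^T & 1 & 0 \end{pmatrix},
\]
where $J_n$, $I_n$, $\mathbf{1}_n$ are the all-ones matrix, identity matrix, and all-ones column vector of size $n$.

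To see that $M$ is nonsingular, I perform an orthogonal change of basis on the first $n$ coordinates replacing them by $\mathbf{1}_n/\sqrt{n}$ together with an orthonormal basis of its orthogonal complement, leaving $D_{n+1}, D_{n+2}$ fixed. This block-diagonalizes $M$ into a $-2 I_{n-1}$ block on the complement and a $3\times 3$ block on the span of $\mathbf{1}_n/\sqrt{n}, D_{n+1}, D_{n+2}$ whose determinant works out to $2$. Hence $\det M = \pm 2^n \neq 0$, so $[D_1], \ldots, [D_{n+2}]$ are linearly independent in $\NS(Y)_\QQ$ and $\rho(Y) \geq n+2$.

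The remaining task is the matching upper bound $\rho(Y) \leq n+2$. Because $Y = E \times C$ with $C$ a smooth curve in $E^n$, the standard N\'eron--Severi decomposition for such a product,
\[
\NS(E \times C)_\QQ \simeq \NS(E)_\QQ \oplus \NS(C)_\QQ \oplus \mathrm{Hom}(E, \mathrm{Jac}(C))_\QQ,
\]
reduces the question to showing $\dim_\QQ \mathrm{Hom}(E, \mathrm{Jac}(C))_\QQ = n$. The Albanese map induced by $C \hookrightarrow E^n$ gives a surjection $\mathrm{Jac}(C) \twoheadrightarrow E^n$, hence an isogeny decomposition $\mathrm{Jac}(C) \sim E^n \times A$ with $A$ the kernel; combined with $\mathrm{End}(E) = \ZZ$ this yields $\mathrm{Hom}(E, \mathrm{Jac}(C))_\QQ \simeq \QQ^n \oplus \mathrm{Hom}(E, A)_\QQ$. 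The main obstacle is the last vanishing $\mathrm{Hom}(E, A)_\QQ = 0$ for a very general $C$: this is a Noether--Lefschetz type statement, which I would obtain either from a monodromy computation for the family of complete intersections in $E^n$, or by observing that the locus where $E$ is an isogeny factor of $A$ is a countable union of proper subvarieties of the parameter space, hence is avoided by a very general $C$ over an uncountable field or in characteristic zero---consistent with the running hypothesis on the ground field.
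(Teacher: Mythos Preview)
Your approach is essentially the same as the paper's: both establish linear independence via the Gram matrix and bound $\rho(Y)$ from above through the decomposition $\NS(E\times C)_\QQ \simeq \NS(E)_\QQ \oplus \NS(C)_\QQ \oplus \mathrm{Hom}(E,\mathrm{Jac}(C))_\QQ$, reducing to $\mathrm{Hom}(E,A)_\QQ=0$ for the complementary factor $A$ of $E^n$ in $\mathrm{Jac}(C)$.

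The one point where your sketch is incomplete is exactly the step you flag. Your countable-union argument is circular as written: knowing that the locus where $E$ is an isogeny factor of $A$ is a countable union of closed subvarieties does not by itself show these subvarieties are \emph{proper}; for that you would already need one $C$ with $\mathrm{Hom}(E,A)_\QQ=0$. The monodromy route does work, but it is a substantive theorem, not a remark. The paper closes this gap by invoking Severi's theorem (as made precise by Ciliberto--van der Geer, and extended to positive characteristic by Banerjee): for a very general curve $C$ in a very ample linear system on a smooth surface $S$, the kernel of $\mathrm{Jac}(C)\to\mathrm{Alb}(S)$ is a \emph{simple} abelian variety. Applying this with $S$ the intersection of $(n-2)$ general members of $|\sum dH_i|$ in $E^n$ (so $\mathrm{Alb}(S)\simeq E^n$ by Lefschetz) gives that $A$ is simple of dimension $>1$, hence $\mathrm{Hom}(E,A)_\QQ=0$. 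This is the missing citation you need.

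A minor stylistic difference: the paper also verifies that $[D_1],\ldots,[D_{n+2}]$ span the image of $\iota^*$ by first checking $\iota^*[\Delta_{ij}]=2[D_{n+2}]$ for $i,j\neq 0$ and then invoking the basis of $\NS(E^{n+1})_\QQ$ from the preceding lemma. Your route bypasses this by going straight to the dimension count, which is slightly cleaner.
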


\begin{proof}
Recall that $Y =E \times C$ for $C \subseteq E^n$.
Arguing as before for distinct $i \neq 0$ and $j \neq 0$, 
\[
\int_C \Delta_{ij}=\int_C 2H_i=2\gamma,  \ \ \text{and hence} \ \ 
\iota^* [\Delta_{ij}] =2\iota^* [H_i]= 2 [D_{n+2}].
\]
Lemma~\ref{lem:ellipticproduct} thus implies that
 $[D_1],\ldots,[D_{n+2}]$ span $\textrm{im}(\iota^*)$. 

Each $[D_i]$ is either a pullback of the diagonal in $E \times E$ or a pullback of a divisor on $E$, so its self-intersection is zero. Therefore, by Lemma~\ref{lem:intersection}, the intersections between
\[
 [D_i]-[D_{n+1}]-[D_{n+2}]  \quad [D_{n+1}]-[D_{n+2}], \quad [D_{n+1}]+[D_{n+2}], \quad i=1,\ldots,n,
\]
are given by the diagonal matrix with diagonal entries
$-2\gamma,\ldots,-2\gamma,2\gamma$. For example,
\begin{align*}
& \Big( [D_1]-[D_{n+1}]-[D_{n+2}] \Big) \Big(  [D_2]-[D_{n+1}]-[D_{n+2}] \Big)= 0, \\
&  \Big( [D_1]-[D_{n+1}]-[D_{n+2}] \Big) \Big(  [D_{n+1}]-[D_{n+2}] \Big)= 0, \\
&   \Big( [D_1]-[D_{n+1}]-[D_{n+2}] \Big) \Big(  [D_{n+1}]+[D_{n+2}] \Big)= 0.
\end{align*}
Since this matrix is nondegenerate,  $[D_1],\ldots,[D_{n+2}]$ form a basis of $\textrm{im}(\iota^*)$.

It remains to show that $\textrm{im}(\iota^*)=\NS(Y)_\QQ$.
The key step in the proof is the following classical result, going back to Severi\footnote{Severi's original arguments in \cite{Severi1} and \cite{Severi2} were incomplete. A complete proof was later provided by Ciliberto and van der Geer in \cite{Ciliberto-Geer}. The same statement was generalized to arbitrary uncountable algebraically closed ground fields in \cite{Banerjee}. When the ground field is $\overline{\QQ}$, the existence of the curve $C$ was established in \cite{Koch}.}:
\begin{quote}
\emph{Let $S$ be a smooth projective surface. If $C$ is a very general curve in the linear system of a very ample divisor, then the kernel of the natural map $\textrm{Jac}(C) \to \textrm{Alb}(S)$ is a simple abelian variety.}
\end{quote}
Let $S$ be the intersection of $(n-2)$ general members in the linear system of $\sum_{i=1}^n dH_i$ in $E^n$.
By the Lefschetz hyperplane theorem, 
 the Albanese variety of $S$ is isomorphic to $E^n$,\footnote{For the case over an arbitrary algebraically closed ground field, see \cite[Corollaire XII.3.6]{SGA2}.}
 and hence the Jacobian of $C$ is isogenous to $A \times E^n$ for some simple abelian variety $A$ of large dimension.
 Therefore, by \cite[Theorem 3.9]{Scholl}, the surface $Y=E\times C$ satisfies
    \[
\textrm{NS}(Y)_\QQ /(\textrm{NS}(E)_\QQ + \textrm{NS}(C)_\QQ) \simeq  \textrm{Hom}(E,\textrm{Jac}(C)) \otimes \QQ \simeq   \textrm{Hom}(E,E^n) \otimes \QQ.
    \]
It follows that the dimension of $\NS(Y)_\QQ$ is equal to that of $\textrm{im}(\iota^*)$.
\end{proof}

\begin{proof}[Proof of Theorem~\ref{prop:exists surface}]

It remains to show that any nef divisor $D$ is numerically equivalent to a semiample divisor.
By replacing $D$ with a positive multiple if necessary, we may write
\[
[D]=d_1[D_1]+\cdots+d_{n+1}[D_{n+1}]+d_{n+2}[D_{n+2}] \ \ \text{for} \ \  d_i \in \ZZ.
\]
By Lemma~\ref{lem:intersection}, 
 the integers $d_i$ are determined by the intersection numbers
\[
\frac{1}{\gamma}\int_Y D  D_i=\begin{cases} \hfill 2s_0-2d_i+d_{n+1}+d_{n+2} \hfill& \text{when $1 \le i \le n,$}\\
\hfill s_0+d_{n+2}\hfill & \text{when $i=n+1,$}\\
 \hfill s_0+d_{n+1} \hfill& \text{when $i=n+2,$}
 \end{cases}
\]
where $s_0\coloneq d_1+\cdots+d_n$. 
If the self-intersection of $D$ is positive, then $D$ is ample by Lemma~\ref{lem:C=Nef=NE}. Thus, we may and will assume that the self-intersection of $D$ is zero:
\[
\frac{1}{2\gamma} \int_Y D^2 
 =\Big(s_0+d_{n+1}\Big)\Big(s_0+d_{n+2}\Big)-\Big(\sum_{j=1}^n d_j^2\Big)=0.
\]
Since $D$ is nef, the integer $s_0+d_{n+1}$ is nonnegative. If it is zero, then $d_1=\cdots=d_n=0$ by the displayed formula, so $D$ is numerically equivalent to a nonnegative multiple of the semiample divisor $D_{n+2}$.
Thus, without loss of generality, we may suppose that $s_0+d_{n+1}$ is positive.

We use the group structure of $E$ to define a map
\[
\Psi:E \times E^{n} \longrightarrow E, \qquad (x_0,x_1,\ldots,x_n) \longmapsto (s_0+d_{n+1}) x_0-d_1x_1+\cdots-d_nx_n.
\]
We gather some intersection-theoretic information on the fiber $F$ of $\Psi$ over $e$:
\begin{enumerate}[(1)]\itemsep 5pt
\item When $i$ is an element of $\{1, \ldots, n\}$, the intersection product of $F$ and $(H_1 \cdots H_{i-1}) \Delta_i (H_{i+1} \cdots H_n)$ counts the number of points $x_0$ in $E$ such that $(s_0+d_{n+1}-d_i) x_0$ is a given general point. Since there are $(s_0+d_{n+1}-d_i)^2$ such points, we have
\[
\int_{E^{n+1}} F (H_1 \cdots H_{i-1}) \Delta_i (H_{i+1} \cdots H_n) = (s_0+d_{n+1}-d_i)^2.
\]
\item When $j$ is an element of $\{1, \ldots, n\}$, the intersection product of $F$ and $(H_1 \cdots H_{j-1})H_0 (H_{j+1} \cdots H_n)$ counts the number of points $x_j$ in $E$ such that $-d_j x_j$ is a given general point. Since there are $d_j^2$ such points, we have
\[
\int_{E^{n+1}} F (H_1 \cdots H_{j-1}) H_0 (H_{j+1} \cdots H_n) = d_j^2.
\]
\item When $i$ and $j$ are distinct elements of $\{1, \ldots, n\}$, the intersection product of $F$ and\\
$(H_1 \cdots H_{j-1}) \Delta_i (H_{j+1} \cdots H_n)$ counts the number of points $x_j$ in $E$ such that $-d_j x_j$ is a given general point. Since there are $d_j^2$ such points, we have
\[
\int_{E^{n+1}} F (H_1 \cdots H_{j-1}) \Delta_i (H_{j+1} \cdots H_n) = d_j^2.
\]
\end{enumerate}
Thus, for $1 \le i \le n$, we have
\begin{align*}
&\ \ \, \frac{1}{\gamma}\int_Y  \iota^*(F) \,  D_{i} \ \,   = \frac{1}{(n-1)!}\int_{E^{n+1}} F \Delta_i (H_1 +\cdots+H_n)^{n-1}=  (s_0+d_{n+1}+d_i)^2- d_i^2+  \sum_{j=1}^n d_j^2, \\
&\frac{1}{\gamma}\int_Y  \iota^*(F)  D_{n+1} = \frac{1}{(n-1)!}\int_{E^{n+1}} F H_0 (H_1 +\cdots+H_n)^{n-1}= \sum_{j=1}^n d_j^2,\\
&\frac{1}{\gamma}\int_Y  \iota^*(F)  D_{n+2} = \frac{1}{(n-1)!}\int_{E^{n+1}} F H_1 (H_1 +\cdots+H_n)^{n-1}= (s_0+d_{n+1})^2.
\end{align*}
Comparing the intersection numbers for $\iota^*(F)$ with the intersection numbers for $D$, we see that
\[
\frac{1}{\gamma}\int_Y  \iota^*(F)  D_{i}=\frac{1}{\gamma}\int_Y  (s_0+d_{n+1})D D_{i} \ \ \text{for $1 \le i \le n+2$.}
\]
Since the intersection pairing is nondegenerate, 
$(s_0+d_{n+1})D$ is numerically equivalent to $\iota^*(F)$, which is semiample.
\end{proof}

\begin{remark}\label{rmk:Picard rank}
The proof of Lemma~\ref{lem:basis}, and hence the proof of Theorem~\ref{prop:exists surface}, is valid if the ground field to be uncountable or of characteristic $0$.
Is there a prolific surface $Y$ over $\overline{\mathbb{F}}_p$ of any given Picard rank? 
\end{remark}

\subsection{Classes realizable over $\QQ$}\label{subsection:rational}

We use a prolific surface $Y$ of large Picard rank to  prove Theorem~\ref{thm:nondegenerateSteenrod}.
The key remaining tool is Meyer's theorem on indefinite rational quadratic forms \cite[Chapter 6]{Cassels}:
\begin{quote}
\emph{If a homogeneous quadratic equation $Q(x_1,\ldots,x_n)=0$ with rational coefficients in five or more variables has a nonzero real solution, then it has a nonzero rational solution.}
\end{quote}
For a rational quadratic form $Q$, we write $(s_+(Q),s_-(Q),s_0(Q))$ for the signature of $Q$.

\begin{lemma}\label{lem:lattice embedding}
    Let $P$ be a rational quadratic form on $\QQ^m$,
    and let $Q$ be a rational quadratic form on $\QQ^n$ such that
    \[
s_+(P) \le s_+(Q), \ \
s_-(P) \le s_-(Q), \ \ \text{and} \ \ 
s_+(P)+s_-(P)+3 \le s_+(Q)+s_-(Q).
    \]
Then there is a $\QQ$-linear map $f:\QQ^m \to \QQ^n$ such that $P(x)=Q(f(x))$ for all $x \in \QQ^m$.
\end{lemma}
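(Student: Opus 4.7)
The plan is to produce $f$ by building an ordered orthogonal system $w_1, \ldots, w_m \in \QQ^n$ whose $Q$-values realize a diagonalization of $P$, and then defining $f$ to send a chosen basis of $\QQ^m$ to this system. The first reduction is to assume $P$ is nondegenerate: decompose $\QQ^m = W \oplus \mathrm{rad}(P)$, and observe that it suffices to build $f_W \colon W \to \QQ^n$ with $f_W^* Q = P|_W$, since extending $f_W$ by zero on $\mathrm{rad}(P)$ yields the desired map. The signature hypothesis passes verbatim to the pair $(P|_W, Q)$. After this reduction, diagonalize $P|_W$ over $\QQ$ as $\sum_{i=1}^{m} d_i z_i^2$ with every $d_i \neq 0$, where now $m = s_+(P) + s_-(P)$.

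The heart of the argument is an inductive construction of the $w_k$. At step $k$, set $V = \mathrm{span}(w_1, \ldots, w_{k-1})$, which is nondegenerate by the induction hypothesis, and pick a nondegenerate subspace $U \subseteq V^\perp$ complementary to the radical of $Q|_{V^\perp}$; its signature is $(s_+(Q) - k_+, \, s_-(Q) - k_-)$, where $k_\pm$ records how many earlier $w_i$ have $Q(w_i)$ of the corresponding sign. Consider the auxiliary rational quadratic form $R \coloneq Q|_U \perp \langle -d_k \rangle$ on $U \oplus \QQ$. The hypothesis forces $k \le m \le s_+(Q) + s_-(Q) - 3$, so $R$ has rank at least $5$; and using $k_+ \le s_+(P) - 1$ when $d_k > 0$ (symmetrically when $d_k < 0$) together with $s_\pm(P) \le s_\pm(Q)$, both signature entries of $R$ are strictly positive, so $R$ is indefinite. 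Meyer's theorem then produces a nonzero isotropic vector $(u, y) \in U \oplus \QQ$. If $y \neq 0$, the vector $w_k \coloneq u/y$ lies in $V^\perp$ and satisfies $Q(w_k) = d_k$. If $y = 0$, then $Q|_U$ is itself isotropic and nondegenerate, hence universal over $\QQ$, and we may take $w_k \in U$ to be any witness of $Q(w_k) = d_k$.

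The main obstacle is ensuring that both hypotheses of Meyer's theorem — rank at least five and indefiniteness — persist at every step of the induction, including the very last. The rank bound uses precisely the three-dimensional buffer encoded in $s_+(P) + s_-(P) + 3 \le s_+(Q) + s_-(Q)$, while the indefiniteness becomes delicate at the moment when the final positive (or final negative) vector is about to be placed, where the strict inequality $k_\pm \le s_\pm(P) - 1$ is what keeps both signature entries of $R$ strictly positive. Once these numerical verifications are in hand, the remaining parts — the reduction to a nondegenerate $P$, the diagonalization, and the case split on whether $y$ vanishes — are essentially bookkeeping.
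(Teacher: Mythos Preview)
Your proof is correct and follows essentially the same approach as the paper: an induction powered by Meyer's theorem, at each step producing a vector in the orthogonal complement of the previously chosen ones with a prescribed $Q$-value. The only cosmetic differences are that the paper reduces $Q$ (rather than $P$) to nondegenerate and, instead of your case split on $y=0$, invokes density of rational points on the nondegenerate quadric $Q(y)-P(x_0)z^2=0$ to arrange $z\neq 0$ directly.
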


\begin{proof}
By taking a complement of the kernel of $Q$ in $\QQ^n$,  we can reduce to the case when $Q$ is nondegenerate.
We prove the statement by induction on $m$ under this assumption, the case $m=0$ being trivial.

Suppose $P$ is not identically zero, since otherwise the assertion is trivial. In this case, $n \ge 4$,  and there is $x_0 \in \QQ^m$ such that $P(x_0)$ is nonzero. 
We introduce a new variable $z$, and apply Meyer's theorem to the quadratic equation 
\[
R(y,z) \coloneq Q(y)-P(x_0)z^2=0.
\]
Since $Q$ is nondegenerate, $R$ is nondegenerate, so the existence of a rational solution for $R=0$ implies the existence of a dense set of rational solutions for $R=0$. We choose a nonzero $q \in \QQ$ and $y_0 \in \QQ^n$ such that
\[
Q(y_0)=P(x_0)q^2=P(q x_0).
\]

Let $x_0^\perp$ be the orthogonal complement to $x_0$ in $\QQ^m$,
and let $y_0^\perp$ be the orthogonal complement to $y_0$ in $\QQ^n$.
Since $Q(y_0)$ and $P(x_0)$ have the same sign, the induction hypothesis applies to the restrictions of $P$ and $Q$ to these orthogonal complements. The outcome is that there is a $\QQ$-linear map $f: x_0^\perp \to y_0^\perp$ such that $P(x)=Q(f(x))$ for all $x \in x_0^\perp$. 
The unique $\QQ$-linear extension $f:\QQ^m \to \QQ^n$ satisfying $f(qx_0)=y_0$ has the desired property.
\end{proof}

\begin{proof}[Proof of Theorem~\ref{thm:nondegenerateSteenrod}]
Let $\mathbf{m}=(m_1,\ldots,m_n)$ be a vector of positive integers, and let $\eta \in \H_4(\PP^\mathbf{m},\QQ)$ be a class with $L(\eta)$ Lorentzian. 
We show that some rational multiple of $\eta$ is realizable over $\ZZ$.

By Theorem~\ref{prop:exists surface}, there is a prolific surface $Y$ of Picard rank $n+3$. 
By Lemma~\ref{lem:lattice embedding}, we can find  $[A_1],\ldots,[A_n] \in \NS(Y)_\QQ$ such that $L(\eta)_{ij}=\int_Y A_i A_j$ for all $i$ and $j$.
Choose an ample divisor $H$ on $Y$ such that $\int_Y A_i H$ is nonzero for all $i$, and set 
\[
[B_i]\coloneq \begin{cases}
    +[A_i]& \text{if $\int_YA_i H$ is positive},\\
    -[A_i]& \text{if $\int_Y A_i H$ is negative}.
\end{cases}
\]
Since $Y$ does not contain any negative curve, Lemma~\ref{lem:C=Nef=NE} determines the nef cone of $Y$. We deduce that $[B_i]$ is nef for every $i$ because $\int_Y B_i H$ and $\int_Y B_i^2$ are nonnegative. 
Since the intersection between any nef divisors is nonnegative, we have 
$L(\eta)_{ij}=\int_Y B_i B_j$ for all $i$ and $j$.

Since $Y$ is prolific, we may suppose that $B_i$ are semiample $\QQ$-divisors satisfying $L(\eta)_{ij}=\int_Y B_iB_j$ for all $i$ and $j$. 
Choose a positive integer $\ell$ such that 
each $\ell B_i$ is the pullback of a hyperplane under the map
\[
\varphi_i: Y \longrightarrow \PP \H^0(Y,\mathcal{O}_Y(\ell B_i))^\vee.
\]
The image of $\varphi_i$ is a curve if and only if $L(\eta)_{ii}=0$.

If the dimension of the target projective space is at most $m_i$, we replace $\varphi_i$ by the composition of $\varphi_i$ with a linear inclusion into $\PP^{m_i}$. If the dimension of the target projective space is larger than $m_i$, we replace $\varphi_i$ by the composition of $\varphi_i$ with a general linear projection onto $\PP^{m_i}$. 
Since the image of $\varphi_i$ is a curve whenever $m_i= 1$, such compositions do not change the class of the pullback of a hyperplane.
Combining all the maps, we get
\[
\varphi\coloneq \prod_{i=1}^n \varphi_i: Y \longrightarrow \PP^{\mathbf{m}}.
\]
By construction, $[\varphi(Y)]=\frac{\ell^2}{\lambda} \eta$, where $\lambda$ is the degree of the map $Y \to \varphi(Y)$.
\end{proof}

\begin{remark}
As noted before, Theorem~\ref{prop:exists surface} remains valid over any uncountable algebraically closed field. From this, one can deduce that Theorem~\ref{thm:nondegenerateSteenrod} remains valid over any algebraically closed field if we replace $\H_4(\PP^\mathbf{m},\QQ)$ by the Chow group $\textrm{CH}_2(\PP^\mathbf{m},\QQ)$.
To see this,
let $k_1$ be an algebraically closed field, and let $k_2$ be an uncountable algebraically closed field extension of $k_1$.
Suppose that $Z_2$ is an irreducible subvariety of $\PP^\mathbf{m}$ over $k_2$.
Fixing an ample divisor of $\PP^\mathbf{m}$, the subvariety $Z_2$ defines a $k_2$-point of the Hilbert scheme $\mathrm{Hilb}^P(\PP^\mathbf{m})$, where $P$ is the Hilbert polynomial of $Z_2$ with respect to the fixed ample divisor \cite{FGA}. 
The field extension $k_1 \subseteq k_2$ induces an injective map $\mathrm{Hilb}^P(\PP^\mathbf{m})(k_1)\to \mathrm{Hilb}^P(\PP^\mathbf{m})(k_2)$ whose image is Zariski dense. Since being geometrically integral is an open condition \cite[Th\'eor\`eme 12.2.1]{EGA4-2}, there is a $k_1$-point of $\mathrm{Hilb}^P(\PP^\mathbf{m})$ such that the corresponding $k_1$-subscheme $Z_1$ is integral and $[Z_1]=[Z_2]$. 
\end{remark}

Let $X$ be a $d$-dimensional irreducible projective variety over an algebraically closed field $k$.
For any collection of nef $\QQ$-divisors $D=(D_1,\ldots,D_n)$ on $X$,
consider the polynomial 
\[
f_H(x_1,\ldots,x_n)  = \frac{1}{d!}\int_X (x_1D_1+\cdots+x_nD_n)^{d}.
\]
A \emph{volume polynomial over $k$} is any limit of such polynomials. 
A volume polynomial over $k$ is a Lorentzian polynomial in the sense of \cite{Branden-Huh}. Theorem~\ref{thm:nondegenerateSteenrod} implies that the converse holds when $d=2$.

\begin{corollary}
For a quadratic polynomial $f$, the following conditions are equivalent:
\begin{enumerate}[(1)]\itemsep 5pt
\item $f$ is a volume polynomial over some $k$.
\item $f$ is a volume polynomial over any $k$.
\item $f$ is a Lorentzian polynomial.
\end{enumerate}
\end{corollary}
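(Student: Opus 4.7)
The plan is to deduce the corollary from Theorem~\ref{thm:nondegenerateSteenrod} together with standard closure properties of the Lorentzian cone. Since (2) $\Rightarrow$ (1) is immediate, I will establish (1) $\Rightarrow$ (3) and (3) $\Rightarrow$ (2) separately.

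For (1) $\Rightarrow$ (3), I will appeal to the general theorem of Br\"anden--Huh that volume polynomials of nef divisors are Lorentzian. The content that survives in our quadratic setting is this: for any surface $X$ (forced by $\deg f=2=\dim X$) and nef $\QQ$-divisors $D_1,\ldots,D_n$ on $X$, the coefficient matrix of $\frac{1}{2}\int_X(\sum x_i D_i)^2$ equals $(\int_X D_iD_j)$, which has nonnegative entries and, by Proposition~\ref{lem:at-most-one-positive-eigenvalue}, at most one positive eigenvalue. Since the cone of Lorentzian polynomials is closed under pointwise limits \cite{Branden-Huh}, any limit $f=\lim f_H$ of such polynomials is again Lorentzian.

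For (3) $\Rightarrow$ (2), the core task is to realize an arbitrary Lorentzian quadratic as a volume polynomial over any algebraically closed field $k$. Given a Lorentzian quadratic $f$ with Hessian matrix $L$, I will first treat the case where $L$ has rational entries, deferring the irrational case to a final density argument. Choose $\mathbf{m}=(m_1,\ldots,m_n)$ with $m_i=2$ when $L_{ii}>0$ and $m_i=1$ otherwise, and let $\eta\in\textrm{CH}_2(\PP^\mathbf{m},\QQ)$ be the unique class with $L(\eta)=L$. Applying Theorem~\ref{thm:nondegenerateSteenrod} in its Chow-group formulation over arbitrary $k$, as indicated in the remark following its proof, I obtain $\mu\in\QQ_{>0}$ and an irreducible surface $S\subseteq\PP^\mathbf{m}$ with $[S]=\mu\eta$. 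The restrictions $D_i\coloneq H_i|_S$ are then nef $\QQ$-divisors satisfying $\int_S D_iD_j=\mu L_{ij}$, so $\frac{1}{2}\int_S(\sum x_i D_i)^2=\mu f$ is a genuine volume polynomial on $S$. To pass from $\mu f$ to $f$ itself, I rescale: for any rational $r>0$ the divisors $rD_i$ remain nef $\QQ$-divisors and produce the volume polynomial $r^2\mu f$, so choosing $r_n\in\QQ_{>0}$ with $r_n^2\mu\to 1$ exhibits $f$ as a limit of volume polynomials over $k$. A density approximation of real Lorentzian matrices by rational Lorentzian matrices then handles the general case.

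The main obstacle I anticipate is reconciling the Br\"anden--Huh definition of Lorentzian polynomial, which includes M-convexity of the support in addition to the matricial Hessian condition, with the purely matricial setup of Theorem~\ref{thm:nondegenerateSteenrod}. In the quadratic case the precise interaction between these two conditions should be pinned down, and in particular I need to check that the $\mathbf{m}$ chosen above is compatible with the support constraints on $f$. A secondary subtlety is to ensure Theorem~\ref{thm:nondegenerateSteenrod} is applied in its Chow-group form, so that the conclusion holds uniformly over any algebraically closed $k$, including countable fields of positive characteristic where the existence of prolific surfaces of arbitrary Picard rank is not yet settled (Remark~\ref{rmk:Picard rank}).
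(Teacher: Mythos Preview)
Your proof is correct and matches the paper's implicit approach: the corollary is stated without proof as an immediate consequence of Theorem~\ref{thm:nondegenerateSteenrod} together with the cited fact that volume polynomials are Lorentzian. Your worry about M-convexity is unfounded---for homogeneous quadratics, the Lorentzian condition of \cite{Branden-Huh} reduces exactly to the coefficient matrix being Lorentzian in the sense of this paper, so no separate support condition needs to be checked. One minor simplification: instead of passing through $\PP^{\mathbf{m}}$ and restricting hyperplane classes to the realized surface $S$, you can take the prolific surface $Y$ and the nef $\QQ$-divisors $B_i$ constructed inside the proof of Theorem~\ref{thm:nondegenerateSteenrod}, which already satisfy $\int_Y B_iB_j=L_{ij}$ and thus exhibit $f$ itself (not a multiple) directly as a volume polynomial over $k$.
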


\begin{remark}\label{rmk:subtlety}
When $d \ge 3$, it is not known if the set of volume polynomials over $\mathbb{K}$ depends on $k$.
Not every cubic Lorentzian polynomial is a volume polynomial over $k$ for some $k$. The following example was given in \cite[Example 14]{HuhICM22}:
\[
f=14x_1^3+6x_1^2x_2+24x_1^2x_3+12x_1x_2x_3+6x_1x_3^2+3x_2x_3^2.
\]
This example highlights the subtlety involved in formulating a reasonable conjecture that characterizes, for example, the realizable classes in $\H_6((\PP^3)^3,\RR)$.
\end{remark}

\subsection{Classes realizable over $\ZZ$}\label{subsection:integral}

Let $n$ be an integer satisfying $2 \le n \le 11$. This will be the standing assumption in this subsection.
We use K3 surfaces to construct many classes in $\H_4((\PP^1)^n, \ZZ)$ that are realizable over $\ZZ$. 
An integral matrix is said to be \emph{primitive} if its entries are coprime, that is, when the greatest common divisor of the entries is $1$.

\begin{theorem}\label{thm:realization over Z}
Let $\eta$ be a class in $H_4((\PP^1)^n,\ZZ)$.  
If $L(\eta)$ is a primitive Lorentzian matrix and $p$ is a prime number, then $\eta$ is realizable over $\ZZ$ or $p\eta$ is realizable over $\ZZ$.
\end{theorem}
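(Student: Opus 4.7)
The plan is to realize $\eta$ or $p\eta$ as the class of the image of a morphism $\varphi : Y \to (\PP^1)^n$, where $Y$ is a carefully chosen complex projective K3 surface and each $\varphi_i : Y \to \PP^1$ is an elliptic fibration. The key ingredient is the Piatetski-Shapiro--Shafarevich theorem that on a K3 surface any primitive nef class $E$ with $E^2 = 0$ is the fiber class of an elliptic fibration $Y \to \PP^1$ that pulls the hyperplane class back to $E$.

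The crucial observation is the following. Consider the even lattice $M_p$ of rank $n$ with basis $E_1, \ldots, E_n$ and Gram matrix $p\,L(\eta)$. Because $L(\eta)$ has zero diagonal, $M_p$ is even; because every value of the quadratic form on $M_p$ lies in $2p\,\ZZ$, the lattice $M_p$ has no $(-2)$-classes for any prime $p$. This is precisely the feature that fails when $p = 1$: for $\eta = \eta(1,1,1,1,1,3)$, the rank-$4$ lattice with Gram matrix $L(\eta)$ contains the $(-2)$-class $E_1 - E_2$, and the existence of such classes is the obstruction underlying Proposition~\ref{cor:necconditions}.

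Next I would apply Nikulin's primitive embedding theorem. Since $n \le 11$ and the K3 lattice $\Lambda_{\mathrm{K3}}$ has rank $22$ and signature $(3,19)$, the numerical condition $\mathrm{rank}(M_p) + \ell(A_{M_p}) \le 22$ together with the signature bounds are satisfied, producing a primitive embedding $M_p \hookrightarrow \Lambda_{\mathrm{K3}}$. Surjectivity of the period map for marked complex K3 surfaces then yields, for a very general period in the corresponding period domain, a projective K3 surface $Y$ with $\Pic(Y) = M_p$; projectivity holds because $M_p$ contains a class of positive self-intersection, as $L(\eta)$ is Lorentzian and not identically zero. Since $M_p$ has no $(-2)$-classes, $Y$ contains no smooth rational curves, so its nef cone equals the closure of the positive cone in $\Pic(Y)_{\RR}$. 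Each $E_i$ is then a primitive nef class with $E_i^2 = 0$, and so defines an elliptic fibration $\varphi_i : Y \to \PP^1$.

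Finally, let $\varphi = (\varphi_1, \ldots, \varphi_n) : Y \to (\PP^1)^n$, let $V = \varphi(Y)$, and let $d = \deg(Y \to V)$. Since $\int_Y E_i E_j = p\,L(\eta)_{ij}$ is nonzero for some pair $(i,j)$, the map $\varphi$ is generically finite onto the surface $V$, and $\varphi_*[Y] = d[V]$. Pairing against $H_i H_j$ gives $p\,L(\eta)_{ij} = d \cdot L(V)_{ij}$, and since $L(\eta)$ is primitive while $L(V)$ is integral, $d$ must divide $p$, so $d \in \{1, p\}$. Either $d = 1$ and $[V] = p\eta$, or $d = p$ and $[V] = \eta$, proving the theorem. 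The main obstacle is the lattice-theoretic step ensuring $\Pic(Y) = M_p$ exactly, so that no extra $(-2)$-classes appear from a larger Picard lattice; this step depends on the global surjectivity of the period map and is the reason for the characteristic-zero hypothesis.
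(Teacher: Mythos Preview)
Your approach is essentially the same as the paper's: realize $p\,L(\eta)$ as the intersection form on $\Pic(Y)$ for a K3 surface $Y$ via Nikulin/Morrison and surjectivity of the period map, use the absence of $(-2)$-classes to rule out negative curves, obtain basepoint-free (elliptic-fibration) divisors $E_i$, and then argue that the degree of $Y\to\varphi(Y)$ divides $p$. Two points deserve more care.

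First, from ``$Y$ has no smooth rational curves, so $\Nef(Y)=\overline{\mathcal{C}(Y)}$'' you cannot immediately conclude that each $E_i$ is nef: a priori only $E_i$ \emph{or} $-E_i$ lies in $\overline{\mathcal{C}(Y)}$. The paper resolves this by a block-diagonal argument: if $E_i$ and $-E_j$ were both nef, then $E_i\cdot E_j\le 0$, forcing $L(\eta)_{ij}=0$ for all such pairs; the resulting block decomposition of a nonnegative, zero-diagonal symmetric matrix would produce at least two positive eigenvalues, contradicting Lorentzianity. You should include this step (after possibly replacing all $E_i$ by $-E_i$ simultaneously).

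Second, Nikulin's primitive embedding theorem and the statement ``$\Pic(Y)=M_p$'' require $M_p$ to be a nondegenerate lattice, but $L(\eta)$ can be primitive, Lorentzian, and degenerate (e.g.\ $\mathbf{p}=(1,1,1,1,1,4)$ in $(\PP^1)^4$). The paper handles this by passing to the induced nondegenerate form on $\ZZ^n/\ker L(\eta)$, realizing \emph{that} lattice as $\NS(Y)_\ZZ$, and taking $D_i$ to be the images of the standard basis vectors (which are then no longer required to be a basis or primitive). Your argument needs the same adjustment; note that the elliptic-fibration step survives because a nef class with square zero is basepoint-free regardless of primitivity, and a pencil in its linear system still yields a map to $\PP^1$ pulling back the hyperplane class correctly.
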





Unlike the other arguments in this paper, our proof of Theorem~\ref{thm:realization over Z} essentially relies on the fact that we are working over a field of characteristic $0$. We do not know if  Theorem~\ref{thm:realization over Z} remains valid if we work over any algebraically closed field or if we drop the assumption that $n \le 11$.

We deduce Theorem~\ref{thm:realization over Z} from the following existence result.

\begin{proposition}\label{prop:K3}
Let $L$ be a nondegenerate rank $n$ integral Lorentzian matrix with zero diagonal entries.  
If the quadratic form $\mathbf{x}L\mathbf{x}^T$ does not take the value $-2$ on $\ZZ^n$, 
then there is a complex K3 surface $Y$ and divisors $D_1,\ldots,D_n$ on $Y$ satisfying the following conditions:
\begin{enumerate}[(1)]\itemsep 5pt
\item The intersection product $\int_Y D_iD_j$ is equal to $L_{ij}$ for all $i$ and $j$.
\item The classes $[D_1],\ldots,[D_n]$ form a $\ZZ$-basis of the N\'eron--Severi group  $\textrm{NS}(Y)_\ZZ$.
\item The surface $Y$ does not contain any negative curve.
\item The divisors $D_1,\ldots,D_n$ are basepoint-free.
\end{enumerate}
%
\end{proposition}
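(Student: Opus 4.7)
The plan is to construct $Y$ as a complex K3 surface whose N\'eron--Severi lattice is isomorphic to $(\mathbb{Z}^n, L)$ as an abstract lattice, combining Nikulin's theorem on primitive lattice embeddings with the surjectivity of the period map for marked K3 surfaces \cite[Chapter 7]{Huybrechts}. Since $L$ has zero diagonal and integer off-diagonal entries, the quadratic form $\mathbf{x} L \mathbf{x}^T = 2\sum_{i<j} L_{ij} x_i x_j$ takes only even values, so $(\mathbb{Z}^n, L)$ is an even lattice of signature $(1, n-1)$. Nikulin's primitive embedding theorem produces a primitive embedding $\iota \colon (\mathbb{Z}^n, L) \hookrightarrow \Lambda_{K3}$ into the K3 lattice of signature $(3,19)$; the hypothesis $n \le 11$ is precisely what makes Nikulin's existence criterion hold, since the length of the discriminant form is at most $n$, so the condition $\text{rank}(L) + \ell(\text{disc}(L)) \le 22$ follows from $2n \le 22$. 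Write $M$ for the image of $\iota$.

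Next I would apply the surjectivity of the period map to produce a marked complex K3 surface $(Y, \varphi)$ whose period line $\mathbb{C}\omega$ lies in $M^\perp \otimes \mathbb{C}$ and is chosen \emph{generically}, so that $\omega^\perp \cap \Lambda_{K3} = M$ rather than some strict overlattice. Transporting the standard basis of $\mathbb{Z}^n$ to $\NS(Y)$ via $\varphi^{-1}\circ\iota$ yields divisor classes $D_1, \ldots, D_n$ forming a $\mathbb{Z}$-basis of $\NS(Y)$ with $\int_Y D_i D_j = L_{ij}$, giving conditions (1) and (2). For (3), adjunction on a K3 surface forces every irreducible curve of negative self-intersection to be a smooth rational $(-2)$-curve, but our hypothesis that $\mathbf{x} L \mathbf{x}^T$ never attains the value $-2$ rules out any $(-2)$-class in $\NS(Y)$. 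Lemma~\ref{lem:C=Nef=NE} then identifies the nef cone of $Y$ with the future timelike cone $\mathcal{C}(Y)$; after reorienting each $D_i$ if needed, the sum $H \coloneq D_1 + \cdots + D_n$ satisfies $H^2 > 0$ and $H \cdot D_i = \sum_j L_{ij} \ge 0$, so $H$ is ample and each $D_i$ is nef. For (4), each $D_i$ is primitive in $\NS(Y)$ (since the $D_i$ form a $\mathbb{Z}$-basis), nef, and isotropic ($D_i^2 = L_{ii} = 0$), so the Piatetski-Shapiro--Shafarevich theorem \cite[Chapter 11]{Huybrechts} realizes $|D_i|$ as a basepoint-free elliptic pencil defining a fibration $Y \to \mathbb{P}^1$.

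The principal obstacle is the genericity step in the Hodge-theoretic construction: one must select $\omega \in M^\perp \otimes \mathbb{C}$ satisfying $\omega \cdot \omega = 0$ and $\omega \cdot \overline{\omega} > 0$ so that no $v \in \Lambda_{K3} \setminus M$ lies in $\omega^\perp$, since otherwise $\NS(Y)$ would strictly enlarge $M$ and the prescribed intersection matrix would no longer record all of $\NS(Y)$. Each excluded locus $\{\omega \cdot v = 0\}$ cuts out a proper analytic hypersurface of the relevant period domain, and since $\Lambda_{K3} \setminus M$ is countable, the complement of their union is nonempty. Once this generic $\omega$ is in hand, the remaining conditions reduce to standard K3 theory: adjunction rules out negative curves as soon as $(-2)$-classes are absent, and primitive isotropic nef classes on a K3 surface always give elliptic fibrations.
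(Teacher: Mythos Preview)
Your proposal is correct and follows the same approach as the paper, which packages Nikulin's embedding theorem and period surjectivity as a citation to Morrison \cite[Corollary 2.9]{Morrison}. One phrasing issue to fix: ``reorienting each $D_i$ if needed'' must mean replacing \emph{all} $D_i$ by $-D_i$ simultaneously, since individual sign flips would destroy condition~(1); with that understood, your observation that $H=\sum_i D_i$ has $H^2=2\sum_{i<j}L_{ij}>0$ and $H\cdot D_i=\sum_j L_{ij}>0$ (a zero row would make $L$ degenerate) gives nefness more directly than the paper's block-diagonal argument. For~(4), the citation to Piatetski-Shapiro--\v{S}afarevi\v{c} is misplaced; the relevant input is that a nonzero nef class of square zero on a K3 is basepoint-free (the paper cites \cite[Chapter~2, Proposition~3.10]{Huybrechts}), and primitivity is not needed once you know there are no $(-2)$-curves.
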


\begin{proof}
The main ingredient is \cite[Corollary 2.9, Remark 2.11]{Morrison}: 
\begin{quote}
\emph{If $n \le 11$, then every even lattice of signature $(1,n-1)$ occurs as the N\'eron--Severi group of some complex algebraic K3 surface equipped with the intersection pairing.}
\end{quote}
Its proof relies on the global surjectivity of the period mapping for K3 surfaces \cite[Section 7.4]{Huybrechts} and Nikulin's lattice embedding theorem \cite[Theorem 1.10.1]{Nikulin}.

Since the diagonal entries of $L$ are zero, the quadratic form $\mathbf{x}L\mathbf{x}^T$ is even. We thus get a complex K3 surface $Y$ and divisors $D_1,\ldots,D_n$ satisfying conditions ($1$) and ($2$).
Any negative curve on a K3 surface must have self-intersection $-2$ \cite[Section 2.1.3]{Huybrechts}, so the surface $Y$ must also satisfy condition ($3$).

Choose an ample divisor $H$ on $Y$ such that $\int_Y D_i H$ is nonzero for all $i$. Since $Y$ does not contain any negative curve, Lemma~\ref{lem:C=Nef=NE} determines the nef cone of $Y$. We deduce that  either $D_i$ is nef or $-D_i$ is nef for each $i$, because the self-intersections are zero.
By replacing every $D_i$ by $-D_i$ if necessary, we may suppose without loss of generality that $D_1$ is nef. We show in this case that $D_i$ is nef for every $i$. This gives condition (4), because every nef divisor $D$ of self-intersection zero on a K3 surface is basepoint-free \cite[Chapter 2, Proposition 3.10]{Huybrechts}.

Suppose $D_1,\ldots,D_k$ and $-D_{k+1},\ldots,-D_n$ are nef divisors. If $D_i$ is chosen from the first group and $D_j$ is chosen from the second group, then $L_{ij}=\int_Y D_i D_j$ is nonpositive, so $L_{ij}$ must be zero.
This means that $L$ is a symmetric block-diagonal matrix with nonnegative entries, so $L$ has at least two positive eigenvalues if the decomposition is nontrivial. 
Since $L$ is Lorentzian, the block decomposition is trivial, so every $D_i$ is nef. 
\end{proof}

\begin{remark}
By \cite[Remark 17.2.16]{Huybrechts}, we may find such a K3 surface over $\overline{\QQ}$. This shows that Theorem~\ref{thm:realization over Z} remains valid over any algebraically closed field of characteristic $0$.
\end{remark}

\begin{proof}[Proof of Theorem \ref{thm:realization over Z}]
We prove the following more precise statement:
\begin{quote}
\emph{If $L(\eta)$ is a nonzero integral Lorentzian matrix and the quadratic form $\mathbf{x} L(\eta) \mathbf{x}^T$ does not take the value $-2$ on $\ZZ^n$, then
 $\lambda^{-1}\eta$ is realizable over $\ZZ$ for some positive integer $\lambda$.}
 \end{quote}
Applying this statement to $p\eta$ when $L(\eta)$ is primitive,
we see that $\lambda^{-1}\eta$ is realizable over $\ZZ$ for some positive integer $\lambda$, where $\lambda$ is necessarily $1$ or $p$.

We first consider the case when $L(\eta)$ is nondegenerate. By Proposition~\ref{prop:K3}, there is a complex K3 surface $Y$ and divisors $D_1, \ldots, D_n$ with the stated properties. We choose a pencil in the linear system of $D_i$ for each $i$ to define a map
\[
\varphi: Y \longrightarrow (\PP^1)^n.
\]
By construction, $[\varphi(Y)]=\lambda^{-1} \eta$, where $\lambda$ is the degree of the map $Y \to \varphi(Y)$.

When $L(\eta)$ is degenerate, consider the nondegenerate quadratic form $M(\eta)$ on $\ZZ^n/\ker L(\eta)$ induced by $L(\eta)$. By \cite[Corollary 2.9, Remark 2.11]{Morrison}, there is a K3 surface $Y$ 
and an isometry 
\[
 \ZZ^n/\ker L(\eta)\simeq \NS(Y)_\ZZ.
\]
Let $[D_1],\ldots,[D_n]$ be the divisor classes on $
Y$ corresponding to the image of the standard basis vectors of $\ZZ^n$, so that the intersection matrix of $D_1, \ldots, D_n$ is equal to  $L(\eta)$. As in the proof of Proposition \ref{prop:K3}, replacing every $D_i$ by $-D_i$ if necessary, we may assume that each $D_i$ is a nef divisor. By \cite[Chapter 2, Proposition 3.10]{Huybrechts}, each $D_i$ is basepoint-free, and we can apply the same construction as in the nondegenerate case. 
\end{proof}

\section{Realizability in Grassmannian and other varieties}\label{sec:Grass}

Let $\mathrm{B}_k(X) \subseteq \H_{2k}(X,\RR)$ be the subspace spanned by the $k$-dimensional algebraic cycles on $X$ with real coefficients. We consider the closed subset
\[
\mathscr{R}_k(X)\coloneq \{\text{$k$-dimensional classes on $X$ that are realizable over $\RR$}\}\subseteq \mathrm{B}_k(X).
\]
We write $\mathscr{R}^k(X)$ for $\mathscr{R}_{d-k}(X)$, where $d$ is the dimension of $X$.
In general, $\mathscr{R}_k(X)$ may exhibit sporadic structures in that it may be different from the closure of its interior in $\mathrm{B}_k(X)$.

\begin{proposition}
For any smooth projective variety $X$, we have    \[
    \mathcal{R}^1(X)^\circ=\mathrm{Mov}^1(X)^\circ,
    \]
    where $\mathrm{Mov}^1(X)^\circ$ denotes the interior of the movable cone of divisors in $\NS(X)_\RR$.\footnote{Recall that the \emph{movable cone of divisors} is the closure of the cone generated by classes of effective Cartier divisors $L$ such that the base locus of the complete linear system $|L|$ has codimension at least $2$. We thank Chenyang Xu for suggesting this statement.} 
\end{proposition}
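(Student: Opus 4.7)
The plan is to prove the two inclusions separately, using the divisorial Zariski decomposition of Nakayama and Boucksom as the main technical input; we may assume $\dim X\ge 2$, since the curve case is immediate from the equalities $\mathrm{Mov}^1=\mathrm{Nef}^1=\overline{\Eff}^1$ in that setting.

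For the inclusion $\mathrm{Mov}^1(X)^\circ\subseteq\mathcal{R}^1(X)^\circ$, the idea is to approximate an interior movable class by rational ones and realize each approximant by a prime divisor. Given $[D]\in\mathrm{Mov}^1(X)^\circ$, I would choose rational classes $[D_n]\to[D]$ with $[D_n]\in\mathrm{Mov}^1(X)^\circ\cap\NS(X)_\QQ$, each of which is big and movable. For each $n$ some integer multiple $m_n D_n$ is an integral Cartier divisor whose linear system $|m_n D_n|$ has base locus of codimension at least $2$ and whose associated rational map has image of dimension $\dim X$. On a resolution of indeterminacy $\pi_n\colon X_n'\to X$ the pullback decomposes as $\pi_n^{*}(m_n D_n)=M_n+F_n$ with $M_n$ base-point-free, and the divisorial part of $F_n$ is $\pi_n$-exceptional because the divisorial fixed part of $|m_n D_n|$ on $X$ vanishes by movability. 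Bertini's irreducibility theorem applied to the big base-point-free system $|M_n|$ on $X_n'$ produces an irreducible $V_n'\in|M_n|$, and its image $V_n=\pi_n(V_n')$ is a prime divisor on $X$ with $[V_n]=\pi_{n*}[M_n]=m_n[D_n]$. Hence $[D_n]=\tfrac{1}{m_n}[V_n]\in\mathcal{R}^1(X)$, and in the limit $[D]\in\mathcal{R}^1(X)$. Openness of $\mathrm{Mov}^1(X)^\circ$ then upgrades this to the desired inclusion.

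For the reverse inclusion $\mathcal{R}^1(X)^\circ\subseteq\mathrm{Mov}^1(X)^\circ$, the plan is to show that the Nakayama--Boucksom numerical vanishing orders $\sigma_E$ vanish on any open subset of $\mathcal{R}^1(X)$. Take $[D]\in\mathcal{R}^1(X)^\circ$ and an open neighborhood $U\subseteq\mathcal{R}^1(X)$; since $\mathcal{R}^1(X)\subseteq\overline{\Eff}(X)$, the set $U$ lies in the big cone $\overline{\Eff}(X)^\circ$. For each prime divisor $E$, the function $\sigma_E\colon\overline{\Eff}(X)^\circ\to\RR_{\ge 0}$ is continuous on the big cone, and the movable cone admits the characterization $\mathrm{Mov}^1(X)\cap\overline{\Eff}(X)^\circ=\{\alpha:\sigma_E(\alpha)=0\text{ for all prime }E\}$. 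Fix a prime $E$ and take $[D']\in U$ off the ray $\RR_{\ge 0}[E]$. Writing $[D']=\lim_n\lambda_n[V_n]$ with $V_n$ prime and $\lambda_n\ge 0$, infinitely many $V_n$ must differ from $E$; for any such $n$, $\sigma_E(\lambda_n[V_n])=0$, because the coefficient of $E$ in a prime divisor $V_n\ne E$ is zero. Continuity of $\sigma_E$ on the big cone then forces $\sigma_E([D'])=0$, and since $U\setminus\RR_{\ge 0}[E]$ is dense in $U$ when $\dim\NS(X)_\RR\ge 2$, another application of continuity yields $\sigma_E\equiv 0$ on $U$. Ranging over all primes $E$ gives $U\subseteq\mathrm{Mov}^1(X)$, so $[D]\in\mathrm{Mov}^1(X)^\circ$.

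The main obstacle will be the invocation of the Nakayama--Boucksom theory, especially the continuity of $\sigma_E$ on the big cone, which is the delicate technical heart of that theory. A secondary subtlety in the first inclusion lies in producing a prime divisor $V_n=\pi_n(V_n')$ with the correct class on $X$: Bertini's irreducibility theorem supplies irreducibility since bigness of $m_n D_n$ makes $\phi_{|m_n D_n|}$ birational onto its image for $m_n\gg 0$, and the equality $[V_n]=m_n[D_n]$ relies on $\pi_{n*}F_n=0$, which is precisely where movability of $D_n$ enters.
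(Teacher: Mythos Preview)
Your proof is correct. The two directions differ from the paper's treatment as follows.

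For $\mathrm{Mov}^1(X)^\circ\subseteq\mathcal{R}^1(X)^\circ$, the paper's argument is essentially the same as yours: pick a big class with $\mathrm{Bs}|rD|$ of codimension $\ge 2$ and apply Bertini irreducibility. Your detour through a resolution of indeterminacy is unnecessary (once $\mathrm{Bs}|m_nD_n|$ has codimension $\ge 2$ one can apply Bertini directly on $X$), but it is harmless and the pushforward bookkeeping you do is correct. Both arguments implicitly use that a big rational class in $\mathrm{Mov}^1(X)$ has $\mathrm{Bs}|mD|$ of codimension $\ge 2$ for some $m$; this follows from the Noetherian stabilization $\mathbf{B}(D)=\mathrm{Bs}|m!D|$ for $m\gg 0$ together with the fact that $\sigma_E(D)=0$ forces $\mathrm{ord}_E|mD|=0$ for some $m$ (integrality).

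For $\mathcal{R}^1(X)^\circ\subseteq\mathrm{Mov}^1(X)^\circ$, the approaches genuinely diverge. You invoke the Nakayama--Boucksom machinery: continuity of $\sigma_E$ on the big cone and the characterization $\mathrm{Mov}^1\cap\mathrm{Big}=\{\sigma_E=0\ \forall E\}$. The paper instead gives an elementary argument that avoids this theory entirely: any big \emph{prime} divisor $D$ is movable, because bigness gives a smallest $m$ with $mD\sim E$ for some effective $E\neq mD$, minimality forces $E$ to have no component equal to $D$, and then $\mathrm{Bs}|mD|\subseteq D\cap\mathrm{supp}(E)$ has codimension $\ge 2$. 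Since classes of big prime divisors are dense in $\mathcal{R}^1(X)^\circ$ and $\mathrm{Mov}^1(X)$ is closed, the inclusion follows. The paper's route is shorter and self-contained; yours is conceptually clean but imports a nontrivial continuity statement.

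One small omission: your second direction only treats $\dim\NS(X)_\RR\ge 2$ (you need $U\setminus\RR_{\ge 0}[E]$ dense). When the Picard rank is $1$, both cones equal the single pseudoeffective ray and the statement is immediate; it would be worth saying so explicitly.
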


In particular, when $X$ is a surface, the interior of $\mathcal{R}^1(X)$ is precisely the ample cone of $X$, and every ray in $\mathcal{R}^1(X) \setminus \overline{\mathcal{R}^1(X)^\circ}$ is isolated.

\begin{proof}
    Notice that when $[D]$ is in the interior of either $\mathrm{Mov}^1(X)$ or $\mathcal{R}^1(X)$, then $[D]$ is in the interior of the pseudoeffective cone, and hence $D$ is big \cite[Theorem 2.2.26]{Lazarsfeld1}.

    Suppose that $D$ is an irreducible divisor whose class is 
     in $\mathcal{R}^1(X)^\circ$. Since $D$ is big, there is a smallest integer $m$ such that  $mD$ is linearly equivalent to $E$ for some effective divisor $E$ different from $mD$. By the minimality of $m$, no component of $E$ is equal to $D$. Thus, the base locus of the linear system $|D|$ must have codimension at least $2$, and hence $D$ is movable. 

Suppose that $D$ is an effective divisor whose class is in $\mathrm{Mov}^1(X)^\circ$. Since $D$ is big, there is a positive integer $r$ such that 
the base locus of the linear system $|rD|$ has codimension at least $2$ and the induced rational map $\varphi: X\dashrightarrow \PP^n$ is generically injective. Let $U$ be the open subset of $X$ where $\varphi$ is well-defined, and let $H$ be a general hyperplane of $\PP^n$. Then, by Bertini's irreducibility theorem, the closure of $U \cap \varphi^{-1}H$
 is an irreducible divisor of $X$ linearly equivalent to $rD$, so $[D]$ is realizable over $\QQ$.
\end{proof}

It follows that the interior of $\mathcal{R}^1(X)$ is convex for any smooth projective variety $X$.
On the other hand, the interior of $\mathcal{R}^2(X)$ is typically not convex, as we have seen in the case when $X=(\PP^1)^4$.

In the following example, we observe that the interior of $\mathscr{R}_1(X)$ need not be connected.

\begin{example}
A general quintic threefold $Y$ in $\PP^4$ contains a line $L$ with the normal bundle 
\[
N_{L/Y}\cong \mathcal{O}(-1)\oplus \mathcal{O}(-1).
\]
See, for example, \cite[Proposition 6.30]{EisenbudHarris3264}. 
Let $X$ be the blow up of $Y$ along $L$, and denote the pullback of the hyperplane class  by $H$ and the exceptional divisor by $E$, so that
\[
E\simeq \PP^1\times \PP^1 \ \  \text{and}  \ \ N_{E/Y}\simeq \mathcal{O}_{\PP^1}(-1)\boxtimes\mathcal{O}_{\PP^1}(-1).
\]
The Chow ring of $X$ is determined by the relations
\[
    \int_{Y}H^3=5, \quad \int_{Y}H^2E=0, \quad \int_{Y}HE^2=-1, \quad  \int_{Y}E^3=2, \ \ \text{and} \ \  H^2+10HE+5E^2=0.
\]
The effective cone of divisors on $X$ is generated by $H-E$ and $E$, and its dual cone of movable curves on $X$ is given by
\[
\textrm{Mov}_1(X)=\textrm{cone}\Bigg(\frac{H^2}{5}, \frac{H^2}{5}-HE\Bigg) \subseteq \mathscr{R}_1(X).
\]
It is easy to check that the class of 
any irreducible curve in $X$ not contained in $E$ is in this cone. 
In general,  the cone of movable curves is the closure of the pushforwards of complete intersection of ample divisors under projective birational mappings \cite{BDPP13}. In this case, one can check that 
any class in this cone can be approximated by the classes of complete intersections of ample divisors on $X$. This gives one component of $\mathscr{R}_1(X)^\circ$. 
The other component of $\mathscr{R}_1(X)^\circ$ is given by the curves contained in $E\simeq \PP^1 \times \PP^1$:
\[
\iota_*\textrm{Mov}_1(E)=\textrm{cone}\Bigg(HE, \frac{H^2}{5}+HE\Bigg) \subseteq \mathscr{R}_1(X).
\]
Together they generate the effective cone of curves in $X$.
The shaded region in Figure~\ref{fig:curves} shows the classes of irreducible curves $C$ in $X$:
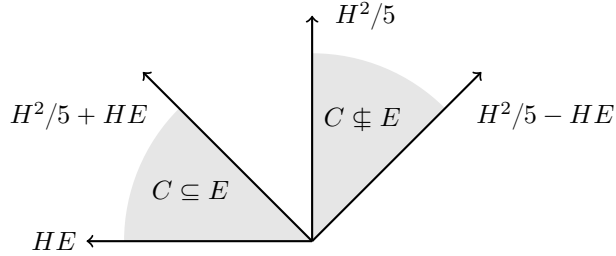
\begin{figure}[htbp]
  \centering
\[
\begin{tikzpicture}[scale=2.5]
  \fill[gray!20] (0,0) -- (-1,0) arc[start angle=180,end angle=135,radius=1] -- cycle;
  \node at ({0.7*cos(157.5)},{0.7*sin(157.5)}) {$C\subseteq E$};
  \fill[gray!20] (0,0) -- (0,1) arc[start angle=90,end angle=45,radius=1] -- cycle;
  \node at ({0.7*cos(67.5)},{0.7*sin(67.5)}) {$C\nsubseteq E$};
  \draw[->,thick] (0,0) -- (-1.2,0) node[left]{$HE$};
  \draw[->,thick] (0,0) -- (-0.9,0.9) node[pos=0.6,anchor=south east,xshift=-20pt]{$H^2/5+HE$};
  \draw[->,thick] (0,0) -- (0,1.2) node[above,right,xshift=5pt]{$H^2/5$};
  \draw[->,thick] (0,0) -- (0.9,0.9) node[pos=0.6,anchor=south west,xshift=20pt]{$H^2/5-HE$};
\end{tikzpicture}
\]
  \caption{The classes of irreducible curves in $X$}\label{fig:curves}
\end{figure}

\end{example}


The \emph{pseudoeffective cone} in $\mathrm{B}_k(X)$ is the closure of the cone generated by the $k$-dimensional effective cycles on $X$,
and the elements of this cone are called \emph{pseudoeffective}.
An element $\eta \in \mathrm{B}_k(X)$ is called \emph{universally pseudoeffective} if $\pi^*\eta$ is pseudoeffective for any morphism $\pi:Y \to X$ from a smooth projective variety $Y$. 
The cone of $k$-dimensional universally pseudoeffective classes on $X$ 
is a higher-codimension analogue of the cone of nef divisors.\footnote{
A $k$-dimensional universally pseudoeffective class is \emph{nef}: Its  intersection with any codimension $k$ subvariety of $X$ is nonnegative.  Grothendieck asked whether nef classes are always pseudoeffective (and thus universally pseudoeffective). Debarre, Ein, Lazarsfeld, and Voisin provided a negative answer to this question in \cite{DELV} when $k=2$.}
In \cite{Fulger-Lehmann}, Fulger and Lehmann show 
 several desirable properties of the cone of universally pseudoeffective classes.
A divisor class is universally pseudoeffective if and only if it is nef,
and a curve class is universally pseudoeffective if and only if it is in the closure of the cone of movable curves. 

We propose a general conjecture concerning
$2$-dimensional realizable classes on a smooth complex projective variety $X$. The idea is that $\mathscr{R}_2(X)$ has a predictable shape within the cone of universally pseudoeffective cycles. Set $\H^{1,1}(X)_{\mathbb{R}}\coloneq \H^2(X,\RR) \cap \H^{1,1}(X)$.

\begin{conjecture}\label{con:main}
    The following are equivalent for any universally pseudoeffective 
     $\eta\in \mathrm{B}_2(X)$.
    \begin{enumerate}[(1)]\itemsep 5pt
        \item The class $\eta$ is realizable over $\mathbb{R}$.
        \item 
        The matrix representation of the bilinear form
        \[
            \H^{1,1}(X)_{\mathbb{R}}\times \H^{1,1}(X)_{\mathbb{R}}  \longrightarrow \mathbb{R}, \qquad
            (\alpha,\beta) \longmapsto \int_{\eta}\alpha\wedge\beta
        \]
        has at most one positive eigenvalue.
    \end{enumerate}
\end{conjecture}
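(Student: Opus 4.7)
The direction (1) $\Rightarrow$ (2) should follow from the Hodge Index Theorem, extending Proposition~\ref{lem:at-most-one-positive-eigenvalue}. Suppose first that $\eta = [S]$ for an irreducible surface $S \subseteq X$, and let $\nu: \widetilde{S} \to S$ be a resolution of singularities. For any $\alpha, \beta \in \H^{1,1}(X)_{\mathbb{R}}$, the projection formula gives
\[
\int_\eta \alpha \wedge \beta \;=\; \int_{\widetilde{S}} \nu^* \iota^* \alpha \cdot \nu^* \iota^* \beta,
\]
where $\iota:S\to X$ is the inclusion. The pullback lands in $\NS(\widetilde{S})_{\mathbb{R}}$, on which the intersection form has signature $(1, \rho - 1)$ by Hodge Index. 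Hence the induced bilinear form on $\H^{1,1}(X)_{\mathbb{R}}$ has at most one positive eigenvalue. For $\eta$ realizable only over $\mathbb{R}$, the eigenvalue condition is closed and passes to limits.

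For the substantive direction (2) $\Rightarrow$ (1), the plan is to adapt the strategy of Theorem~\ref{thm:nondegenerateSteenrod}. By density, one reduces to the case that $\eta$ is rational and the form $Q(\alpha,\beta) = \int_\eta \alpha \wedge \beta$ is nondegenerate on $\H^{1,1}(X)_{\mathbb{Q}} / \ker Q$ after clearing denominators. By Theorem~\ref{prop:exists surface}, choose a prolific surface $Y$ whose Picard rank exceeds $\rk Q + 3$. Apply Lemma~\ref{lem:lattice embedding} (Meyer's theorem) to produce a $\mathbb{Q}$-linear map $\phi: \H^{1,1}(X)_{\mathbb{Q}} \to \NS(Y)_{\mathbb{Q}}$ such that $\int_Y \phi(\alpha)\cdot \phi(\beta) = Q(\alpha,\beta)$ for all $\alpha,\beta$. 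One then argues, using the universal pseudoeffectivity hypothesis and the fact that $Y$ has no negative curves (Lemma~\ref{lem:C=Nef=NE}), that $\phi$ can be arranged to send nef classes on $X$ to nef classes on $Y$: the sign of each $\phi(\alpha)$ is fixed by pairing against a chosen ample class, and the universal pseudoeffectivity ensures the relevant intersection numbers have the correct sign after possibly replacing $\alpha$ by $-\alpha$.

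Now choose ample divisors $H_1,\dots,H_r$ on $X$ whose classes span $\NS(X)_{\mathbb{Q}}$. Their images $\phi(H_i)$ are nef on $Y$, hence semiample by the prolific property of $Y$. After clearing denominators, these give basepoint-free linear systems and thus morphisms $\psi_i: Y \to \mathbb{P}^{N_i}$. The \emph{main obstacle} is the final step: assembling the $\psi_i$ into a single morphism $\Psi: Y \to X$ whose image realizes a positive multiple of $\eta$. In the case $X = \mathbb{P}^{\mathbf{m}}$ treated in Theorem~\ref{thm:nondegenerateSteenrod}, this step is automatic by the universal property of a product, and one simply sets $\Psi = \prod \psi_i$. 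For general $X$, no such universal property is available, and two partial approaches suggest themselves: either embed $X$ via a very ample complete linear system into a single projective space, construct a surface mapping to that space with the right pullback cohomology data, and argue it factors through $X$ (using Hilbert scheme or deformation arguments to move the image inside $X$); or, when $X$ carries additional structure (e.g.\ spherical or rationally connected), use that structure to build $\Psi$ directly, as is done for Grassmannians in Theorem~\ref{thm:Grassmannian}. Controlling the \emph{image} inside $X$, rather than merely controlling intersection numbers with divisor classes of $X$, is the fundamental difficulty that keeps the statement conjectural.
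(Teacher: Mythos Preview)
Your treatment matches the paper's. Note that this statement is labeled a \emph{Conjecture} in the paper, not a theorem: the paper only asserts and proves the direction (1) $\Rightarrow$ (2), via exactly the Hodge index argument you give (pulling back to a resolution $\widetilde{S}$ of a realizing surface and passing to limits). For (2) $\Rightarrow$ (1), the paper offers no proof in general; it observes only that Theorem~\ref{thm:nondegenerateSteenrod} settles the case $X = \PP^{\mathbf{m}}$ and that Theorem~\ref{thm:Grassmannian} handles Grassmannians. Your diagnosis of the obstacle---that the prolific-surface strategy produces divisors on $Y$ with the correct intersection numbers but no mechanism to assemble a morphism $Y \to X$ when $X$ is not a product of projective spaces---is exactly why the statement remains conjectural, and the paper does not claim otherwise.
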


That  ($1$) implies  ($2$) follows from 
the Hodge--Riemann relations for $\H^{1,1}(S)$, where $S \to X$ is any map from a smooth projective surface. 
Since every Lorentzian matrix is a limit of Lorentzian matrices with rational entries, Theorem~\ref{thm:nondegenerateSteenrod} confirms Conjecture~\ref{con:main} when $X$ is a product of projective spaces.

The following example demonstrates that, in Conjecture \ref{con:main}, it is essential to require that $\eta$ is universally pseudoeffective rather than merely pseudoeffective. 

\begin{example}\label{ex:UPseudoNecessary}
Let $\pi: X\to \PP^3$ be the blowup of $\PP^3$ along a line $L$,
and consider the class
\[
\eta(a,b)=a[H]+b[E] \in \H_4(X,\ZZ),
\]
where $H$ is the pullback of a hyperplane and $E$ is the exceptional divisor. The Chow ring of $X$ is determined by the relations
\[
\int_X H^3=1, \ \ \int_X H^2E=0, \ \  \int_X HE^2=-1, \ \ \int_X E^3=-2, \ \ \text{and} \ \ (H-E)^2=0.
\]
Using these, it is straightforward to check that the bilinear form of $\eta(a,b)$ has at most one positive eigenvalue on $H^2(X,\RR)$ for any $a$ and $b$.
On the other hand, $\eta(a,b)$ is realizable over $\ZZ$ if and only if
\[
\Big(\text{$a>0$ and $b \le 0$ and $a+b>0$}\Big) \ \ \text{or} \ \ (a,b)=(1,-1) \ \ \text{or} \ \ (a,b)=(0,1).
\]
Indeed, if $b>0$, any fiber over a point in $L$ intersects $\eta(a,b)$ negatively, so an effective cycle with class $\eta(a,b)$ should contain  $E$ as a component. 

\end{example}

The following example demonstrates that a universally pseudoeffective  $\eta\in\H_4(X, \ZZ)$ satisfying the condition ($2$) in Conjecture \ref{con:main} need not be realizable over $\QQ$.

\begin{example}[Mumford's example]
Take a smooth projective surface $Y$ and a divisor $D$ on $Y$ with self-intersection zero such that the intersection of $D$ and any irreducible curve in $Y$ is positive \cite[Example 1.5.2]{Lazarsfeld1}. 
Let $Z$ be a smooth projective curve, and
take the nef class $\eta$ on $X \coloneq Y \times Z$ given by $\pi^*(D)$, 
where $\pi$ is the projection $X \to Y$.
As any nef class is a limit of ample classes \cite[Section 1.4]{Lazarsfeld1}, the class $\eta$ is realizable over $\RR$. 

We check that  $\eta$ is not realizable over $\QQ$.
 Suppose $\lambda \pi^*(D)$ is the class of an irreducible surface $S$ for some positive rational number $\lambda$.
 By the projection formula, the intersection of $S$ with a fiber of $\pi$ has degree zero, so $S$ must be of the form $\pi^{-1}(C)$ for some irreducible curve $C$ in $Y$. Since the intersection of $D$ and $C$ in $Y$ is positive, the intersection of $\pi^*(D)$ and $S$ is homologous to a positive sum of fibers of $\pi$, contradicting that the self-intersection of $D$ is zero.
\end{example}

We give an explicit universally pseudoeffective $\eta \in \H_4(X,\ZZ)$ for a $6$-dimensional smooth projective toric variety $X$ for which the validity of  Conjecture~\ref{con:main} is unknown.
See  \cite{HuhCDM} for background and unexplained terms.
We recall that, if $X$ is a toric variety, or more generally a spherical variety, then a class is universally pseudoeffective if and only if it is nef \cite[Example 4.5]{Fulger-Lehmann}.

\begin{example}[Matroid classes]


Let 
$X_n$ be the complex smooth projective toric variety of the $n$-dimensional permutohedron. 
The Bergman fan of any simple rank $3$ matroid $\mathrm{M}$ on $n+1$ elements defines a class $\Delta_\mathrm{M}\in \H_4(X_n, \ZZ)$ that is nef and effective \cite[Section 5]{HuhCDM}.
The Hodge--Riemann relations for $\mathrm{M}$ in \cite[Theorem 1.4]{JuneKarim} shows that $\Delta_\mathrm{M}$ satisfies the condition ($2$) of Conjecture~\ref{con:main}. 
If the matroid $\mathrm{M}$ is realizable over $\CC$, then $\Delta_\mathrm{M}$ is realizable over $\mathbb{Z}$ \cite[Section 6]{HuhCDM}.
Yu observes in \cite{JosephineYu} that, if the matroid $\mathrm{M}$ is not realizable over $\CC$, then
$\Delta_\mathrm{M}$ is not realizable over $\mathbb{Q}$.
Take any $\mathrm{M}$ that is not realizable over $\CC$. For example, take the matroid of the Fano plane $\PP^2(\mathbb{F}_2)$ on seven elements.
Is $\Delta_\mathrm{M}$ realizable over $\RR$?
\end{example}

When $X$ is a homogeneous space $G/P$ of Picard rank $1$, then every effective class is universally pseudoeffective, and the condition ($2$) of Conjecture~\ref{con:main} is automatically satisfied for all effective classes.
We verify  Conjecture~\ref{con:main} for Grassmannians by showing that
any effective class in $\H_4(\text{Gr}(k, n), \ZZ)$ is realizable over $\ZZ$, unless it is a multiple of some Schubert class. 

We suppose $2\leq k\leq n-2$, since otherwise the assertion is trivial.
Fix subspaces 
\[
F_i\coloneq \text{span}(\mathbf{e}_1,\mathbf{e}_2,\ldots,\mathbf{e}_i) \subseteq \mathbb{C}^n \ \  \text{and} \ \ 
G_i \coloneq \text{span}(\mathbf{e}_n,\mathbf{e}_{n-1},\ldots,\mathbf{e}_{i+1}) \subseteq \mathbb{C}^n,
\]
and consider the corresponding Schubert varieties in $\text{Gr}(k, n)$:
\begin{align*}
&S_1\coloneq\{\text{$k$-dimensional subspaces $V \subseteq \CC^n$ such that $F_{k-2}\subseteq V\subseteq F_{k+1}$}\}, \\
&S_2\coloneq \{\text{$k$-dimensional subspaces $V \subseteq \CC^n$ such that $F_{k-1}\subseteq V\subseteq F_{k+2}$}\},\\
&T_1\coloneq\{\text{$k$-dimensional subspaces $V \subseteq \CC^n$ such that $\dim V \cap G_{k-1}\ge 2$}\}, \\
&T_2\coloneq \{\text{$k$-dimensional subspaces $V \subseteq \CC^n$ such that $\dim V \cap G_{k+1} \ge 1$}\}.
\end{align*}
The classes of $S_1$ and $S_2$ form a $\ZZ$-basis of $\H_4(\text{Gr}(k, n),\ZZ)$.
For integers $a_1$ and $a_2$, we set
\[
\eta(a_1,a_2)\coloneq a_1 \, [S_1] + a_2 \,  [S_2].
\]
The intersections between cycles of dimension $2$ and codimension $2$ are given by
\[
[S_1] \cdot [T_1]=[S_2] \cdot [T_2]=[\text{point}] \ \  \text{and} \ \  [S_1] \cdot [T_2]=[S_2] \cdot [T_1]=0.
\]


\begin{theorem}\label{thm:Grassmannian}
The  class $\eta(a_1,a_2)$ is realizable over $\ZZ$ 
if and only if one of the following holds:
\begin{enumerate}[(1)]\itemsep 5pt
        \item  $a_1>0$ and $a_2>0$, or $(a_1,a_2)=(1,0)$ or $(a_1,a_2)=(0,1)$, when $k=2$ and $n=4$.
        \item $a_1\geq 0$ and $a_2>0$, or $(a_1,a_2)=(1,0)$, when $k=2$ and $n>4$.
        \item $a_2\geq 0$ and $a_1>0$, or $(a_1,a_2)=(0,1)$, when $k=n-2$ and $n>4$.
        \item $a_1,a_2\geq 0$ and $(a_1,a_2)\neq (0,0)$, when $n-2>k>2$.    
\end{enumerate}    
\end{theorem}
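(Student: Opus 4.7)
The plan is to prove necessity by a dimension count on an incidence variety, ultimately reducing to case (1) on the quadric fourfold $\text{Gr}(2,4) = Q^4$, and to prove sufficiency by explicit constructions inside nested Schubert subvarieties and sub-Grassmannians.

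For necessity, the bounds $a_1, a_2 \geq 0$ follow from $[S] \cdot [T_j] = a_j \geq 0$ for generic Schubert translates of $T_j$ and irreducible $S$ not contained in them. To rule out $(a_1, 0)$ with $a_1 \geq 2$ when $k = 2$ (cases (1) and (2)), assume $[S] = a_1 [S_1]$ and form the incidence $\mathcal{I} = \{(V,L) \in S \times \PP^{n-1} : L \subseteq V\}$, which is a $\PP^1$-bundle of dimension $3$ over $S$. The vanishing $[S_1] \cdot [T_2] = 0$ translates into the assertion that for a generic $(n-4)$-plane $\PP(G) \subseteq \PP^{n-1}$, the image of $\mathcal{I} \to \PP^{n-1}$ must miss $\PP(G)$; hence this image has dimension at most $2$. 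Analyzing this image as a ruled surface in $\PP^{n-1}$, its linear span $\PP(W')$ satisfies $\dim W' \geq 3$, and all $V \in S$ lie in $W'$, so $S \subseteq \text{Gr}(2, W')$. If $\dim W' = 3$ then $S = \text{Gr}(2,W') \cong \PP^2$ and $a_1 = 1$; otherwise $\dim W' = m \geq 4$, and $S$ is a surface in $\text{Gr}(2,m)$ whose class there is still $a_1 [S_1]$, reducing by descending induction on $n$ to case (1). For the base case $Q^4$, the image is a ruled surface in $\PP^3$---either $\PP^2$ (again giving $a_1 = 1$), a smooth quadric, or a quadric cone---and matching the unique ruling inherited from the irreducible $S$ against the two rulings of a quadric surface (or the cone structure) yields a contradiction unless $S$ is itself a Schubert $\PP^2$. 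The dual argument handles $(0, a_2)$ with $a_2 \geq 2$ when $k = n-2$.

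For sufficiency, the Schubert varieties $S_1, S_2$ realize the basis classes. When $k \geq 3$ (cases (3), (4)), the Schubert variety $\{V : F_{k-3} \subseteq V \subseteq F_{k+1}\} \cong \PP^3$ contains $S_1$ as a linear $\PP^2$, and a general irreducible degree-$a$ hypersurface in this $\PP^3$ realizes $a[S_1]$ by Bertini. Dually, when $n-k \geq 3$ (cases (2), (4)), the Schubert $\PP^3$ given by $\{V : F_{k-1} \subseteq V \subseteq F_{k+3}\}$ realizes $b[S_2]$ for every $b \geq 1$. For $(a_1, a_2)$ with both positive and $2 \leq k \leq n-2$, embed the sub-Grassmannian $Q = \{V : F_{k-2} \subseteq V \subseteq F_{k+2}\} \cong Q^4$ into $\text{Gr}(k,n)$ and transfer an irreducible surface of bi-class $(a_1, a_2)$ from $Q^4$; this reduces the problem to case (1) on $Q^4$. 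On $Q^4$, the diagonal classes $(d, d)$ are realized by irreducible degree-$d$ surfaces in a smooth hyperplane section $Q^3$ (whose Picard rank one forces every surface class to be a multiple of $[S_1] + [S_2]$), while off-diagonal $(a_1, a_2)$ with $a_1 \neq a_2$ are constructed via rational normal scrolls and suitable images of $\PP^1 \times \PP^1$ or Hirzebruch surfaces $\mathbb{F}_n$ embedded in $Q^4 \subseteq \PP^5$.

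The main obstacle is the off-diagonal sufficiency on $Q^4$ in case (1): because $Q^4$ has Picard rank one and contains no smooth threefold of higher Picard rank, a standard Bertini argument on an auxiliary variety is unavailable. The required irreducible surfaces must be built directly on $Q^4$---for instance by deforming reducible configurations such as $a_1 S_1 \cup a_2 S_2$ to irreducible surfaces lying in a well-chosen component of the surface Hilbert scheme---and irreducibility of the generic member must then be verified by a local deformation-theoretic analysis at the reducible special fiber.
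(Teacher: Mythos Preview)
Your proposal has two genuine gaps.

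\textbf{Sufficiency on $\text{Gr}(2,4)$.} You explicitly acknowledge this as ``the main obstacle'' and do not resolve it. The suggestions---scrolls, Hirzebruch images, smoothing the reducible cycle $a_1S_1\cup a_2S_2$ via Hilbert-scheme deformation theory---are all plausible strategies, but none is carried out, and the deformation argument in particular would require showing that the relevant component of the Hilbert scheme has an irreducible general member, which is not automatic. The paper sidesteps this entirely with a one-line explicit construction: for $a_1\ge a_2>0$, take the closure in $\text{Gr}(2,4)$ of
\[
\bigl\{\,\mathrm{span}\bigl(\mathbf{e}_1+x^{a_1}\mathbf{e}_3+x^{a_2}\mathbf{e}_4,\ \mathbf{e}_2+y\mathbf{e}_3+x\mathbf{e}_4\bigr)\ :\ x,y\in\CC\,\bigr\},
\]
and check by direct elimination that its intersection numbers with $T_1$ and $T_2$ are $a_1$ and $a_2$. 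This is the missing idea in your plan.

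\textbf{Necessity for $\eta(a_1,0)$ in $\text{Gr}(2,n)$.} Your descending induction on $n$ has a hole: you pass from $\text{Gr}(2,n)$ to $\text{Gr}(2,\dim W')$, but nothing you wrote forces $\dim W' < n$, so the induction may stall. Moreover, your base case in $\text{Gr}(2,4)$ asserts that the swept surface $Z\subseteq\PP^3$ is ``either $\PP^2$, a smooth quadric, or a quadric cone''; this enumeration is false for ruled surfaces in general (they exist in every degree), and you have not used the fact that actually makes the argument work, namely that $Z$ carries a \emph{two}-dimensional family of lines (parametrized injectively by $S$), which alone forces $Z$ to be a plane. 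The paper's argument avoids both issues at once: having shown that $Z$ is a surface, it observes that a general hyperplane section $Z\cap H$ contains $a_1$ distinct lines (coming from $[S]\cdot[T_1]=a_1$) and then applies Bertini's irreducibility theorem to conclude $a_1=1$ directly, with no induction and no classification of ruled surfaces.
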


Compare Theorem~\ref{thm:Grassmannian} with the conjectural bound $a_1 \le 3a_2$ for homology classes of smooth surfaces in $\text{Gr}(2,4)$ with $a_1, a_2>0$ in
\cite{Dolgachev-Reider}. In \cite{Gross}, Gross proves  that the following holds for the class $\eta(a_1,a_2)$ of  any smooth surface $S \subseteq \text{Gr}(2,4)$ with $a_1, a_2>0$:
If $S$ is not of general type or $a_2 \le 19$, then $a_1 \le 3a_2$.

\begin{proof}
Suppose that $\eta(a_1,a_2)$ is realizable over $\ZZ$. We must have $a_1,a_2\geq 0$ and $(a_1,a_2) \neq (0,0)$ because the effective cone of $\text{Gr}(k, n)$ is generated by $[S_1]$ and $[S_2]$. 

We first show that $\eta(a_1,0)$ is realizable over $\ZZ$ in $\text{Gr}(2,n)$ only if $a_1=1$. By Grassmannian duality, this implies that $\eta(0,a_2)$ is realizable over $\ZZ$ in $\text{Gr}(n-2,n)$ only if $a_2=1$. These results follow from, for example, the characterisation of Schur rigid classes in \cite{robles2012rigid}. 
For the sake of completeness, we provide a direct argument.
Suppose $Y$ is an irreducible surface in $\text{Gr}(2,n)$  with $[Y]=\eta(a_1,0)$. 
Consider the union of projective lines
\[
Z=\bigcup_{V \in Y} \PP(V) \subseteq \PP(\CC^n).
\]
Since $Z$ is the image of a projective bundle over $Y$, it is irreducible.
We use Kleiman's generic transversality \cite{Kleiman} to extract information on $Z$ from $[Y]$:
\begin{enumerate}[(1)]\itemsep 5pt
\item The intersection of  $Y$ and a general translate  of $T_2$  is empty. Thus, there is a codimension $3$ linear subspace of $\PP(\CC^n)$ that does not intersect $Z$. Therefore, $Z$ is a surface.
\item The intersection of $Y$ and a general translate of $T_1$ consists of $a_1$ distinct points. Thus, the intersection of $Z$ with a general hyperplane of $\PP(\CC^n)$ contains $a_1$ distinct lines.
\end{enumerate}
By Bertini's theorem 
\cite[Theorem 7.1]{jouanolou1983}, this general hyperplane section of $Z$ is an irreducible curve, so $a_1$ is necessarily $1$.

We next show that $\eta(a_1,0)$ is realizable over $\ZZ$ in $\text{Gr}(k,n)$ for all $a_1>0$ when $k>2$.
By Grassmannian duality, this implies that $\eta(0,a_2)$ is realizable over $\ZZ$ in $\text{Gr}(k,n)$ for all $a_2>0$ when $n-2>k$.
Suppose $k>2$, and consider the Schubert variety
\[
\PP^k \simeq \{\text{$k$-dimensional subspaces $V \subseteq \CC^n$ with $ V\subseteq F_{k+1}$}\} \subseteq \text{Gr}(k,n).
\]
Since $k>2$, there is an irreducible surface $Y$ of degree $a_1$ in this $\PP^k$. Clearly, $[Y]=\eta(a_1,0)$.

    When $a_1,a_2>0$,
     the realizability can be reduced to the case when $n=4$ and $k=2$: 
     We have
\[
\text{Gr}(2,4)\simeq \{\text{$k$-dimensional subspaces $V \subseteq \CC^n$ with $F_{k-2}\subseteq V\subseteq F_{k+2}$}\} \subseteq \text{Gr}(k,n),
\]
 and the pushforward map of the inclusion sends $\eta(a_1,a_2)$ to $\eta(a_1,a_2)$.

We construct an explicit irreducible surface $Y$ in $\text{Gr}(2, 4)$ whose homology class is $\eta(a_1,a_2)$ for given positive integers $a_1$, $a_2$. 
By Grassmannian duality, we may assume that $a_1\geq a_2$. Consider the subspace
\[
V(x,y)\coloneq \text{span}\Big(\mathbf{e}_1+x^{a_1} \mathbf{e}_3+x^{a_2}\mathbf{e}_4,\mathbf{e}_2+y\mathbf{e}_3+x\mathbf{e}_4 \Big) \subseteq \CC^4,
\]
and let $Y$ be the closure in $\text{Gr}(2, 4)$ of the subset
\[
\Big\{ V(x,y), \  x, y\in \CC
\Big\} \subseteq \text{Gr}(2, 4).
\]
By Poincar\'e duality, it is enough to show that $[T_1] \cdot [Y]=a_1$ and $[T_2] \cdot [Y]=a_2$.

We compute the intersection of $Y$ with $T_1$. This amounts to
counting the number of subspaces $V(x,y)$ that are perpendicular to a given general vector $(c_1,c_2,c_3,c_4)$, that is, counting the number of solutions of the system of equations
\[
c_1+c_3x^{a_1}+c_4x^{a_2}=0 \quad \text{and} \quad c_2+c_3y+c_4x=0.
\]
Since $a_1 \ge a_2>0$, the first equation has $a_1$ solutions, and the second equation uniquely determines $y$ given $x$.
 This gives $[T_1] \cdot [Y]=a_1$.

We compute the intersection product of $Y$ with $T_2$. 
This amounts to counting the number of subspaces $V(x,y)$ that contain a given general vector $(c_1,c_2,c_3,c_4)$, that is, counting the number of solutions of the system of equations
\[
    c_1x^{a_1}+c_2y=c_3 \quad \text{and} \quad 
    c_1x^{a_2}+c_2x=c_4.
\]  
Since $a_2>0$, the second equation has $a_2$ solutions,
 and the first equation uniquely determines $y$ given $x$.
 This gives $[T_2] \cdot [Y]=a_2$.
\end{proof}

\bibliography{biblio}
\bibliographystyle{amsalpha}
\end{document}